\newtheorem{theorem}{Theorem}[section]
\newtheorem{proposition}{Proposition}[section]
\newtheorem{lemma}{Lemma}[section]
\def\R{\mathbb{R}}
\def \p{\partial}
\def \e {\varepsilon}
\def \I {\mathcal I}
\def \J {\mathcal J}
\def \a {\alpha}
\def \k {\kappa}
\def \d {\delta}
\def \v {\varphi}
\def \b { \beta}
\def \l {\tilde{\kappa}}
\def \g {\gamma}
\def\ep{\varepsilon}
\def\eps{\varepsilon}
\def \dd{\sqrt{\delta}}
\begin{document}
\title{One dimensional phase transition problem modelling  striped spin orbit coupled Bose-Einstein condensates}
\author{Amandine Aftalion\footnote{Ecole des Hautes Etudes en Sciences Sociales, PSL Research University, CNRS UMR 8557, Centre d'Analyse et de Math\'ematique Sociales, 54 Boulevard Raspail, 75006 Paris, France}
\and  R\'emy Rodiac\footnote{Universit\'e Catholique de Louvain, Institut de Recherche en Math\'ematique et Physique, Chemin du Cyclotron 2 bte L7.01.01, 1348 Louvain-la-Neuve, Belgium}}
\date{\today}

\maketitle
\begin{abstract} We study the behaviour of a Modica-Mortola phase transition type problem with a non-homogeneous Neumann boundary condition. According to the parameters of the problem, this leads  to the existence of either one component occupying most of the condensate with an outer boundary layer containing the other component, or to many interfaces, on a periodic pattern. This is related to the striped behaviour of a two component Bose-Einstein condensate with spin orbit coupling in one dimension. We prove that minimizers of the full Gross-Pitaevskii energy in 1D converge, in the Thomas-Fermi limit of strong intra-component interaction, to those of the simplified Modica-Mortola problem we have studied in the first part.
\end{abstract}

\section{Introduction}
The aim of this paper is to study a one dimensional  functional which models vortex stripes in two component condensates, namely
\begin{eqnarray}\label{Energyintro}
G_{\e,\d,\k}(v,\varphi)&=&\frac{1}{2}\int_{0}^1v'(x)^2\ dx+\frac{1}{4\e^2}\int_{0}^1(1-v^2(x))^2\ dx
+ \frac{1}{8}\int_{0}^1v^2(x)\varphi'(x)^2\ dx \nonumber \\  &+&\frac{\d}{8\e^2}\int_{0}^1v^4(x)\sin^2\v(x)\ dx -\frac{\k}{2}\int_{0}^1v^2(x)\varphi'(x)\ dx.
\end{eqnarray} The function $v^2$ describes  the total density of the two components, while the value of $\sin \varphi$ allows to discriminate between component 1 and component 2.
Here $\e$ is a small positive parameter which describes the interactions inside each component, $\d$ is a positive parameter  which describes the interactions between the two components and $\k$ is the spin orbit modulation which we assume to be positive. We impose  the constraint $\int_{0}^1v^2(x)dx =1$, which means that the total mass is prescribed, and set $\varphi (0)=0$.

We are going to study the limit of minimizers as $\ep$ tends to 0 according to the values of $\delta$ and $\kappa$. We will always work in a regime where $v^2$ tends to 1, and we will prove that this occurs if $\k$ is bounded, or if $\k$ blows up like $\sqrt\delta/\e$, and $\delta$ goes to zero as $\e$ tends to zero. Under this hypothesis, we will show that the behaviour of $\varphi$ depends on $\kappa$ and a reduced parameter
 \begin{equation}\label{defbeta}
 \b=\frac{ \e}{\sqrt{\d}},
 \end{equation}
and  is determined by the auxiliary problem
\begin{equation}\label{eqF}
 F_{\b, \k}(\v):=G_{\e,\d,\k}(1,\v)= \frac{1}{8} \int_0^1 \left(\v'(x)^2 +\frac 1 {\b^2} \sin^2 \v(x)\right)\ dx -\frac{\k}{2} \int_0^1 \v'(x)\ dx
 \end{equation} with $\varphi (0)=0$. We will focus on the case where $\b$ is small.
 The study of the energy $F_{\b, \k}$ is a Modica-Mortola type problem \cite{Modica1987} except for the non-homogeneous Neumann boundary condition at 1 which comes out in the Euler-Lagrange equations due to the term $\frac{\k}{2} \int_0^1 \v'(x)\ dx$. This Neumann boundary condition can lead to oscillations in $\varphi$ and thus stripes in the original problem. Namely,
 for $\k\beta<1/\pi$, that is $\kappa \ep/\sqrt{\delta} <1/\pi$, we will show that $\varphi$ converges exponentially fast to 0, except at the point 1, which means that the domain is almost nearly occupied by the same component;  if $\k\beta>1/\pi$, that is $\kappa \ep/\sqrt{\delta} >1/\pi$ (with $\kappa \ep/\sqrt{\delta}$ independent of $\e$), then $\varphi$ goes from 0 to $N\pi$ where $N$ is large, with a quasi periodic behaviour corresponding to many stripes. For $\kappa \ep/\sqrt{\delta} $ close to $1/\pi$, $\varphi$ stays between 0 and $\pi$, corresponding to the usual phase transition solution.

The paper is dedicated to the study of $F_{\b, \k}$ and the convergence of minimizers of $G_{\e,\d,\k}(v,\varphi)$ as $\ep$ tends to 0.
\subsection{Physical and mathematical motivation}
 Our motivation stems from  the new physics emerging in spin orbit coupled Bose-Einstein condensates, and in particular the existence of vortex stripes \cite{AMspin,hu2012,MLS,sinha,WGJZ,zhai}. Bose-Einstein condensates are quantum gases described by a complex valued wave function whose modulus is the density of atoms and whose phase is related to the singularities. Two component condensates are described by two wave functions and correspond to
   a single isotope in two different
hyperfine spin states, two different isotopes of the same atom or
isotopes of two different atoms.  According to the respective values of the inter-component and intra-component interactions, the minimizers exhibit very different properties in terms
 of shape of the bulk, defects and coexistence of the components or spatial separation \cite{MArot}. It turns out that the sign of the parameter
  $\delta$  plays an important  role: if $\delta<0$, the two components coexist
   while if $\delta>0$, they separate or segregate. The segregation behavior in two component condensates has been  studied by many authors: regularity of the wave function \cite{NoTaTeVe}, regularity of the interface \cite{CaffLin2}, asymptotic behavior near the interface \cite{AfSo,ABCP,BeLinWeiZhao,BeTer,dwz,Sourdis2016}, $\Gamma$-convergence to a perimeter type functional \cite{AftalionLetelier2015,GM,GoldmanLetelier2015} in the case of a trapped condensate. The coexisting case has been analyzed in \cite{ANS,Ga}.

 The interplay between the spin orbit coupling and the interaction parameter leads to very different and new patterns \cite{AMspin,hu2012,zhai}. In the case of strong repulsive interaction, the spin orbit coupled BEC minimizes the energy by spontaneously breaking the rotational symmetry, developing a spin modulation in an arbitrary direction, leading to one dimensional stripes \cite{MLS,sinha,WGJZ}, which is the main interest of our paper.

 When a two component condensate is spin orbit coupled, it is described by two wave functions $u_1$ and $u_2$ minimizing an energy depending on 3 parameters: $\kappa$ the spin modulation, $\delta$ which measures the interaction between the components, and $\eps$ a small parameter related to the intra-component self interaction. Instead of considering the harmonic trapping potential as in the experiments, we can assume that the system  is localized in a bounded two-dimensional domain $D$. Then the energy is:
 \begin{eqnarray}
E(u_1,u_2)&=& \frac{1}{2}\int_D |\p_xu_1+\kappa u_2|^2 +|\p_yu_1+i\kappa u_2|^2
 + |\p_x u_1-i\kappa u_1|^2+|\p_y u_2+\kappa u_1|^2 \nonumber \\
 &+ &\frac{1}{4\e^2}\int_D(|u_1|^2+|u_2|^2-1)^2+\frac{\delta}{2\e^2}\int_D |u_1|^2|u_2|^2
\label{enerful}\end{eqnarray} under the condition $\int_D |u_1|^2+|u_2|^2=1$. This energy is the same as in \cite{AMspin} up to a constant. When $\kappa=0$, the ground state depends on the sign of $\delta$. The case $\delta>0$ is known as the segregation case and the limiting behaviour is the minimal perimeter of the interface between the two components. In the case of fixed prescribed mass for each component, the limiting problem depends on $\delta$, either tending to 0, $+\infty$ or fixed, \cite{AftalionLetelier2015,GM,GoldmanLetelier2015}. More precisely, when $\delta$ tends to 0 then $v^2=u_1^2+u_2^2$ tends to 1 everywhere \cite{GM}. In the case of strong segregation ($\delta\to\infty$), then $v$ tends to 0 at the interface leading to a sharp interface \cite{AftalionLetelier2015}. On the other hand, if $\delta$ is fixed, then $v$ stays between zero and 1 \cite{GoldmanLetelier2015}. We will see that when the  $\k$ term is added to the problem, then for bounded $\k$ at least, $v$ tends to 1 strongly even at the interface.

 Since in this paper we  assume that the sum of the $L^2$ norms is fixed, instead of prescribed $L^2$ norm in each component, then the optimal solution for $\k=0$ is to have only one component with all the mass, that is no interface. When $\kappa$ is added to the problem and we impose a condition on the sum of the masses, the behaviour changes. It becomes energetically favorable to have an interface. For low $\kappa$, the numerical simulations \cite{AMspin} indicate that radial symmetry is preserved and the interfaces are circles, that is the components are made up of concentric annuli with a central disk, while for large $\kappa$, there is a breaking of symmetry leading to one dimensional stripes.
 In \cite{WGJZ}, this is described as standing waves, that is the two wave functions are in $\cos$ and $\sin$. Therefore, in this paper, as a first understanding of this phenomenon we reduce the energy (\ref{enerful}) to a one dimensional energy. More precisely we take $D=[0,1]$,  $u_1$ and $u_2$ real valued and we set $v^2=u_1^2+u_2^2$, $u_1= v \cos \varphi /2$, $u_2= v\sin \varphi /2$. We point out that $\varphi=0$ corresponds to component 1, while $\varphi=\pi/2$ to component 2.
  This change of functions turns the energy (\ref{enerful}) into (\ref{Energyintro}), up to the addition of a constant term.

  If $\k$ is fixed, we will show that $v^2$ tends to 1 and the domain is almost occupied by component 1, except on a thin layer at the interface. The fact that component 1 is privileged is due to our specific choice of boundary condition $\varphi (0)=0$. If $\k$ is of order $\sqrt\delta/\e$, and $\delta$ tends to 0, then we will show that $v^2$ tends to 1 everywhere, and the number of transitions (or pieces of each component) depends on an auxiliary problem for $\varphi$. If $\delta$ does not tend to zero, then $v$ does not tend to 1 at the interface, and the effect of the spin orbit takes place for even larger $\kappa$ and remains an open question, since the problem is no longer decoupled between $v$ and $\varphi$.
 \subsection{Main results}
For $v$ in $H^1((0,1))$ we set
\begin{equation}\label{Singularset}
\mathcal{S}(v):=\{x\in [0,1]; v(x)=0\}
\end{equation}
and
\begin{equation}
\I:=\{(v,\v)\in H^1((0,1))\times \left( H^1((0,1))\setminus \mathcal{S}(v)\right);  \ \v(0)=0 \text{ and } \int_{0}^1v^2(x)dx =1\}.
\end{equation}
We want to minimize in the space $\mathcal{I}$ the functional  $G_{\e,\d,\k}$ defined in \eqref{Energyintro}. We set $v^2\v'^2=0$, $v^2\v'=0$ and $v^4\sin^2\varphi=0$ on $\mathcal{S}(v)$. The main results obtained for $G_{\e,\d,\k}$ will be deduced from the study of an auxiliary problem which is also of independent interest. We recall that $\b$ is given by \eqref{defbeta} and we study the functional $F_{\b, \k}$ given by \eqref{eqF} defined in
\begin{equation}\label{defJ}
\mathcal{J}=\{ \v \in H^1((0,1)); \v(0)=0 \}.
\end{equation}
We also set
\begin{equation}\label{tildekappa}
\tilde{\k}:= \k \b
\end{equation} when necessary in order to study the case of  unbounded $\k$ as $\b$ goes to zero, that is $\k$ can depend on $\b$, though we do not write explicitly the dependence.  We want to find an expansion of the energy of a minimizer of $F_{\b,\k}$ and to describe the asymptotic behavior of these minimizers as $\b$ goes to zero. Note that this problem is a Modica-Mortola type problem with non-homogeneous Neumann boundary condition. The asymptotic behavior of the minimizers depends strongly on this Neumann condition, that is on the value of the parameter $\k$ which we allow to depend on $\b$. More precisely, we prove:
\begin{theorem}\label{caskinf1/pi}
Let $\k < \frac{1}{\pi \b}$, and $\l=\k \b$. There exists a unique minimizer $\v_\b$ of $F_{\b,\k}$ in $\mathcal{J}$.
 Moreover, $0<\v_\b(x)<\frac{\pi}{2}$ for all $x$ in $(0,1)$, $\v_\b \rightarrow 0$ in $\mathcal{C}^\infty(K)$ as $\b \rightarrow 0$ for every compact set $K \subset [0,1)$ and
\begin{equation}\label{eqdecroissanceexp}
\v_\b(x) < 2 \arctan \left[ (\tan \frac{\v_\b(1)}{2})e^{\frac{x-1}{\b}} \right].
\end{equation}
\begin{itemize}
\item[i)] If $\k \b=\l$ is independent of $\b$, then for $\b>0$ small enough, $\v_\b(1)=\arcsin(2\l)+o_\b(1)$
 and
\begin{equation}\label{DL11}
F_{\b,\k}(\v_\b)= \frac{1-\sqrt{1-4\l^2}}{4\b}-\frac{\l}{2\b}\arcsin(2\l)+o_\b(\b^n), \forall \ n \in \mathbb{N}^*.
\end{equation}Besides, if we set $\psi_\b(x):=\v_\b(1-\b x)$, then  $\psi_\b \rightarrow \psi_0$ in $\mathcal{C}^\infty_{\text{loc}}([0,+\infty))$ where $\psi_0(x)=2\arctan \left[ \tan \left(\frac{\arcsin(2\l)}{2} \right)e^{-x}\right]$.
\item[ii)] If $\k\b =o_\b(1)$ then $\v_\b(1)=2\k \b(1+o_\b(1))$ and
\begin{equation}\label{DL22}
F_{\b,\k}(\v_\b)=\frac{-\k^2 \b}{2}(1+o_\b(1)).
\end{equation}
Moreover, if we set $\Phi_\b(x):=\frac{\v_\b(1-\b x)}{2\k\b}$, then $\Phi_\b \rightarrow e^{-x}$ in $\mathcal{C}^\infty_{\text{loc}}([0,+\infty))$.
\end{itemize}
\end{theorem}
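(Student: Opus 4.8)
The plan is to proceed by the direct method, extract the Euler--Lagrange equation, exploit its first integral to obtain a complete phase-plane description, and finally run a boundary-layer analysis near $x=1$.

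First, since $\v(0)=0$ we have $-\frac{\k}{2}\int_0^1\v'=-\frac{\k}{2}\v(1)$, and by Cauchy--Schwarz $\v(1)\le(\int_0^1\v'^2)^{1/2}$, so $F_{\b,\k}(\v)\ge\frac18\int_0^1\v'^2-\frac{\k}{2}(\int_0^1\v'^2)^{1/2}\ge-\k^2/2$; thus $F_{\b,\k}$ is bounded below and, via Poincaré (recall $\v(0)=0$), coercive on $\J$. The gradient term is convex hence weakly lower semicontinuous, while the potential term and the trace $\v(1)$ are continuous for the compact embedding $H^1((0,1))\hookrightarrow C^0([0,1])$, so the direct method yields a minimizer. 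Writing the first variation with $\eta(0)=0$ and $\eta(1)$ free gives the interior equation $\v''=\frac1{2\b^2}\sin 2\v$ together with the natural Neumann condition $\v'(1)=2\k$. A truncation argument ($\v\mapsto\v^+$ lowers the gradient and potential terms and raises $\v(1)$) shows any minimizer satisfies $\v\ge0$, hence $\v'(0)\ge0$; if $\v'(0)=0$ then $\v\equiv0$, contradicting $\v'(1)=2\k>0$, so $c:=\v'(0)>0$.

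Multiplying the equation by $\v'$ shows $H:=\frac12\v'^2-\frac1{2\b^2}\sin^2\v\equiv\frac12 c^2$ is constant, so $\v'^2=c^2+\b^{-2}\sin^2\v\ge c^2>0$ and $\v$ is strictly increasing with maximum $m:=\v(1)$. Evaluating at $x=1$ gives $4\k^2=c^2+\b^{-2}\sin^2 m$, hence $\sin m\le 2\k\b=2\l<2/\pi<1$. To upgrade this to $m<\pi/2$ I use $\k<1/(\pi\b)$ through an energy comparison: along the family $\v_c$ solving the initial value problem $\v_c(0)=0,\ \v_c'(0)=c$, the reduced energy $\mathcal F(c):=F_{\b,\k}(\v_c)$ satisfies $\mathcal F'(c)=\tfrac14\big(\partial_c\v_c(1)\big)\big(\v_c'(1)-2\k\big)$ with $\partial_c\v_c(1)>0$, so critical points are exactly the solutions of the boundary value problem and $\mathcal F$ decreases while $\v_c'(1)<2\k$. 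Reaching $m=\pi/2$ forces $\v_c'(1)=\sqrt{c^2+\b^{-2}}\ge\b^{-1}>2\k$, so the first crossing of the level $2\k$ occurs while $m<\pi/2$ and is a local minimum; the AM--GM bound $\int_0^1(\v'^2+\b^{-2}\sin^2\v)\ge\frac2\b(1-\cos m)$ combined with $\k<1/(\pi\b)$ shows that each additional full kink (an increase of $m$ by $\pi$) changes the energy by $\frac1{2\b}-\frac{\k\pi}2+o(1/\b)>0$, so this first critical point is the global minimum. This simultaneously gives the confinement $0<\v<\pi/2$ and, since $c\mapsto\v_c'(1)$ is strictly increasing on the confined range $m<\pi/2$, uniqueness. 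For the pointwise bound let $w(x):=2\arctan[\tan(\tfrac m2)e^{(x-1)/\b}]$ be the heteroclinic with $w(1)=m$, $w'=\frac1\b\sin w$, which solves the same second-order equation with first integral $H_w=0<H_\v$; since $\v(1)=w(1)$ and $\v'(1)=2\k=\sqrt{c^2+\b^{-2}\sin^2 m}>\frac1\b\sin m=w'(1)$, and at any common value $\v$ is the steeper solution, one gets $\v<w$ on $[0,1)$, which is exactly \eqref{eqdecroissanceexp}. As $w(x)\to0$ for each fixed $x<1$ as $\b\to0$, squeezing $0<\v<w$ gives $\v\to0$ uniformly on compacts of $[0,1)$, and differentiating the ODE and bootstrapping promotes this to $\mathcal C^\infty(K)$ convergence.

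For the fine behaviour I rescale: with $\psi_\b(x):=\v_\b(1-\b x)$ the equation becomes $\psi_\b''=\frac12\sin 2\psi_\b$, $\psi_\b(0)=m$, $\psi_\b'(0)=-2\l$, with first integral $\frac12\psi_\b'^2-\frac12\sin^2\psi_\b=\frac12(\b c)^2$. In case (i), $\l$ fixed, the matching condition $\psi_\b\to0$ at $+\infty$ (inherited from the outer smallness of $\v$) forces the limit first integral to vanish, so $\b c\to0$, $\sin m\to 2\l$, i.e.\ $m\to\arcsin(2\l)$, and $\psi_\b\to\psi_0$ in $\mathcal C^\infty_{\mathrm{loc}}$ with $\psi_0$ the stated heteroclinic; using the first integral, $\int_0^1(\v'^2+\b^{-2}\sin^2\v)=\int_0^m\frac{c^2+2\b^{-2}\sin^2\v}{\sqrt{c^2+\b^{-2}\sin^2\v}}\,d\v\to\frac2\b(1-\cos m)$, which with $\cos m=\sqrt{1-4\l^2}$ and $\k=\l/\b$ gives \eqref{DL11}, the corrections being governed by $c=O(\b^{-1}e^{-1/\b})$ and hence $o_\b(\b^n)$ for every $n$. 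In case (ii), $\l=\k\b\to0$, one has $m\to0$, the equation linearizes to $\v''=\b^{-2}\v$, whose solution with $\v(0)=0,\ \v'(1)=2\k$ is $\v=2\k\b\,\sinh(x/\b)/\cosh(1/\b)$, giving $m=2\k\b\tanh(1/\b)=2\k\b(1+o_\b(1))$ and $\Phi_\b(x)=\sinh((1-\b x)/\b)/\cosh(1/\b)\to e^{-x}$; multiplying the equation by $\v$ and integrating yields $\int_0^1(\v'^2+\b^{-2}\sin^2\v)=2\k m(1+o_\b(1))$, so $F_{\b,\k}(\v_\b)=-\frac{\k m}4(1+o_\b(1))=-\frac{\k^2\b}2(1+o_\b(1))$, which is \eqref{DL22}. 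The main obstacle is the confinement/global-minimality step of the second paragraph: ruling out minimizers that perform one or more full kinks requires the sharp per-kink increment $\frac1{2\b}-\frac{\k\pi}2$ and is precisely where the threshold $\k<1/(\pi\b)$ enters, the remaining steps being either standard (existence) or reducing to the first integral and a routine matched-asymptotics computation.
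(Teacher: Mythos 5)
Your proposal reaches the stated conclusions by a route that is genuinely different from the paper's. The paper proves everything by energy comparison: a Modica--Mortola lower bound organised around the function $f$ of \eqref{deff}, matched by an explicit test-function upper bound, from which $N=0$, the value $\v_\b(1)$, the expansion \eqref{DL11} and finally the blow-up profiles are successively extracted, with uniqueness as a separate shooting argument. You instead organise the whole proof around the shooting family $\v_c$ and the first integral \eqref{integrale1eree}: the Neumann condition gives $\sin^2\v_\b(1)=4\l^2-\b^2\v_\b'(0)^2$, the derivative formula $\frac{d}{dc}F_{\b,\k}(\v_c)=\frac14\,\partial_c\v_c(1)\,\bigl(\v_c'(1)-2\k\bigr)$ replaces the test-function construction, the decay estimate \eqref{eqdecroissanceexp} (your slope comparison is the same computation as the paper's integration of $\v'/\sin\v>1/\b$) makes $\v_\b'(0)$ exponentially small, and \eqref{DL11} then follows by evaluating the energy on the first integral. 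This buys something real: no test functions at all, and the identification of $\v_\b(1)$ and the uniqueness come out of a single monotonicity statement rather than two separate propositions. The threshold enters both proofs identically, as the per-period cost $\frac1{2\b}(1-\l\pi)>0$.

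Two steps, however, are not proofs as written. (a) The confinement/global-minimality paragraph uses $\partial_c\v_c(1)>0$ without proof; it is true, because by the first integral the time $\int_0^\theta dy/\sqrt{c^2+\b^{-2}\sin^2 y}$ needed to reach any height $\theta$ is strictly decreasing in $c$, but it does not follow from the linearized equation, whose potential $\b^{-2}\cos 2\v_c$ changes sign. Moreover the sentence claiming that the per-kink increment $\frac1{2\b}-\frac{\k\pi}2>0$ makes the first critical point the global minimum hides the real accounting: one must note that a global minimizer of $F_{\b,\k}$ is automatically a global minimum of $c\mapsto F_{\b,\k}(\v_c)$, and one must separately exclude critical points whose endpoint lies in $[k\pi+\pi-\arcsin(2\l),(k+1)\pi]$ (a partial extra kink), which the full-kink increment does not see; for those one needs $f\ge f(\pi)=\frac{1-\l\pi}2>0>f(\arcsin(2\l))$ on that range, i.e.\ precisely the study of $f$ that the paper carries out. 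This is fillable, but not filled. (b) More seriously, case (ii) is a formal linearization: you replace the minimizer by the solution $2\k\b\sinh(x/\b)/\cosh(1/\b)$ of the linearized boundary-value problem with no control of the linearization error, and you read both $\v_\b(1)=2\k\b(1+o_\b(1))$ and $\Phi_\b\to e^{-x}$ off this explicit formula; as written this proves neither statement. The first is repairable with tools you already have: $\sin^2\v_\b(1)=4\l^2-\b^2\v_\b'(0)^2$ together with the decay bound gives $\b\v_\b'(0)=o_\b(\l)$, hence the asymptotics of $\v_\b(1)$, and then your identity obtained by multiplying the equation by $\v$ and integrating yields \eqref{DL22}. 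But the $\mathcal{C}^\infty_{\text{loc}}$ convergence of $\Phi_\b$ for the true minimizer needs an actual compactness-and-limit argument --- an $H^1_{\text{loc}}$ bound on $\Phi_\b$ coming from the energy expansion, passage to the limit in the rescaled Euler--Lagrange equation, and uniqueness of the limit, which is what the paper does --- and this is absent from your proposal.
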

In this case, the Neumann boundary condition is too small to create a phase transition; component 1 occupies almost all the condensate, and $\v_\b$ goes to 0. The size of the boundary layer is of order $\beta$. It is only for $\tilde \kappa$ of order 1 that $\v_\b$ reaches a non zero value at 1 in the limit. The proof relies on the classical Modica-Mortola technique using the solution of $\psi'=\sin \psi$, with the Neumann boundary condition at 1 $\psi'(1)=2\kappa$, and the value at 1 which comes from the minimization of the energy, that is $\psi(1)=\arcsin (2\l)$. This solution  is exactly the function $\psi_0$ of the Theorem.
%

When $\k>\frac{1}{\pi \b}$, if $\l=\k \b$ is independent of $\b$ we observe a complete change of shape of a minimizer of $F_{\b,\k}$ in $\mathcal{J}$. Indeed such a minimizer satisfies that $\v_\b(1) \geq N \pi$ where $N$ is an integer of order $\frac{1}{\b}$. This means that there are many interfaces and all  phase transitions are of the same size because of the periodicity of the solution. More precisely, we have:
\begin{theorem}\label{casksup1/pi}
Let $\k >\frac{1}{\pi \b}$ and $\l= \k \b$ be independent of $\b$. Let $\v_\b$ be a minimizer of $F_{\b,\k}$ in $\mathcal{J}$. There exists a unique $T$ in $(0,1)$ such that $\v_\b(T)=\pi$. Besides,  $\v_\b$ is quasi-periodic in the following sense:
\begin{equation}\label{periodicityty}
\v_\b(x+T)=\pi+\v_\b(x)  \text{ for every } x \text{ in } [0,1-T].
\end{equation}
We set $N:= E(\frac{1}{T})$. There exist $0<c<C$, independent of $\b$, such that $\frac{c}{\b}\leq N \leq \frac{C}{\b}$. Furthermore, there exists a unique ${\tilde{\a}}_0$, with $0< {\tilde{\a}}_0 \leq 2\l$, defined by
\begin{equation}\label{defalpha0}
\int_0^\frac{\pi}{2} \sqrt{{\tilde{\a}}_0^2+\sin^2y}\ dy= \l \pi,
\end{equation} and we have
\begin{equation}\label{DLenergie2}
F_{\b,\k}(\v_\b)= \frac{-{\tilde{\a}}_0^2}{8\b^2}+O_\b\left(\frac{1}{\b}\right).
\end{equation} Let $\tilde{\v}_\b(x):= \v_\b(\b x)$ defined in $[0,\frac{1}{\b}]$.  Then, $\tilde{\v}_\b$ converges in $C^\infty(K)$ for every $K \subset [0,+\infty)$ to $\v_0$ the solution of
    \begin{equation}\label{eqlimitk>1/pi}
    \left\{
    \begin{array}{lcll}
    \v_0''&=&\sin \v_0 \cos \v_0 \text{ in } \R^+, \\
    \v_0(0)&=&0, \\
    \v_0'(0)&=& {\tilde{\a}}_0,
    \end{array}
    \right.
    \end{equation}
with ${\tilde{\a}}_0$ defined by \eqref{defalpha0}.
\end{theorem}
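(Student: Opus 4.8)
The plan is to reduce the variational problem to the autonomous ODE coming from the Euler--Lagrange equations and then to a one-parameter optimisation. First I would take the first variation of $F_{\b,\k}$ on $\J$: for perturbations vanishing at $0$ this yields the interior equation $\v_\b''=\frac{1}{\b^2}\sin\v_\b\cos\v_\b$ together with the natural (non-homogeneous Neumann) condition $\v_\b'(1)=2\k$, while $\v_\b(0)=0$ is imposed. Multiplying the equation by $\v_\b'$ produces the first integral $c:=\v_\b'^2-\frac{1}{\b^2}\sin^2\v_\b$, constant on $(0,1)$. Evaluating $c$ at $x=0$ and using $\v_\b(0)=0$ gives $c=\v_\b'(0)^2\ge 0$; a nontrivial minimiser has negative energy (the competitor $\v\equiv 0$ has energy $0$), so $\v_\b'(0)\neq 0$ and $c>0$. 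Hence $\v_\b'$ never vanishes, $\v_\b$ is strictly monotone, and $\v_\b'(1)=2\k>0$ forces it to be increasing. Setting $\a:=\b\,\v_\b'(0)>0$, the first integral reads $\v_\b'=\frac{1}{\b}\sqrt{\sin^2\v_\b+\a^2}$, and evaluating $c$ at $x=1$ gives $\a^2+\sin^2\v_\b(1)=4\l^2$; in particular $0<\a\le 2\l$, an a priori upper bound.

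Quasi-periodicity then follows from uniqueness for the Cauchy problem. Letting $T$ be the (unique, by monotonicity) point with $\v_\b(T)=\pi$, the function $x\mapsto \v_\b(x+T)-\pi$ solves the same equation, since the nonlinearity is $\pi$-periodic, with the same Cauchy data at $0$: its value is $\v_\b(T)-\pi=0$ and, by the first integral evaluated at $\pi$, its derivative equals $\v_\b'(0)$. Uniqueness gives $\v_\b(x+T)=\pi+\v_\b(x)$ on $[0,1-T]$. The existence of such $T\in(0,1)$ (that is $\v_\b(1)>\pi$) comes from the counting below. Separating variables, one period has rescaled length $\hat T(\a):=\int_0^\pi \frac{d\v}{\sqrt{\sin^2\v+\a^2}}$, so $T=\b\,\hat T(\a)$; once $\a$ is bounded in a compact subset of $(0,+\infty)$, $\hat T(\a)$ lies between two positive constants and $N=E(1/T)$ satisfies $c/\b\le N\le C/\b$.

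The heart of the proof is the energy expansion. Using the first integral to write $dx=\b\,d\v/\sqrt{\sin^2\v+\a^2}$, a direct computation turns $F_{\b,\k}(\v_\b)$ into the closed form $-\frac{\a^2}{8\b^2}+\frac{1}{4\b}\int_0^{\Phi}\sqrt{\sin^2\v+\a^2}\,d\v-\frac{\l\Phi}{2\b}$, where $\Phi:=\v_\b(1)$ and the length constraint $\int_0^{\Phi}\frac{d\v}{\sqrt{\sin^2\v+\a^2}}=1/\b$ holds. Writing $\Phi=N\pi+r$ with $r\in[0,\pi)$ and absorbing the incomplete last period into an $O(1/\b)$ term, together with $N\hat T(\a)=1/\b+O(1)$, this becomes $F_{\b,\k}(\v_\b)=g(\a)/\b^2+O(1/\b)$, where $g(\a):=-\frac{\a^2}{8}+\frac{1}{\hat T(\a)}\bigl(\frac{A(\a)}{4}-\frac{\l\pi}{2}\bigr)$ and $A(\a):=\int_0^\pi\sqrt{\sin^2\v+\a^2}\,d\v$. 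The key identity is $A'(\a)=\a\,\hat T(\a)$, which makes the two $\a$-linear contributions in $g'(\a)$ cancel, leaving $g'(\a)=-\frac{\hat T'(\a)}{\hat T(\a)^2}\bigl(\frac{A(\a)}{4}-\frac{\l\pi}{2}\bigr)$. Since $A$ is strictly increasing with $A(0)=2<2\l\pi$ (this is exactly where $\l>1/\pi$ enters) and $A(\a)\to+\infty$, the bracket vanishes at a unique $\tilde{\a}_0$ characterised by \eqref{defalpha0}, which moreover satisfies $\tilde{\a}_0\le 2\l$; because $\hat T'<0$, the sign of $g'$ changes from $-$ to $+$ there, so $\tilde{\a}_0$ is the unique global minimiser of the coercive function $g$, with $g(\tilde{\a}_0)=-\tilde{\a}_0^2/8$.

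To conclude I would match an upper and a lower bound. For the upper bound I construct an explicit admissible solution with $N\approx 1/(\b\,\hat T(\tilde{\a}_0))$ complete periods, for which the constraints force its parameter within $O(\b)$ of $\tilde{\a}_0$, giving $F_{\b,\k}\le -\tilde{\a}_0^2/(8\b^2)+O(1/\b)<0$ for small $\b$. Feeding this negativity into the exact energy formula rules out $\a_\b\to 0$ (a small $\a_\b$ forces, via $\sqrt{\sin^2\v+\a^2}\ge|\sin\v|$ and $\l>1/\pi$, an energy that is not negative enough), so $\a_\b$ stays in a compact subset of $(0,+\infty)$, legitimising the reduction and the bounds on $N$. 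For the lower bound, $F_{\b,\k}(\v_\b)=g(\a_\b)/\b^2+O(1/\b)\ge g(\tilde{\a}_0)/\b^2+O(1/\b)$; matching the two yields \eqref{DLenergie2} and, since $g$ has a unique strict minimum, $\a_\b\to\tilde{\a}_0$. Finally, $\tilde{\v}_\b(x)=\v_\b(\b x)$ solves $\tilde{\v}_\b''=\sin\tilde{\v}_\b\cos\tilde{\v}_\b$ with $\tilde{\v}_\b(0)=0$ and $\tilde{\v}_\b'(0)=\a_\b\to\tilde{\a}_0$, so continuous dependence on initial data together with smoothness of the flow gives $\tilde{\v}_\b\to\v_0$ in $\mathcal{C}^\infty_{\text{loc}}([0,+\infty))$, where $\v_0$ solves \eqref{eqlimitk>1/pi}. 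The main obstacle is precisely this last step: controlling the $O(1/\b)$ remainder uniformly --- the discreteness of the integer $N$ and the incomplete boundary period --- while simultaneously bootstrapping the a priori lower bound on $\a_\b$ on which the whole reduction to minimising $g$ rests.
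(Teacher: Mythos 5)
Your overall architecture is the paper's: the first integral with rescaled slope $\a=\b\v_\b'(0)\in(0,2\l]$, quasi-periodicity by Cauchy--Lipschitz, reduction of the energy to a one-variable function of the slope, and matching bounds around its unique minimiser. Your $g$ is exactly the function $h$ of \eqref{defh2}, and your identity $A'(\a)=\a\hat{T}(\a)$, with $A(\a)=\int_0^\pi\sqrt{\sin^2y+\a^2}\,dy$ and $\hat{T}(\a)=\int_0^\pi\frac{dy}{\sqrt{\sin^2y+\a^2}}$, is the same cancellation carried out when computing $h'$ in the proof of Proposition \ref{propcask>1surpi}. Two of your choices genuinely streamline the argument: since minimizers solve the ODE, your Modica--Mortola step is an exact identity rather than an inequality (the paper instead bounds each half-period from below by the auxiliary minimum $e$ of \eqref{defe} and needs Lemma \ref{enegative}), and you obtain the $\mathcal{C}^\infty_{\text{loc}}$ convergence of $\tilde{\v}_\b$ from smooth dependence on the Cauchy data $\a_\b\to{\tilde{\a}}_0$, where the paper goes through weak $H^1_{\text{loc}}$ compactness, distributional limits and a bootstrap.

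The step that would fail as written is your a priori exclusion of $\a_\b\to 0$, on which you say the whole reduction rests. Your proposed mechanism --- drop $\a$ via $\sqrt{\sin^2\v+\a^2}\ge|\sin\v|$ and invoke $\l>1/\pi$ --- yields, when $\a_\b\to 0$, only $F_{\b,\k}(\v_\b)\ge\frac{1}{\b^2}\left[\frac12\left(\frac1\pi-\l\right)-o_\b(1)\right]$ (using $\int_0^\Phi|\sin y|\,dy\ge\frac{2\Phi}{\pi}-2$ and $\Phi\le\sqrt{1+\a_\b^2}/\b$). This contradicts the upper bound $-\frac{{\tilde{\a}}_0^2}{8\b^2}+O_\b(1/\b)$ only if ${\tilde{\a}}_0^2>\frac{4}{\pi}(\l\pi-1)$. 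But by Lemma \ref{lemalpha0}, $\l\pi-1\sim\frac{{\tilde{\a}}_0^2}{2}\log\frac{1}{{\tilde{\a}}_0}$, so $\frac{4}{\pi}(\l\pi-1)\gg{\tilde{\a}}_0^2$ when ${\tilde{\a}}_0$ is small: the mechanism breaks down for every $\l$ in a whole right-neighbourhood of $\frac1\pi$ (roughly ${\tilde{\a}}_0\le e^{-\pi/2}$), i.e.\ on part of the range covered by the theorem.

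Fortunately the a priori bound is unnecessary, and this is how your plan can be repaired. Write the length constraint as $N\hat{T}(\a)=\frac1\b-\rho$ with $0\le\rho\le\hat{T}(\a)$. Your intermediate claim ``$N\hat{T}(\a)=1/\b+O(1)$'' is not uniform (the error is $O(\hat{T}(\a))$, and $\hat{T}(\a)\to\infty$ as $\a\to0$), but what enters the energy is $\frac{N}{\b}\left[\frac{A(\a)}{4}-\frac{\l\pi}{2}\right]=\frac{1}{\b^2\hat{T}(\a)}\left[\frac{A(\a)}{4}-\frac{\l\pi}{2}\right]-\frac{\rho}{\b\,\hat{T}(\a)}\left[\frac{A(\a)}{4}-\frac{\l\pi}{2}\right]$, and since $\rho/\hat{T}(\a)\le 1$ while the bracket is bounded on $(0,2\l]$, the remainder is $O(1/\b)$ \emph{uniformly} in $\a$. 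Hence $F_{\b,\k}(\v_\b)=g(\a_\b)/\b^2+O_\b(1/\b)$ holds for any minimizer completing at least one period, with no lower bound on $\a_\b$; moreover $g$ extends continuously to $[0,2\l]$ with $g(0)=0>-{\tilde{\a}}_0^2/8$ (because $\hat{T}(\a)\to\infty$ while the bracket stays bounded), so the comparison $g(\a_\b)\le g({\tilde{\a}}_0)+O_\b(\b)$ alone forces $\a_\b\to{\tilde{\a}}_0$; finally, the no-complete-period case gives $\a_\b\le\pi\b$ and therefore $F_{\b,\k}(\v_\b)\ge -O_\b(1/\b)$, which the negative upper bound excludes. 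Note that the paper sidesteps this circularity differently: it first proves $\v_\b(1)\ge\pi$ by explicit test functions (Lemma \ref{Superieurapi}) and then uses a lower bound, $F_{\b,\k}(\v_\b)\ge h(\tilde{\a})/\b^2-O_\b(1/\b)\ge h({\tilde{\a}}_0)/\b^2-O_\b(1/\b)$, that is valid for every slope.
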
 The number $N$ is called the number of periods and $\v_\b(NT)= N \pi$. The proof relies again on an upper bound and lower bound, but taking into account the periodic solution of (\ref{eqlimitk>1/pi}). We will also see that when $\l$ gets large, the solution $\varphi_0$ becomes almost linear. The limiting case $1/\pi$ will be analysed in Proposition \ref{prop1surpi}.

Once we have obtained these results about the auxiliary problem we use them to describe the original problem. Though we do not write down the $\e$ dependence as $\k_\e$ and $\d_\e$, we allow $\k$ and $\d$ to depend on $\e$. We need the hypothesis $\e^2=o_\e(\d)$ to ensure that $\b=\e/\sqrt \d$ is small. When additionally, $\k$ blows up like $1/\e$, we also need to assume that $\d$ is small to ensure that $v^2$ tends to 1.

\begin{theorem}\label{minimizerspbcomplet}
Let $(v_\e,\v_\e)$ be a minimizer of $G_{\e,\d,\k}$ in $\mathcal{I}$, then:
\begin{itemize}
\item[a)] For $\k$ bounded and $\d$ fixed, then for $\e>0$ small, there exists $C>0$, independent of all the parameters, such that \begin{equation}\label{93}
\|v_\e-1\|_{L^\infty((0,1))}=o_\e(\e),\end{equation}
\begin{equation}\label{15}
\v_\e(1)=2 \k {\frac{\e}{\sqrt{\d}}}(1+o_\e(1)), \ \| \v_\e\|_{L^\infty((0,1))}\leq C \k {\frac{\e}{\sqrt{\d}}} ,
\end{equation}\begin{equation}
G_{\e,\d,\k}(v_\e,\v_\e)=\frac{-\k^2 \e}{2\sqrt{\d}}(1+o_\e(\e)).
\end{equation}
If we set $\Phi_\e(x)=\frac{\dd}{2\k \e} \v_\e(1-\frac{\e x}{\dd})$ then $\Phi_\e \rightarrow e^{-x}$ in $\mathcal{C}^1_{\text{loc}}([0,+\infty))$.

\item[b)] For $\k< \frac{\sqrt{\d}}{\pi \e}$, we set $\k=\frac{\l \sqrt{\d}}{\e}$. If $\l$ is independent of $\e$, $\d=o_\e(\e)$ and $\e^2=o_\e(\d)$ then \begin{equation}\label{94}
\|1-v_\e\|_{L^\infty((0,1))}=o_\e(\d^{1/4}), \ \  \v_\e(1)=\arcsin(2\l)(1+o_\e(1)),
\end{equation}
    \begin{equation}\label{eqG}
    G_{\e,\d,\k}(v_\e,\v_\e)=\frac{\sqrt{\d}}{\e}\left( \frac{(1-\sqrt{1-4\l^2})}{4}-\frac{\l}{2}\arcsin(2\l)\right)(1+o_\e(1)).
    \end{equation} Moreover, $\| \v_\e\|_{L^\infty((0,1))}\leq \arcsin(2\l)(1+o_\e(1))$.
We set $\psi_\e(x)=\v_\e(1-\frac{\e x}{\dd})$ and we have: $\psi_\e \rightarrow\psi_0$ in $\mathcal{C}^1_{\text{loc}}([0,+\infty))$ with $\psi_0(x)=2\arctan \left[ \tan \left(\frac{\arcsin(2\l)}{2} \right)e^{-x}\right]$.

\item[c)] For $\k>\frac{\sqrt{\d}}{\pi \e}$, we set $\k=\frac{\l \sqrt{\d}}{\e}$. If $\l$ independent of $\e$, $\e^2=o_\e(\d)$ and $\d=o_\e(\e)$ then $\|v-1\|_{L^\infty((0,1))} \leq C \sqrt{\frac{\d}{\e}}$ and
\begin{equation}
G_{\e,\d,\k}(v_\e,\v_\e)=\frac{-\tilde{\alpha}_0^2 \d}{{8}\e^2}(1+o_\e(1)),
\end{equation} where $\tilde{\alpha}_0$ is defined by \eqref{defalpha0}.
We let $\tilde{\v}_\e(x):=\v(\frac{\e x}{\dd})$ defined in $[0,\frac{\dd}{\e}]$. If $\d=O_\e(\e^{3/2})$ then $\tilde{\v}_\e$ converges in $C^1_{\text{loc}}(\R^+)$ to the solution $\v_0$  of \eqref{eqlimitk>1/pi}.
\end{itemize}
\end{theorem}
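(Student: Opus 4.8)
The guiding principle is that, along a minimizer, the density $v_\e$ is driven so close to $1$ that $G_{\e,\d,\k}(v_\e,\v_\e)$ differs from $F_{\b,\k}(\v_\e)$ only by negligible coupling terms; each regime a), b), c) is then read off from the corresponding regime of $F_{\b,\k}$ in Theorems~\ref{caskinf1/pi} and~\ref{casksup1/pi}. For the upper bound I would test $G_{\e,\d,\k}$ against $(1,\psi_\b)$, with $\psi_\b$ the minimizer of $F_{\b,\k}$: since $v\equiv1$ satisfies $\int_0^1 v^2=1$ and never vanishes, $(1,\psi_\b)\in\I$, and as $\d/\e^2=1/\b^2$ one has the exact identity $G_{\e,\d,\k}(1,\v)=F_{\b,\k}(\v)$, whence
\[
\min_{\I}G_{\e,\d,\k}=G_{\e,\d,\k}(v_\e,\v_\e)\ \le\ F_{\b,\k}(\psi_\b)=\min_{\J}F_{\b,\k},
\]
the right-hand side being expanded by \eqref{DL11}, \eqref{DL22}, \eqref{DLenergie2}.

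To control $v_\e$ I would complete the square in the $\v'$-terms; using $\int_0^1 v^2=1$,
\[
G_{\e,\d,\k}(v,\v)=\tfrac12\!\int_0^1\! v'^2+\tfrac1{4\e^2}\!\int_0^1\!(1-v^2)^2+\tfrac18\!\int_0^1\! v^2(\v'-2\k)^2+\tfrac1{8\b^2}\!\int_0^1\! v^4\sin^2\v-\tfrac{\k^2}2 ,
\]
so all terms but the last are non-negative, $G_{\e,\d,\k}\ge-\k^2/2$, and, together with the upper bound, $\tfrac1{4\e^2}\int(1-v_\e^2)^2$ and $\tfrac12\int v_\e'^2$ are bounded by $\Lambda_\e:=\min F_{\b,\k}+\k^2/2$ (of order $\k^2$ in a), of order $\d/\e^2$ in b), c)). A one-dimensional interpolation inequality applied to $1-v_\e^2$ then yields a first, non-sharp, uniform bound $v_\e\to1$; this decouples the problem at leading order and lets $\v_\e$ be compared with $\psi_\b$, so that $\v_\e(1)$, $\|\v_\e\|_{L^\infty}$, $\v_\e'$ and the localization of $\v_\e$ in a layer of width $\e/\sqrt\d$ near $x=1$ are under control.

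The heart of the matter, and the step I expect to be hardest, is upgrading this to the sharp pointwise rates \eqref{93}, \eqref{94} and the bound in c). The obstacle is a genuine two-way coupling: the profile of $\v_\e$ is understood only once $v_\e\approx1$, while the true size of $1-v_\e$ is fixed, through its Euler--Lagrange equation
\[
-v_\e''=\tfrac1{\e^2}(1-v_\e^2)v_\e-\tfrac14 v_\e\v_\e'^2-\tfrac{\d}{2\e^2}v_\e^3\sin^2\v_\e+\k v_\e\v_\e'+\mu_\e v_\e
\]
($\mu_\e$ the multiplier of the mass constraint), by the forcing built from $\v_\e$. I would break this loop by a bootstrap: inserting the coarse information on $\v_\e$ into the linearization $-\rho''+\tfrac2{\e^2}\rho\approx(\text{forcing from }\v_\e)$ for $\rho:=1-v_\e$ and comparing with explicit exponential barriers — the operator has healing length of order $\e$ — produces the pointwise estimate, the announced orders $o_\e(\e)$, $o_\e(\d^{1/4})$, $C\sqrt{\d/\e}$ coming from the size and the concentration of the forcing near $x=1$ in each regime, after which the improved bound on $v_\e$ can be fed back if a sharper control is needed.

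Finally I would quantify the gap. From the displayed identity, $\min F_{\b,\k}-|B|\le G_{\e,\d,\k}(v_\e,\v_\e)\le\min F_{\b,\k}$, where $B$ collects the coupling terms $\tfrac18\int(v_\e^2-1)(\v_\e'-2\k)^2+\tfrac1{8\b^2}\int(v_\e^4-1)\sin^2\v_\e$. The crucial cancellation is that $v_\e$ departs from $1$ only in the thin layer near $x=1$, where at the same time $\v_\e'\approx2\k$ and $\sin\v_\e\approx0$ (Neumann condition $\v_\e'(1)=2\k$); combined with the sharp estimate on $v_\e$ this makes $B$ of lower order than $\min F_{\b,\k}$, so that substituting \eqref{DL11}, \eqref{DL22}, \eqref{DLenergie2} gives the stated energy expansions. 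The convergence of $\Phi_\e$, $\psi_\e$, $\tilde\v_\e$ then follows by passing to the limit in the Euler--Lagrange equation for $\v_\e$, which once $v_\e\to1$ reduces to $\v''=\b^{-2}\sin\v\cos\v$ with $\v'(1)=2\k$ (respectively \eqref{eqlimitk>1/pi} after the scaling in c)); the uniform energy bounds give $\mathcal{C}^1_{\mathrm{loc}}$ compactness, and the uniqueness of the limiting profiles in Theorems~\ref{caskinf1/pi} and~\ref{casksup1/pi} identifies the limit.
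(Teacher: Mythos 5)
Your overall architecture (upper bound by testing with $(1,\psi_\b)$, coarse control of $v_\e$ from the completed square, reduction to $F_{\b,\k}$, profile convergence via the Euler--Lagrange system) is the paper's architecture, and your treatment of the blow-up limits in cases a) and b) is essentially the paper's. The clear-cut gap is in case c). Passing to the limit in the Euler--Lagrange equation for $\tilde{\v}_\e$ only yields that the limit solves $\v_0''=\sin\v_0\cos\v_0$, $\v_0(0)=0$, with \emph{some} slope $\omega_0=\lim \tilde{\v}_\e'(0)>0$; this is a one-parameter family of solutions, so there is no ``uniqueness of the limiting profile'' to invoke. Theorem \ref{casksup1/pi} identifies the slope ${\tilde{\a}}_0$ of \eqref{defalpha0} only for \emph{exact} minimizers of $F_{\b,\k}$, whereas $\v_\e$ is merely an almost minimizer (at leading order), and almost minimality of the total energy does not by itself pin down the slope: one must localize the almost minimality to a single period. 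This is exactly what the paper's Proposition \ref{bounddH1} does, by gluing $(1,\Phi_\e)$ on $(0,R)$ to a $\pi\mathbb{Z}$-translate of $(w_\e,\tilde{\v}_\e)$ on $(R+x_0,1/\b)$ to obtain \eqref{keyestimateR}, then passing to the limit, using quasi-periodicity to recognize the energy per period as $h(\omega_0)$ with $h$ as in \eqref{defh2}, and concluding $\omega_0={\tilde{\a}}_0$ from the strict minimality of ${\tilde{\a}}_0$. This comparison step is also the reason for the extra hypothesis $\d=O_\e(\e^{3/2})$, which gives the bound \eqref{eq:utile} on the $v$-part of the energy needed to control the gluing; your proposal contains neither the step nor any role for that hypothesis.

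The second issue is that the step you call ``decoupling'' hides the paper's main technical work, and your barrier bootstrap does not break the circularity as stated. Under the coarse bound $\|1-v_\e\|_{L^\infty}\leq C\k\sqrt{\e}$, the dangerous term is $\frac{\k}{2}\int_0^1(v_\e^2-1)\v_\e'$: with the only available bounds $\|v_\e^2-1\|_{L^2}\lesssim \k\e$ (from \eqref{1eremajorationsurv}) and $\|\v_\e'\|_{L^2}\lesssim\k$, crude Cauchy--Schwarz gives $O(\k^3\e)$, which in case a) is \emph{not} $o(|\min F_{\b,\k}|)=o(\k^2\e/\dd)$ (the ratio is of order $\k\dd$, not small when $\d$ is fixed). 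The paper resolves this via the splitting \eqref{99} and the weighted estimate \eqref{eqestv} with $\e^{2\a}$, $\a=1/4$, absorbing one piece multiplicatively into $\frac18\int v^2\v_\e'^2$, and then runs the Modica--Mortola lower bound with the function $f$ of \eqref{deff} to exclude extra transitions; this simultaneously yields the improved bound $\frac12\int v_\e'^2+\frac{1}{4\e^2}\int(1-v_\e^2)^2\leq o_\e(\e)$ (resp.\ $o_\e(\dd/\e)$ in case b)), after which the sharp rates \eqref{93}, \eqref{94} follow by re-running the coarea argument of Proposition \ref{convv1} --- no ODE barriers are needed. Your linearization $-\rho''+\frac{2}{\e^2}\rho\approx$ forcing could in principle give comparable rates, but it requires (i) an estimate of the Lagrange multiplier and (ii) the information that the forcing has small $L^1$-norm, i.e.\ that $\int\v_\e'^2$, $\k\int|\v_\e'|$ and $\b^{-2}\int\sin^2\v_\e$ are $o(1)$ and concentrated in the layer; with only the coarse bounds the Green's function estimate gives $\|1-v_\e\|_{L^\infty}=O(\k^2\e)$, which is not $o_\e(\e)$ for $\k$ of order one. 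That concentration is precisely the output of the energy comparison, so the barrier step cannot precede it. Finally, the cancellation you invoke for the coupling term $B$ is misstated: $\sin\v_\e\approx 0$ near $x=1$ is false in case b) (there $\v_\e(1)\to\arcsin(2\l)$), and in case c) the deviation of $v_\e$ from $1$ is not localized near $x=1$ at all, since the transitions fill $[0,1]$. What actually makes the expansions work in the paper is not a cancellation but the multiplicative structure: all terms are nonnegative after completing the square, so a uniform $o(1)$ bound on $\|v_\e-1\|_{L^\infty}$ perturbs the $\v$-part of the energy only by a relative factor $1+o_\e(1)$, as in \eqref{estGG} and \eqref{fullenergycask>1/pi}.
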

 This description illustrates the switch of behaviour from one transition close to the outer boundary to many transitions, and thus many stripes.

\subsection{Ideas of the proofs}
In Section \ref{1}, we prove Theorems \ref{caskinf1/pi} and \ref{casksup1/pi}. In both cases we obtain the asymptotic expansion of the energy and then we study the behavior of minimizers. In the analysis of the minimizers of $F_{\b,\k}$, we strongly use the properties of the ODE satisfied by the minimizers, namely
\begin{equation}\label{eqint}
\v'(x)=\sqrt{\frac{1}{\b^2}\sin^2 \v(x) +\v'(0)^2}.
\end{equation}
There are two cases: one where $\v$ is in fact an almost solution of
\begin{equation}\label{eqintsimpl}\v'(x)=\sin \v(x)
\end{equation} which
is the usual Modica-Mortola solution, except that, in our case, it does not bridge $0$ to
$\pi$, but $0$ to $\arcsin 2\kappa \beta$ since we have (\ref{eqintsimpl})
and the Neumann condition at 1: $\v'(1)=2\k$. The usual Modica-Mortola
techniques allow to get an upper bound, lower bound, and expansion of
the energy. We prove that this case happens when $\kappa \b <1/\pi$ and in this case the ground state stays below $\pi$. On the other hand, when $\kappa\b>1/\pi$, we prove that a minimizer goes beyond $\pi$, and we even prove that it goes beyond $N\pi$ with $N$
large. The proof uses the equipartition of energy between the terms
$$\int_0^1 {\v'}^2 \hbox{ and } \int_0^1 \frac{1}{\b^2}\sin^2 \v(x)
+\v'(0)^2.$$ The definition of
 ${\tilde{\a}}_0$ comes from the minimization of the
 energy per period. This leads to the function $h$ defined by \eqref{defh2} in the proof of Proposition \ref{propcask>1surpi}.
To prove the convergence of minimizers of $F_{\b,\k}$ when $\b$ tends to zero we use appropriate bounds on the $H^1$ norm of $\v$ or of some of its blow-up versions, and this allows us to pass to the limit in the Euler-Lagrange equations satisfied by $\v_\b$. At the end of Section \ref{1} we also study the case when $\l$ goes to $+\infty$ and in a separate short subsection we give the asymptotic expansion of the simplified energy $F_{\b,\k}$ when $\l$ is close to $\frac{1}{\pi}$.

In Section \ref{2}, we prove Theorem \ref{minimizerspbcomplet} and some related further results. In order to do so, we first prove the uniform convergence of $v_\e$ to 1 as $\e$ goes to 0, which requires {$\d=o_\ep(\e)$}. We also prove that $\v_\e$ is an almost minimizer of $F_{\b,\k}$ with $\b=\e/\sqrt \d$ and that the full energy is given at leading order by $F_{\b,\k}(\v_\ep)$. We then study the behavior of minimizers by performing some blow-up and  passing to the limit in the Euler-Lagrange equations satisfied by the minimizers. In the case
$\tilde \k>1/\pi$, once we have proved that the limit of $\v_\e$ is quasi periodic, in order to find its  period, we need to prove that the energy per period of $\v_\e$ is almost minimizing. Thus we can deduce that its slope at the origin minimizes the function $h$ defined by \eqref{defh2} in the proof of Proposition
\ref{propcask>1surpi}.

\section{An auxiliary problem: minimization of the energy $F_{\b,\k}(\v)$}\label{1}

In this section, we study the simplified energy \eqref{eqF}.

\begin{proposition}\label{Propinitiale}
There exists a minimizer of $F_{\b,\k}$ in $\mathcal{J}$. Such a minimizer satisfies the following Euler-Lagrange equations:

\begin{equation}
\left\{
\begin{array}{lcll}\label{Equationsimplifie}
\v'' &=&\frac{1}{\b^2} \sin \v \cos \v \text{ in } (0,1), \\
\v(0)&=&0, \\
\v'(1)&=&2 \k.
\end{array}
\right.
\end{equation}
A minimizer $\v$ is smooth in $[0,1]$ and it also satisfies that for every $x$ in $[0,1]$
\begin{equation}\label{integrale1eree}
\v'(x)^2 =\frac{1}{\b^2}\sin^2 \v(x) +\v'(0)^2
\end{equation}
and $\v$ is increasing.
\end{proposition}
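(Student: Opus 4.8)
The plan is to use the direct method of the calculus of variations. First I would observe that $F_{\b,\k}$ is bounded below on $\mathcal{J}$: the nonnegative terms $\frac{1}{8}\int \v'^2$ and $\frac{1}{8\b^2}\int \sin^2\v$ control the functional, while the linear term $-\frac{\k}{2}\int_0^1 \v'\,dx = -\frac{\k}{2}\v(1)$ (using $\v(0)=0$) can be dominated via Young's inequality, writing $\frac{\k}{2}\v(1) = \frac{\k}{2}\int_0^1 \v'\,dx \leq \frac{1}{16}\int_0^1 \v'^2\,dx + C\k^2$. This shows coercivity of the $H^1$-seminorm along a minimizing sequence, and together with $\v(0)=0$ (Poincaré) gives a uniform $H^1$ bound. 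I would then extract a weakly convergent subsequence $\v_n \rightharpoonup \v$ in $H^1$, with strong $L^2$ and uniform convergence by Rellich. Lower semicontinuity of the $H^1$-seminorm handles the gradient term, continuity of $\sin^2$ under uniform convergence handles the potential term, and the linear term converges since $\v_n(1)\to\v(1)$. Hence the limit $\v$ is a minimizer.

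\textbf{Euler-Lagrange equations.} Next I would compute the first variation. For a test function $\eta \in H^1((0,1))$ with $\eta(0)=0$ (to respect the constraint $\v(0)=0$), setting $\frac{d}{dt}F_{\b,\k}(\v+t\eta)|_{t=0}=0$ gives
\begin{equation*}
\frac{1}{4}\int_0^1 \v'\eta'\,dx + \frac{1}{4\b^2}\int_0^1 \sin\v\cos\v\,\eta\,dx - \frac{\k}{2}\int_0^1 \eta'\,dx = 0.
\end{equation*}
Taking first $\eta \in C_c^\infty((0,1))$ yields the weak form of $\v'' = \frac{1}{\b^2}\sin\v\cos\v$ in the interior, which by elliptic regularity (bootstrapping, since the right-hand side is smooth in $\v$) shows $\v$ is smooth. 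Then allowing $\eta(1)\neq 0$ and integrating by parts produces the boundary term $\frac{1}{4}\v'(1)\eta(1) - \frac{\k}{2}\eta(1)$, so the natural (non-homogeneous Neumann) condition $\v'(1)=2\k$ emerges. The condition $\v(0)=0$ is imposed. Smoothness up to the boundary follows from standard regularity for the Neumann problem.

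\textbf{First integral and monotonicity.} For the conserved quantity, I would multiply the ODE $\v''=\frac{1}{\b^2}\sin\v\cos\v$ by $\v'$ and integrate: this gives $\frac{d}{dx}\left(\frac{1}{2}\v'^2 - \frac{1}{2\b^2}\sin^2\v\right)=0$, so $\v'(x)^2 - \frac{1}{\b^2}\sin^2\v(x)$ is constant. Evaluating at $x=0$ where $\v(0)=0$ gives the constant as $\v'(0)^2$, yielding \eqref{integrale1eree}. Finally, monotonicity follows from this first integral: since $\v'(x)^2 = \frac{1}{\b^2}\sin^2\v(x)+\v'(0)^2 > 0$ whenever $\v'(0)\neq 0$, $\v'$ never vanishes and so keeps a constant sign; the boundary condition $\v'(1)=2\k>0$ forces that sign to be positive, hence $\v$ is increasing. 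The only delicate point is the degenerate case $\v'(0)=0$: here the first integral alone does not immediately preclude $\v'$ vanishing, so I would argue that $\v\equiv 0$ solves the equation with $\v'(0)=0$ but violates $\v'(1)=2\k>0$, and more generally invoke uniqueness for the ODE to rule out interior critical points for a nontrivial increasing solution. I expect this sign/monotonicity bookkeeping—ensuring $\v'(0)>0$ and ruling out the constant solution—to be the main subtlety, though it is handled cleanly by the positivity of $\k$.
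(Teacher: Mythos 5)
Your proposal is correct and follows essentially the same route as the paper: the paper dismisses existence, the Euler--Lagrange system and regularity as ``classical'' (which your direct-method and first-variation arguments fill in correctly), derives \eqref{integrale1eree} by the same multiplication by $\v'$, and rules out $\v'(0)=0$ by exactly your Cauchy--Lipschitz argument ($\v\equiv 0$ would contradict $\v'(1)=2\k>0$), after which the first integral forces $\v'>0$ throughout.
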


\begin{proof}
The existence and smoothness of minimizers are classical. Multiplying the first equation of  \eqref{Equationsimplifie} by $\v'$ and integrating we obtain \eqref{integrale1eree}. Now if $\v'(0)=0$ then $ \v \equiv 0$ in $[0,1]$ from the Cauchy-Lipschitz Theorem. This contradicts the fact that $\v'(1)=2\k$ with $\k>0$. We deduce that $\v'(0) \neq 0$ and thus $\v'$ does not vanish in $[0,1]$. Since $\v'(1)=2\k$ we obtain that $\v'>0$ in $[0,1]$.
\end{proof}

The rest of this section is devoted to the proof of Theorems \ref{caskinf1/pi} and \ref{casksup1/pi}. We recall the notation (\ref{tildekappa}).

\subsection{The case $\k<\frac{1}{\pi \b}$}

\begin{proposition}\label{DLenergie1}
For $\k< \frac{1}{\pi \b}$,  \eqref{DL11} holds.
If $\k$ is bounded as $\b$ goes to zero, or more generally if $\l=\k \b=o_\b(1)$, then we have \eqref{DL22}.
\end{proposition}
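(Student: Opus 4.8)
The plan is to prove \eqref{DL11} by squeezing $F_{\b,\k}(\v_\b)$ between two matching bounds via the classical Modica--Mortola device, and then to read off \eqref{DL22} by letting $\l\to0$. Throughout I use that, by Proposition \ref{Propinitiale}, the minimizer $\v_\b$ is increasing with $\v_\b(0)=0$, so $y=\v_\b(x)$ is a bijection of $[0,1]$ onto $[0,\theta]$ with $\theta:=\v_\b(1)$ and the substitution $y=\v_\b(x)$ is available. Since $\k=\l/\b$, the last term of the energy is $-\tfrac{\k}{2}\int_0^1\v_\b'\,dx=-\tfrac{\l}{2\b}\,\v_\b(1)$.

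For the lower bound I would use the pointwise AM--GM inequality $\tfrac18\v_\b'^2+\tfrac1{8\b^2}\sin^2\v_\b\ge\tfrac1{4\b}\,\v_\b'\,|\sin\v_\b|$, valid because $\v_\b'>0$. Integrating and changing variables,
\[
F_{\b,\k}(\v_\b)\ \ge\ \frac1{2\b}\int_0^\theta\Big(\tfrac12|\sin y|-\l\Big)\,dy=:\frac{1}{2\b}\,M(\theta).
\]
The crux is to minimize $M$ over $\theta\ge0$: on $[0,\pi]$ its only interior minimum is at $\theta_*:=\arcsin(2\l)<\tfrac\pi2$, while each later half-period contributes $\int_{k\pi}^{(k+1)\pi}(\tfrac12|\sin|-\l)=1-\l\pi$, which is \emph{strictly positive precisely because $\l<1/\pi$}. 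A short check that no subsequent dip falls below $M(\theta_*)$ then gives the global minimum $M(\theta_*)=\tfrac12(1-\sqrt{1-4\l^2})-\l\arcsin(2\l)$, whence $F_{\b,\k}(\v_\b)\ge\tfrac{1-\sqrt{1-4\l^2}}{4\b}-\tfrac{\l}{2\b}\arcsin(2\l)$, exactly the leading part of \eqref{DL11} and with no error on this side.

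For the matching upper bound I would test with the explicit profile $w(x)=2\arctan\big[\tan(\tfrac{\theta_*}2)\,e^{(x-1)/\b}\big]$, the solution of $w'=\tfrac1\b\sin w$ with $w(1)=\theta_*$ (the rescaled $\psi_0$ of Theorem \ref{caskinf1/pi}). Since $w(0)=2\arctan[\tan(\tfrac{\theta_*}2)e^{-1/\b}]=:\eta$ is exponentially small but nonzero, I replace $w$ by the admissible competitor $w-\eta$, which changes the energy only by $O(\eta/\b^2)$. Because $w'=\tfrac1\b\sin w$ makes the AM--GM step an equality, the energy of $w$ integrates to $\tfrac1{4\b}(\cos\eta-\cos\theta_*)-\tfrac{\l}{2\b}(\theta_*-\eta)$, so that $F_{\b,\k}(\v_\b)\le\tfrac{M(\theta_*)}{2\b}+Ce^{-c/\b}$. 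Combining the two bounds pins $F_{\b,\k}(\v_\b)$ to $\tfrac{M(\theta_*)}{2\b}$ up to a one-sided, exponentially small remainder, which is in particular $o_\b(\b^n)$ for every $n$; this is \eqref{DL11}.

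Finally, \eqref{DL22} follows from \eqref{DL11} by expanding as $\l=\k\b\to0$: using $1-\sqrt{1-4\l^2}=2\l^2+O(\l^4)$ and $\arcsin(2\l)=2\l+O(\l^3)$, the leading term becomes $-\tfrac{\l^2}{2\b}(1+O(\l^2))=-\tfrac{\k^2\b}{2}(1+o_\b(1))$; a robust alternative is to note $\|\v_\b\|_\infty=\v_\b(1)\to0$, linearize $\sin\v\sim\v$, and test with the exact solution $A\sinh(x/\b)$ of the linearized Euler--Lagrange problem, recovering $-\tfrac{\k^2\b}{2}\tanh(1/\b)(1+o_\b(1))$. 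I expect the main obstacle to be twofold: first, the global minimization of $M$, where one must rule out any later oscillation dipping below $M(\theta_*)$ — this is exactly the step that needs $\l<1/\pi$ and pins down the threshold; second, upgrading the remainder in \eqref{DL11} to $o_\b(\b^n)$ for all $n$, which forces one to exploit the exponential convergence of the boundary-layer profile and to build the competitor so that enforcing $\v(0)=0$ costs only exponentially small energy.
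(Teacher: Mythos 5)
Your proposal is correct and follows essentially the same route as the paper: the same Modica--Mortola lower bound (your $M(\theta)$ is exactly twice the paper's $f(x)=\frac{1-\cos x}{4}-\frac{\l x}{2}$ extended periodically, with $\l<\frac1\pi$ making each additional interval of length $\pi$ cost $1-\l\pi>0$), a matching upper bound built from the profile solving $w'=\frac1\b\sin w$ with $w(1)=\arcsin(2\l)$, and a Taylor expansion of \eqref{DL11} to obtain \eqref{DL22}. The only cosmetic difference is how admissibility at $x=0$ is restored: you shift the profile down by the exponentially small value $\eta$, whereas the paper interpolates linearly on $[0,\gamma]$ with $\gamma=\b^n$; both devices produce the $o_\b(\b^n)$ remainder.
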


\begin{proof}
We first find a lower bound on the energy following the method of Modica-Mortola and then we construct a test function which gives the matching upper bound.

\textit{Lower bound:} By using Modica-Mortola's trick, we have that for $\v$ a minimizer of $F_{\b,\k}$ in $\mathcal{J}$:
\begin{eqnarray}
F_{\b,\k}(\v) & \geq &\frac{1}{4\b}\int_0^1|\v'||\sin \v|-\frac{\l}{2\b}\v(1) \nonumber
\end{eqnarray}
We use a change of variable and the fact that $\varphi$ is increasing to find that:
\[ F_{\b,\k}(\v)  \geq \frac{1}{4\b}\int_0^{\v(1)}|\sin y|\ dy - \frac{\l}{2\b}\v(1).\]
We let $N:=E( \frac{\v(1)}{\pi})$ where $E$ denotes the integer part. We obtain that
\[ F_{\b,\k}(\v) \geq \frac{N}{2\b}+\frac{1}{4\b}\int_{N\pi}^{\v(1)} |\sin y|\ dy-\frac{\l}{2\b}\v(1).\]
Since $y \mapsto |\sin y|$ is $\pi-$periodic and $\sin y \geq 0$ for $y$ in $[0,\pi]$, then
\[\int_{N\pi}^{\v(1)} |\sin y|\ dy= \int_0^{\v(1)-N\pi} \sin y \ dy= 1-\cos (\v(1)-N\pi).  \]
Thus
\begin{eqnarray}\label{firstlowerbound}
F_{\b,\k}(\v) \geq \frac{N}{2\b}(1-\l \pi)+\frac{1}{\b}\left[ \frac{1-\cos (\v(1)-N\pi)}{4}-\frac{\l}{2}(\v(1)-N\pi). \right]
\end{eqnarray}
Note that this first lower bound is valid for any $\k$. Now we study the function
\begin{equation}\label{deff}
f(x)=\frac{1-\cos x}{4}-\frac{\l x}{2}
\end{equation}
for $x$ in $[0,\pi]$. We have that $f$ is smooth, $f'(x)=\frac{\sin x}{4}-\frac{\l}{2}$ and $f''(x)=\frac{\cos x}{4}$. If $\l=\k \b\leq \frac {1}{2}$, then we set
\begin{equation}\label{defmin}
x_m^\b=\arcsin(2 \k \b) \in [0,\frac{\pi}{2}]
\end{equation} and $x_M^\b=\pi-\arcsin(2 \k \b)$.
We obtain that $f$ has a local minimum at $x_m^\b$ with
\begin{equation}\label{minn}
f(x_m^\b)= \frac{1-\sqrt{1-4\l^2}}{4}-\frac{\l}{2}\arcsin(2\l),
\end{equation}
besides $f$ has a maximum at $x_M^\b$ and $f(x_M^\b)=\frac{1+\sqrt{1-4\l^2}}{4}-\frac{\l}{2}(\pi-\arcsin(2\l))$. In order to know if the minimum of the function $f$ in $[0,\pi]$ is attained in $x_m^\b$ or in $\pi$ we set
\[ g(\l)=f(x_m^\b)-f(\pi)= \frac{1-\sqrt{1-4\l^2}}{4}-\frac{\l}{2}\arcsin(2\l)-\frac{1-\l \pi}{2}, \]
We observe that: $g(0)=-\frac{1}{2}$, $g(\frac{1}{2})=\frac{-1}{4}+\frac{\pi}{8}>0$ and $g'(\l)=\frac{\pi}{2}-\frac{\arcsin(2\l)}{2}>0$. Thus there exists a critical value of $\l$ called $\l_{\text{crit}} < \frac{1}{2}$ such that: if $\l < \l_{\text{crit}}$ then $f$ attains its unique minimum at $x_m^\b=\arcsin(2\l)$ and $\min_{[0,\pi]}f= \frac{1-\sqrt{1-4\l^2}}{4}-\frac{\l}{2}\arcsin(2\l)$, whereas if $ \l \geq \l_{\text{crit}}$ then $\min_{[0,\pi]}f= f(\pi)= \frac{1-\l \pi}{2}$. Note that $\l_{\text{crit}}> \frac{1}{\pi}$ since $g(\frac{1}{\pi})<0$.
Besides if $\l> \frac{1}{2}$, then $f'<0$ and $f$ is decreasing, meaning that $\min_{[0,\pi]}f =\frac{1-\l\pi}{2}$.\\
From this study, we obtain that for $\l < \frac{1}{\pi}$ we have
\begin{equation}\label{lowerbound}
F_{\b,\k}(\v) \geq \frac{N}{2\b}(1-\l \pi)+\frac{1}{\b}\left[\frac{1-\sqrt{1-4\l^2}}{4}-\frac{\l}{2}\arcsin(2\l)\right]
\end{equation}
for every {$\v$ minimizer of $F_{\b,\k}$} in $\J$ and $N=E(\frac{\varphi(1)}{\pi})$. \\

\textit{Upper bound:} We now construct a test function which shows that for $\l< \frac{1}{\pi}$ we have that a minimizer of $F_{\b,\k}$ in $\mathcal{J}$ satisfies $\v_\b(1)<\pi$ and the lower bound given by \eqref{lowerbound} with $N=0$ is optimal. For $\v$ in $\mathcal{J}$, we set $\psi(x)=\v(\b x)$ defined in $[0,\frac{1}{\b}]$ and we observe that
\[F_{\b,\k}(\v)=H_{\b,\k}(\psi):=\frac{1}{8\b}\int_0^{\frac{1}{\b}}\left(\psi'(x)^2+\sin^2 \psi(x) \right)dx -\frac{\l}{2\b}\psi(\frac{1}{\b}).\]
Let $\gamma>0$ be a small number to be fixed later ($\gamma \rightarrow 0$ as $\b \rightarrow 0$). In $(\gamma,\frac{1}{\b})$ we take $\psi(x)=2\arctan \left( e^{x-\frac{1}{\b}}\tan[\frac{\arcsin(2\l)}{2}]\right)$. This is the solution of
\begin{equation}
\left\{
\begin{array}{lcll}
\psi'&=&\sin \psi \ \text{ in } (\gamma,\frac{1}{\b}), \\
\psi'(\frac{1}{\b})&=&2\l, \\
\psi(\frac{1}{\b})&=& \arcsin(2\l).
\end{array}
\right.
\end{equation}
We thus have: $$ \int_\gamma^\frac{1}{\b} \psi'(x)^2+\sin^2\psi(x)\ dx=2\int_\gamma^\frac{1}{\b}|\psi'(x)||\sin \psi(x)|dx =2\int_{\psi(\gamma)}^{\psi(\frac{1}{\b})} |\sin y|\ dy,$$ where in the last equality we used the change of variable formula and the fact that $\psi$ is increasing. We set $\eta:=\psi(\gamma)= 2 \arctan (e^{\gamma-\frac{1}{\b}}\tan[\frac{1}{2}\arcsin (2 \l)])$ and in $[0,\gamma]$ we take $\psi(x)=\frac{\eta x}{\gamma}$.
We then have that
\begin{eqnarray}
H_{\b,\k}(\psi)&=&\frac{1}{\b}\left[ \int_0^\gamma \frac{\eta^2}{8\gamma^2}+\frac{1}{8}\sin^2(\frac{\eta x}{\gamma})dx-\frac{\l \eta}{2} \right] \nonumber \\
& & \phantom{aaaaaa } +\frac{1}{4\b} \int_\gamma^{\frac{1}{\b}} |\psi'(x)||\sin \psi(x)|dx -\frac{\l}{2\b}[\psi(\frac{1}{\b})-\eta]. \nonumber
\end{eqnarray}
It follows that
\begin{equation}\label{equationbeta}
H_{\b,\k}(\psi) \leq \left( \frac{\eta^2}{8\gamma}+\frac{\gamma}{8}\right)\frac{1}{\b}+ \frac{\cos \eta - \sqrt{1-4\l^2}}{4\b}-\frac{\l}{2\b}\arcsin(2 \l).
\end{equation}
We then choose $\gamma$ such that $\frac{\gamma}{\beta} \rightarrow 0$ and $\frac{\eta^2}{\gamma \beta}\rightarrow 0$ (we can take $\gamma=\beta^n$ for all $n \geq 2$) and we obtain
\begin{equation}\label{upperbound}
\min_{\mathcal{J}} F_{\b,\k} \leq H_{\beta,\k}(\psi) \leq \frac{1}{\b}\left[\frac{1-\sqrt{1-4\l^2}}{4}-\frac{\l}{2}\arcsin(2\l)\right] +o_\beta(\beta^n), \text{ for all } n \text{ in } \mathbb{N}^*.
\end{equation}
By using \eqref{lowerbound} and \eqref{upperbound} together we find that for $\l < \frac{1}{\pi}$, a minimizer $\v_\b$ of $F_{\b,\k}$ satisfies  $N:=E(\frac{\v_\b(1)}{\pi})=0$ and

\begin{equation}
\min _{\v \in \J} F_{\b,\k}(\v)= \frac{1-\sqrt{1-4\l^2}}{4\b}-\frac{\l}{2\b}\arcsin(2\l) +o_\b(\beta^n) \text{ for all } n \text{ in } \mathbb{N}^*.
\end{equation}
We recall that $\k=\frac{\l}{\b}$ and this yields that, if $\k \b =o_\b(1)$ we have
\begin{eqnarray}
\min_{\mathcal{J}} F_{\b,\k} &=&\frac{1-\sqrt{1-4\k^2 \b^2}}{4\b}-\frac{\k}{2}\arcsin(2\k \b)+o_\b(\b^2) \nonumber \\
&=& \left( 1-(1-2\k^2\b^2)\right)\frac{1}{4\b}-\k^2\b+o_\b(\k^2\b). \nonumber
\end{eqnarray}
Hence if $\k$ is fixed or if $\k=o_\b(\frac{1}{\b})$ then we obtain \eqref{DL22}.
\end{proof}

\begin{proposition}\label{valuephi1}
For $\l=\k \b <\frac{1}{\pi}$, with $\l$ independent of $\b$, let $\v_\b$ be a minimizer of $F_{\b,\k}$ in $\mathcal{J}$. We have that
\begin{equation}\label{eq:valuephi1}
\v_\b(1)=\arcsin(2 \k \b) +o_\b(\b^n)
\end{equation} for all $n$ in $\mathbb{N}^*$. In particular for $\beta$ small enough we have $0\leq \v_\b(x) \leq \v_\b(1)< \frac{\pi}{2}$. Besides if $\l=o_\b(1)$, we have that $\v_\b(1)=2\k \b(1+o_\b(1))$.
\end{proposition}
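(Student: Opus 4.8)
The plan is to deduce the precise value of $\v_\b(1)$ by showing that it is a near-minimizer of the auxiliary one-variable function $f$ from \eqref{deff}, and then to exploit the nondegeneracy of the global minimum of $f$; all the needed estimates are already produced in Proposition \ref{DLenergie1} and its proof, so the argument is essentially a post-processing of that proposition.

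First I would record two facts from the proof of Proposition \ref{DLenergie1}, valid for $\l<1/\pi$: a minimizer satisfies $N:=E(\v_\b(1)/\pi)=0$, so that $\v_\b(1)\in[0,\pi)$, and with this the lower bound \eqref{firstlowerbound} becomes $F_{\b,\k}(\v_\b)\geq \frac1\b f(\v_\b(1))$, with $f(x)=\frac{1-\cos x}{4}-\frac{\l x}{2}$; moreover the same proof identifies $x_m^\b=\arcsin(2\l)$ (see \eqref{defmin}, \eqref{minn}) as the unique global minimizer of $f$ on $[0,\pi]$, since $\l<1/\pi<\l_{\text{crit}}$. Comparing this lower bound with the exact expansion \eqref{DL11}, namely $F_{\b,\k}(\v_\b)=\frac1\b f(x_m^\b)+o_\b(\b^n)$ for all $n$, and using $f(\v_\b(1))\geq f(x_m^\b)$, I would obtain $0\leq f(\v_\b(1))-f(x_m^\b)=o_\b(\b^n)$ for every $n$, i.e. $\v_\b(1)$ is a super-polynomially accurate almost-minimizer of $f$.

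To turn this into \eqref{eq:valuephi1} I would use that the minimum of $f$ is nondegenerate: $x_m^\b=\arcsin(2\l)$ is interior to $[0,\pi]$ and $f''(x_m^\b)=\frac{\cos x_m^\b}{4}=\frac{\sqrt{1-4\l^2}}{4}>0$. Since $f$ is smooth on the compact interval with a unique nondegenerate interior global minimum, there is $c=c(\l)>0$ with $f(x)-f(x_m^\b)\geq c\,(x-x_m^\b)^2$ on $[0,\pi]$; evaluating at $\v_\b(1)$ gives $(\v_\b(1)-x_m^\b)^2=o_\b(\b^n)$ for all $n$, whence $\v_\b(1)=\arcsin(2\l)+o_\b(\b^n)$. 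As $2\l<1$ forces $\arcsin(2\l)<\pi/2$, for $\b$ small we get $\v_\b(1)<\pi/2$, and since $\v_\b$ is increasing with $\v_\b(0)=0$ by Proposition \ref{Propinitiale}, also $0\leq \v_\b(x)\leq \v_\b(1)<\pi/2$.

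For the case $\l=o_\b(1)$ the same comparison runs, but now only with the weaker relative estimate \eqref{DL22}, giving merely $f(\v_\b(1))=-\frac{\l^2}{2}(1+o_\b(1))$. I would first argue that $\v_\b(1)\to 0$ (if $\v_\b(1)\geq\delta$ along a subsequence then $f(\v_\b(1))\geq \frac{1-\cos\delta}{4}-\frac{\l\pi}{2}\to\frac{1-\cos\delta}{4}>0$, contradicting $f(\v_\b(1))<0$), which both justifies the expansion $f(x)=\frac18(x-2\l)^2-\frac{\l^2}{2}+O(x^4)$ near $0$ and shows $\v_\b(1)=O(\l)$; substituting then yields $(\v_\b(1)-2\l)^2=o(\l^2)$, that is $\v_\b(1)=2\k\b(1+o_\b(1))$. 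I expect the genuine difficulty to be concentrated precisely here: as $\l\to 0$ the minimizer $x_m^\b=\arcsin(2\l)$ of $f$ slides into the boundary point $0$ and the quadratic well flattens in relative terms, so the clean nondegeneracy argument of the fixed-$\l$ case must be replaced by a direct local analysis of $f$ at the origin, together with the independent verification that $\v_\b(1)\to 0$. By contrast, for $\l$ bounded away from $0$ the constant $c(\l)$ is uniform and the conclusion is immediate.
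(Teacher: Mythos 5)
Your proposal is correct and follows essentially the same route as the paper's proof: both sandwich $f(\v_\b(1))$ between the lower bound \eqref{firstlowerbound} (with $N=0$) and the matching expansion from Proposition \ref{DLenergie1}, and then use the nondegeneracy of the minimum of $f$ at $x_m^\b=\arcsin(2\l)$ to convert near-minimality in energy into the estimate \eqref{eq:valuephi1}. The only differences are cosmetic: you invoke a global quadratic lower bound $f(x)-f(x_m^\b)\geq c(\l)(x-x_m^\b)^2$ where the paper Taylor-expands around $x_m^\b$, and in the case $\l=o_\b(1)$ you expand $f$ at the origin after first proving $\v_\b(1)\to 0$ and $\v_\b(1)=O(\l)$, whereas the paper expands around the $\b$-dependent point $x_m^\b$ using $f''(x_m^\b)\to \frac14$ --- both being minor variants of the same argument.
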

\begin{proof}
From \eqref{firstlowerbound}, where we know that $N=0$, and \eqref{upperbound}, we deduce that for $\v_\b$ a minimizer of $F_{\b,\k}$ in $\mathcal{J}$ we have
\begin{equation}\label{num1}
\frac{1-\cos \v_\b(1)}{4\b}-\frac{\l\v_\b(1)}{2\b}\leq F_{\b,\k}(\v_\b) \leq \frac{1}{\b}\left[\frac{1-\sqrt{1-4\l^2}}{4}-\frac{\l}{2}\arcsin(2\l)\right]+o_\b(\b^n)
\end{equation}
for all $n$ in $\mathbb{N}^*$.
With $f$ defined as \eqref{deff},  we have $\frac{1-\sqrt{1-4\l^2}}{4}-\frac{\l}{2}\arcsin(2\l)=\min_{[0,\pi]} f=f(x_m^\b)$, where $x_m^\b=\arcsin(2 \k\b)$. Then (\ref{num1}) implies
\begin{equation}\label{fmin}f(\v_\b(1))\leq f(x_m)+o_\b(\b^n).
\end{equation}
\begin{itemize}
\item[1)] If $\l=\k \b$ is independent of $\b$, then $f$ and $x_m:=x_m^\b=\arcsin(2 \l)$ do not depend on $\b$. From \eqref{fmin}, we obtain that
\[ f(x_m)\leq f(\v_\b(1))\leq f(x_m)+o_\b(\b^n)\]
for all $n$ in $\mathbb{N}^*$. The study of the function $f$ done in the proof of the previous proposition then shows that $\v_\b(1) \rightarrow \arcsin(2 \l)$.  Expanding $f$ around $x_m$, we have that
\begin{equation}\label{expansion}
f(\v_\b(1))=f(x_m)+\frac{f''(x_m)}{2}(\v_\b(1)-x_m)^2+o_\b[(\v_\b(1)-x_m)^2].
\end{equation}
This proves that  $(\v_\b(1)-x_m)=o_\b(\b^n)$ for all $n$ in $\mathbb{N}$.
\item[2)] If $\l=\k \b=o_\b(1)$, from \eqref{num1} we still have that
\[ f(\v_\b(1))=f(x_m^\b)+o_\b(\b^n).\]
besides we observe that $f$ defined by \eqref{deff} converges uniformly to $f_0(x)=\frac{1-\cos x}{4}$ on $[0,\pi]$. Since $f(x_m^\b)$ goes to zero as $\b$ goes to zero this implies that $\v_\b(1)\rightarrow 0$ as $\b\rightarrow 0$. Now we can write
 \begin{equation}\nonumber
f_\b(\v_\b(1))=f(x_m^\b)+\frac{f''(x_m^\b)}{2}(\v_\b(1)-x_m^\b)^2+o_\b[(\v_\b(1)-x_m^\b)^2].
\end{equation}
Since $f''(x_m^\b) \rightarrow \frac{1}{4}$ we conclude that $\v_\b(1)=x_m^\b+ o_\b(\b^n)$ for all $n$ in $\mathbb{N}^*$. Expanding $x_m^\b=\arcsin(2\k \b)=2\k \b +o_\b(\k\b)$, we conclude the proof.
\end{itemize}
\end{proof}

\begin{proposition}
Let $\l=\k \b< \frac{1}{\pi}$, if $\l$ is independent of $\b$ or if $\l=o_\b(1)$, then for $\b$ small enough, there exists a unique minimizer of $F_{\b,\k}$ in $\mathcal{J}$.
\end{proposition}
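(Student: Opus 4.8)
The plan is to show that every minimizer is completely determined by its initial slope $\v'(0)$, and that the Neumann condition $\v'(1)=2\k$ selects this slope uniquely once we know the minimizer stays below $\frac{\pi}{2}$. By Proposition~\ref{Propinitiale}, any minimizer $\v$ solves the Euler--Lagrange system \eqref{Equationsimplifie}, is strictly increasing with $a:=\v'(0)>0$, and satisfies the first integral \eqref{integrale1eree}. Since $\v'>0$, $\v$ is in fact the solution of the first-order Cauchy problem
\[
\v'(x)=\sqrt{\tfrac{1}{\b^2}\sin^2\v(x)+a^2},\qquad \v(0)=0,
\]
whose right-hand side is smooth in $\v$ and bounded below by $a>0$; by Cauchy--Lipschitz this solution, which I call $\v^a$, is unique. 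Hence a minimizer is entirely determined by the single number $a=\v'(0)$, and it remains to show that at most one admissible value of $a$ can occur. Moreover, by Proposition~\ref{valuephi1}, for $\b$ small enough every minimizer satisfies $\v_\b(1)<\frac{\pi}{2}$, so I may restrict attention to those $a>0$ for which $\v^a(1)<\frac{\pi}{2}$.

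The key step is to prove that $a\mapsto(\v^a)'(1)$ is strictly increasing on this range. Since $\v^a$ is increasing I invert it and introduce the transit time
\[
G(a,\phi):=\int_0^\phi\frac{dy}{\sqrt{\tfrac{1}{\b^2}\sin^2 y+a^2}},
\]
so that $\v^a(1)=\phi$ is characterised by $G(a,\phi)=1$. One checks $\partial_\phi G>0$ and $\partial_a G<0$, so by the implicit function theorem $\phi=\phi(a)$ is well defined, smooth and strictly increasing. Using the first integral at $x=1$ gives $(\v^a)'(1)=\sqrt{\tfrac{1}{\b^2}\sin^2\phi(a)+a^2}$; on the range where $\phi(a)<\frac{\pi}{2}$ the map $\phi\mapsto\sin^2\phi$ is increasing, $\phi(a)$ is increasing and $a\mapsto a^2$ is increasing, so $a\mapsto(\v^a)'(1)$ is strictly increasing, hence injective.

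To conclude, any minimizer corresponds to some $a>0$ with $\v^a(1)<\frac{\pi}{2}$ and $(\v^a)'(1)=2\k$; by the injectivity just established there is at most one such $a$, hence at most one minimizer. Together with the existence part of Proposition~\ref{Propinitiale} this yields exactly one minimizer for $\b$ small, and since the only ingredient beyond the Euler--Lagrange structure is the bound $\v_\b(1)<\frac{\pi}{2}$, which holds in both regimes treated by Proposition~\ref{valuephi1}, the argument covers the case $\l$ independent of $\b$ and the case $\l=o_\b(1)$ at once.

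I expect the main obstacle to be the monotonicity step, and in particular the necessity of the restriction $\phi<\frac{\pi}{2}$. The transit-time integral $G(a,\phi)$ is finite and smooth up to $\phi=\frac{\pi}{2}$ because the integrand is bounded once $a>0$, so the implicit function argument applies cleanly on $0<\phi<\frac{\pi}{2}$; but beyond $\frac{\pi}{2}$ the factor $\sin^2\phi$ begins to decrease and the monotonicity of $a\mapsto(\v^a)'(1)$ can fail. This is exactly why the a priori bound $\v_\b(1)<\frac{\pi}{2}$ furnished by Proposition~\ref{valuephi1} is indispensable: without confining the minimizer to the regime where $\phi\mapsto\sin^2\phi$ is monotone, the shooting map need not be injective and uniqueness could break down, consistently with the multiplicity of transitions appearing when $\k\b>\frac{1}{\pi}$.
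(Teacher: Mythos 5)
Your proof is correct and is essentially the paper's argument: both reduce uniqueness to a monotone shooting problem in the initial slope $\a=\v'(0)$, using the first integral \eqref{integrale1eree} to identify the minimizer with the solution of a first-order Cauchy problem (so Cauchy--Lipschitz applies) and the bound $\v_\b(1)<\frac{\pi}{2}$ from Proposition \ref{valuephi1} to stay in the range where $\sin^2$ is monotone. The only difference is bookkeeping: you fix the transit time (length $1$) and show the Neumann value $a\mapsto(\v^a)'(1)$ is strictly increasing, whereas the paper fixes the Neumann condition, writing $\v(1)=\arcsin\sqrt{4\l^2-\b^2\a^2}$, and shows the transit-time function $g(\a)=\b\int_0^{\v(1)}\frac{dy}{\sqrt{\sin^2 y+\b^2\a^2}}$ is strictly decreasing from $+\infty$ to $0$ on $(0,2\k)$.
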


\begin{proof}
Let $\v_\b$ be a minimizer of $F_{\b,\k}$ in $\mathcal{J}$. From Proposition \ref{Propinitiale} and Proposition \ref{valuephi1}, we know that $\v_\b$ is increasing and that $0\leq \v_\b(x) \leq \v_\b(1)<\frac{\pi}{2}$, for $\b$ small enough. For simplicity, we let $\v=\v_\b$ and we let $\a:=\v'(0)$.
We observe that \eqref{integrale1eree} implies that $4\l^2-\b^2\a^2\geq 0$ and since we know that $\v(1)< \frac{\pi}{2}$ we deduce that $\v(1)=\arcsin [4\l^2-\b^2\a^2]$. Taking the square root of \eqref{integrale1eree} we obtain
$ \v'(x)=\frac{\sqrt{\sin^2 \v(x) + \b^2\a^2}}{\b}$ for all $x$ in $[0,1]$. This implies
\begin{equation}
g(\a):= \beta \int_0^{ \arcsin [4\l^2-\b^2\a^2]} \frac{dy}{\sqrt{\sin^2 y +\b^2\a^2}}=1.
\end{equation}
We claim that there exists a unique $\a>0$ such that $g(\a)=1$. This will imply uniqueness of the minimizer $\v$ by the Cauchy-Lipschitz Theorem. To prove our claim we observe that $g$ is smooth, $g(0)=+\infty$, $g(2\k)=0$ and

\begin{eqnarray}
g'(\a) &=&\beta \int_0^{\arcsin [4\l^2-\b^2\a^2]} \frac{-\b^2 \a\ dy}{(\sin^2y+\b^2 \a^2)^{3/2}} \nonumber \\
  & & \phantom{aaaaaaaa}- \frac{2\b\a}{\sqrt{1-4\l^2+\b^2\a^2}}\times \frac{1}{(4\l^2-\b^2\a^2)^2+\b^2\a^2}<0.
\end{eqnarray}
This concludes the proof.
\end{proof}

The next Proposition states that the minimizer of $F_{\b,\k}$ converges exponentially fast to zero away from the point $1$ as $\b$ converges to zero.

\begin{proposition}\label{decroissanceexp}
Let $\k \b< \frac{1}{\pi}$, with $\l$ independent of $\b$ or $\l=o_\b(1)$ and let  $\v_\b$ be the minimizer of $F_{\b,\k}$  in $\mathcal{J}$. Then \eqref {eqdecroissanceexp} holds and
 $\v_\b \rightarrow 0$ in $\mathcal{C}^\infty_{\text{loc}}([0,1))$.
\end{proposition}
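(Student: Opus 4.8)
The plan is to establish the pointwise bound \eqref{eqdecroissanceexp} first, by an ODE comparison, and then to read off the $\mathcal{C}^\infty_{\text{loc}}$ convergence from it by bootstrapping on the Euler--Lagrange equation and its first integral. Everything below applies uniformly to both cases ($\l$ fixed and $\l=o_\b(1)$), since it only uses $\v_\b'(0)>0$, the monotonicity of $\v_\b$, the first integral \eqref{integrale1eree}, and the bound $\v_\b(1)<\frac\pi2$ of Proposition \ref{valuephi1}, all valid for $\b$ small.

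\textbf{The exponential bound.} I would first observe that the right-hand side of \eqref{eqdecroissanceexp} is exactly the solution $\bar\psi$ of the first order problem $\bar\psi'=\frac1\b\sin\bar\psi$ on $(0,1)$ with terminal value $\bar\psi(1)=\v_\b(1)$: setting $H=\tan(\bar\psi/2)$ linearizes this into $H'=H/\b$, whose solution is $H(x)=\tan(\tfrac{\v_\b(1)}2)e^{(x-1)/\b}$. On the other hand, writing $\a:=\v_\b'(0)>0$, the first integral \eqref{integrale1eree} together with $0\le\v_\b\le\v_\b(1)<\pi/2$ gives
\[
\v_\b'(x)=\frac1\b\sqrt{\sin^2\v_\b(x)+\b^2\a^2}>\frac1\b\sin\v_\b(x).
\]
In terms of $h:=\tan(\v_\b/2)$, using $\sin\v_\b=2h/(1+h^2)$, this reads $h'>h/\b$ on $[0,1]$. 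Since $h(1)=H(1)$ and $H>0$, the quotient $q:=h/H$ satisfies $q'=(h'-h/\b)/H>0$ with $q(1)=1$, hence $q<1$ on $[0,1)$, i.e. $h<H$, i.e. $\v_\b<\bar\psi$ on $[0,1)$ (with equality at $1$). This is precisely \eqref{eqdecroissanceexp}.

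\textbf{Convergence.} From $\arctan t\le t$ and $\tan(\v_\b(1)/2)<\tan(\pi/4)=1$, \eqref{eqdecroissanceexp} gives $\v_\b(x)\le 2e^{(x-1)/\b}$, so on a compact $K\subset[0,1-\eta]$ one gets $\|\v_\b\|_{L^\infty(K)}\le 2e^{-\eta/\b}\to0$, which is the $\mathcal{C}^0$ convergence. For the derivatives the only difficulty is the factor $1/\b^2$ in \eqref{Equationsimplifie}, and the whole point is that the exponential smallness of $\v_\b$ beats every negative power of $\b$. I would first show that $\a=\v_\b'(0)$ is itself exponentially small: starting from the identity $g(\a)=\b\int_0^{\v_\b(1)}(\sin^2 y+\b^2\a^2)^{-1/2}\,dy=1$ from the uniqueness proof and bounding $\sin y\ge\frac2\pi y$ on $[0,\pi/2]$, the integral is at most $\frac\pi2\ln(C/(\b\a))$, which forces $\b\a\le Ce^{-2/(\pi\b)}$, hence $\a\le\frac C\b e^{-2/(\pi\b)}\to0$. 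Then \eqref{integrale1eree} yields $\|\v_\b'\|_{L^\infty(K)}^2\le\frac1{\b^2}\|\v_\b\|_{L^\infty(K)}^2+\a^2\to0$, and differentiating \eqref{Equationsimplifie} repeatedly writes $\v_\b^{(k)}$ as $\b^{-2}$ times a polynomial in $\sin\v_\b,\cos\v_\b$ and $\v_\b',\dots,\v_\b^{(k-1)}$; in each monomial at least one factor is either $\sin\v_\b$ or a derivative $\v_\b^{(j)}$ ($j\ge1$), all exponentially small, so an easy induction gives $\|\v_\b^{(k)}\|_{L^\infty(K)}\le C_k\b^{-m_k}e^{-c_k/\b}\to0$, establishing $\v_\b\to0$ in $\mathcal{C}^\infty(K)$.

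The main obstacle is the last paragraph: one must verify that the decay is genuinely uniform on $K$ and strong enough to absorb the powers of $1/\b$ coming both from the equation and from the size of $\a$. The quantitative estimate $\b\a\le Ce^{-2/(\pi\b)}$ is the crux, and it is what guarantees that even the derivatives at the left endpoint $x=0$ (where $\v_\b^{(2j+1)}(0)$ is governed by $\a$) tend to $0$; without it one would only control $\v_\b$ strictly inside $(0,1)$.
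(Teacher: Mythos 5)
Your proof is correct, and its second half takes a genuinely different route from the paper's. For the exponential bound \eqref{eqdecroissanceexp} the two arguments coincide in substance: the paper integrates the differential inequality $\v_\b'/\sin\v_\b>1/\b$ from $x$ to $1$, while you recast it as a monotone-quotient comparison with the solution of $\bar\psi'=\sin\bar\psi/\b$ having terminal value $\v_\b(1)$; this is only a cosmetic difference. The real divergence is in how $\a=\v_\b'(0)$ is controlled, which is the crux of the $\mathcal{C}^\infty_{\text{loc}}$ convergence. The paper uses energy equipartition: combining the matched upper and lower bounds on $F_{\b,\k}(\v_\b)$ (accurate to $o_\b(\b^n)$ for all $n$, from Propositions \ref{DLenergie1} and \ref{valuephi1}), it deduces $\int_0^1\left(\v_\b'-\tfrac{\sin\v_\b}{\b}\right)^2=o_\b(\b^n)$, then bounds the integrand from below by $c\,\b^2\a^4$ via \eqref{integrale1eree}, so that $\a$ decays faster than any power of $\b$. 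You instead use the time-map identity $\b\int_0^{\v_\b(1)}(\sin^2 y+\b^2\a^2)^{-1/2}\,dy=1$ (which the paper exploits only to prove uniqueness) together with the elementary bound $\sin y\ge 2y/\pi$ on $[0,\pi/2]$, obtaining the explicit estimate $\b\a\le Ce^{-2/(\pi\b)}$. Your route is more elementary, in that it needs nothing from the energy expansion beyond $\v_\b(1)<\pi/2$, and more quantitative, giving exponential rather than merely superpolynomial decay of $\a$; the paper's route stays entirely inside the Modica--Mortola machinery it has already set up, so it costs no additional computation. Either control of $\a$ is strong enough for the bootstrap (superpolynomial suffices), and your closing observation that the derivatives at $x=0$ are governed by quantities of the form $\a/\b^{2j}$ makes explicit precisely the point the paper leaves implicit in its appeal to a ``classical bootstrap argument''.
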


\begin{proof}
 It follows from \eqref{integrale1eree} that for every $x$ in $[0,1)$ we have:
\begin{eqnarray}
\v'(x)^2 > \frac{\sin^2 \v(x)}{\b^2}  \text{ and } \v'(x)> \frac{| \sin \v(x)|}{\b}. \nonumber
\end{eqnarray}
Since $0<\v(x) < \frac{\pi}{2}$, for $0<x\leq 1$ and for $\b$ small enough, from Proposition \ref{valuephi1}, we can say that $\frac{\v'(x)}{\sin \v(x)} > \frac{1}{\b}$ for every $x$ in $(0,1)$. Integrating this relation between $x$ and $1$ yields
\begin{eqnarray}
 \log \frac{\tan (\v(1)/2)}{\tan(\v(x)/2)} > \frac{1-x}{\b}  \nonumber \\
 \Rightarrow  \tan \frac{\v(x)}{2} < \tan \frac{\v(1)}{2} e^{\frac{x-1}{\b}}
\end{eqnarray}
for every $x$ in $[0,1)$ and this implies \eqref{eqdecroissanceexp}. To deduce that $\v$ converges to zero in $\mathcal{C}^\infty_{\text{loc}}([0,1))$ we observe that from the first Equation of \eqref{Equationsimplifie}, we have that $\v''$ tends to zero in $\mathcal{C}^0(K)$ for every compact set $K \subset[0,1)$. Let us show that $\varphi'(0)$ converges to zero.  From \eqref{num1} and \eqref{eq:valuephi1} we find that for all $n$ in $\mathbb{N}$:
\begin{eqnarray}
 \frac{1-\sqrt{1-4\l^2}}{4\b}+o_\b(\b^n)&=& \frac{1-\cos \varphi(1)}{4\b}= \frac{1}{4\b}\int_0^1 |\varphi'||\sin\varphi| \nonumber \\
 &\leq & \frac{1}{8}\int_0^1\varphi'^2+\frac{1}{8\b^2}\int_0^1\sin^2\varphi \nonumber \\
 & \leq & \frac{1-\sqrt{1-4\l^2}}{4\b}+o_\b(\b^n). \nonumber
\end{eqnarray}
In particular we find that for all $n$ in $\mathbb{N}$ we have
\begin{equation}\label{varphi'}
\int_0^1\left(\varphi'-\frac{\sin \varphi}{\b} \right)^2=o_\b(\b^n).
\end{equation}
Now, by using \eqref{integrale1eree}, we find that
\begin{equation}\label{varphi'2}
\left(\varphi'(x)-\frac{\sin \varphi(x)}{\b} \right)^2 \geq \frac{\b^2 \varphi'(0)^4}{\sqrt{1+\b^2\varphi'(0)^2}+1}\geq \b^2\varphi'(0)^4(\frac{1}{2}+o_\b(1)).
\end{equation}
By using \eqref{varphi'} and \eqref{varphi'2} we find $\varphi'(0) \rightarrow 0 $ as $\b$ tends to zero. This  implies that $\varphi'$ converges to zero in $\mathcal{C}^0(K)$ for every compact set $K \subset[0,1)$. Then a classical bootstrap argument allows us to infer that  $\v$ converges to zero in $\mathcal{C}^\infty_{\text{loc}}([0,1))$.
\end{proof}

Now that we know the behaviour of the minimizer on every compact set of $[0,1)$ we study the shape of the minimizer near the point 1. We begin with the case $\k \b=o_\b(1)$.

\begin{proposition}
Let us assume that $\k \b =o_\b(1)$, then for $\b$ small enough, the minimizer $\v_\b$ of $F_{\b,\k}$ satisfies $\|\v_\b\|_{L^\infty([0,1])} \leq C\k \b,$  for some  $C>0$  independent of $\b$.
We set $\Phi_\b(x):=\frac{\v_\b(1-\b x)}{2\k \b}$, defined in $[0,\frac{1}{\b})$. We have that $\Phi_\b \rightarrow \Phi_0:= e^{-x}$ in $\mathcal{C}^\infty_{\text{loc}}([0,+\infty)]$.
\end{proposition}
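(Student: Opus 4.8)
The plan is to establish the two claims in order: first the uniform $L^\infty$ bound $\|\v_\b\|_{L^\infty} \leq C\k\b$, and then the convergence of the rescaled profile $\Phi_\b$ to $e^{-x}$. Throughout I write $\v = \v_\b$ and $\a = \v'(0)$, and I freely use the earlier results: $\v$ is increasing with $0 \leq \v \leq \v(1)$, the first integral \eqref{integrale1eree}, the boundary value $\v(1) = 2\k\b(1+o_\b(1))$ from Proposition \ref{valuephi1}, and the slope relation $\v(1) = \arcsin(4\l^2 - \b^2\a^2)$ together with $4\l^2 - \b^2\a^2 \geq 0$ derived in the uniqueness proof.

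For the $L^\infty$ bound, since $\v$ is increasing we have $\|\v_\b\|_{L^\infty([0,1])} = \v_\b(1)$, and Proposition \ref{valuephi1} already gives $\v_\b(1) = 2\k\b(1+o_\b(1))$, so the bound $\v_\b(1) \leq C\k\b$ for $\b$ small is immediate. For the profile convergence I would change variables to $\Phi_\b(x) = \v_\b(1-\b x)/(2\k\b)$ on $[0,1/\b)$. From the first integral \eqref{integrale1eree}, writing $\sin^2\v = \b^2\a^2 + \b^2\v'^2 - \b^2\a^2$ is circular, so instead I compute $\Phi_\b'(x) = -\v_\b'(1-\b x)/(2\k)$ and use \eqref{integrale1eree} to get
\begin{equation*}
\Phi_\b'(x)^2 = \frac{\v_\b'(1-\b x)^2}{4\k^2} = \frac{\sin^2 \v_\b(1-\b x) + \b^2\a^2}{4\k^2\b^2}.
\end{equation*}
Because $\v_\b = 2\k\b\,\Phi_\b$ is of order $\k\b = \l = o_\b(1)$ on the whole interval, I expand $\sin^2\v_\b = \v_\b^2(1+o_\b(1)) = 4\k^2\b^2\Phi_\b^2(1+o_\b(1))$, and since $\b^2\a^2/(4\k^2\b^2) = \a^2/(4\k^2)$ with $\a = \v'(0) \to 0$ (Proposition \ref{decroissanceexp}, and in fact $\a^2/(4\k^2) \to 0$ must be checked via the first integral evaluated at $0$), the equation becomes $\Phi_\b'(x)^2 = \Phi_\b(x)^2(1 + o_\b(1))$. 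Together with the normalization $\Phi_\b(0) = \v_\b(1)/(2\k\b) \to 1$ and monotonicity forcing the decaying branch, this identifies the limit ODE as $\Phi_0' = -\Phi_0$, $\Phi_0(0)=1$, whose solution is $e^{-x}$.

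The main obstacle will be passing rigorously to the limit in the rescaled Euler--Lagrange equation and upgrading weak convergence to $\mathcal{C}^\infty_{\text{loc}}$. Concretely, I must control the ratio $\a^2/(4\k^2)$: from \eqref{integrale1eree} at $x=0$ one has $\a^2 = \a^2$ trivially, so instead I use $\v(1) = \arcsin(4\l^2 - \b^2\a^2)$ with $\v(1) = 2\k\b(1+o_\b(1))$, giving $4\l^2 - \b^2\a^2 = \sin(2\k\b(1+o_\b(1))) = 2\k\b(1+o_\b(1))$, hence $\b^2\a^2 = 4\l^2 - 2\l(1+o_\b(1))$; this needs careful bookkeeping since $\l = \k\b \to 0$ makes $4\l^2$ negligible against $2\l$, which would force $\b^2\a^2 < 0$ unless the $o_\b(1)$ terms are tracked precisely. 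I therefore expect the delicate point to be a refined expansion of $\v_\b(1)$ to second order in $\l$, sharpening Proposition \ref{valuephi1}, so that $\a^2/(4\k^2) \to 0$ can be certified. Once the equation $\Phi_\b'^2 = \Phi_\b^2(1+o_\b(1))$ and uniform $H^1_{\text{loc}}$ bounds on $\Phi_\b$ are in hand, I extract a locally uniform limit, pass to the limit in the ODE, identify $\Phi_0 = e^{-x}$ by uniqueness, and conclude $\mathcal{C}^\infty_{\text{loc}}$ convergence by a standard bootstrap using the smooth dependence on the right-hand side.
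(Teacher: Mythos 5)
Your route to the profile convergence is genuinely different from the paper's, and it has one real gap at its crux. The paper never touches the quantity $\a=\v_\b'(0)$: it works with the second-order rescaled Euler--Lagrange equation, under which $\Phi_\b''=\cos(2\k\b\Phi_\b)\,\frac{\sin(2\k\b\Phi_\b)}{2\k\b}$ comes with \emph{exact} Cauchy data at the rescaled origin, namely $\Phi_\b(0)=\frac{\v_\b(1)}{2\k\b}\to 1$ (Proposition \ref{valuephi1}) and $\Phi_\b'(0)=-\frac{\v_\b'(1)}{2\k}=-1$ directly from the Neumann condition; an $H^1_{\text{loc}}$ bound from the energy expansion \eqref{DL22} then allows passing to the limit in the equation, giving $\Phi_0''=\Phi_0$, and the Cauchy data pin down $e^{-x}$. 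Your first-integral route instead makes everything hinge on showing $\a^2/(4\k^2)\to 0$, and this is precisely the step you cannot complete: as you yourself observe, your computation yields $\b^2\a^2=4\l^2-2\l(1+o_\b(1))<0$, an absurdity.

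The source of that absurdity is that the relation you quote, $\v_\b(1)=\arcsin[4\l^2-\b^2\a^2]$, is mis-stated (a typo in the paper's uniqueness proof): evaluating \eqref{integrale1eree} at $x=1$ with $\v_\b'(1)=2\k$ gives $4\k^2=\frac{\sin^2\v_\b(1)}{\b^2}+\a^2$, that is
\[
\sin^2\v_\b(1)=4\l^2-\b^2\a^2 ,
\]
so it is the \emph{square} of the sine that equals $4\l^2-\b^2\a^2$. With the correct identity your gap closes in one line, and no second-order refinement of Proposition \ref{valuephi1} is needed: since $\v_\b(1)=2\l(1+o_\b(1))$ and $\l\to 0$, one has $\sin^2\v_\b(1)=4\l^2(1+o_\b(1))$, hence $\b^2\a^2=4\l^2-\sin^2\v_\b(1)=o_\b(\l^2)$, i.e.\ $\a^2/(4\k^2)=o_\b(1)$. (Your proposed remedy --- sharpening the expansion of $\v_\b(1)$ to second order in $\l$ --- is a red herring; the needed input is algebraic, not asymptotic.) Once this is in place, your scheme does work and is arguably more elementary than the paper's: since $\Phi_\b'\leq 0$, the first integral reads $\Phi_\b'=-\sqrt{\Phi_\b^2(1+o_\b(1))+\a^2/(4\k^2)}$, so a locally uniform limit of $\Phi_\b$ (available from the uniform bound and the resulting equi-Lipschitz property, or from the paper's $H^1$ bound) automatically upgrades to $\mathcal{C}^1_{\text{loc}}$ convergence, identifying $\Phi_0'=-\Phi_0$, $\Phi_0(0)=1$, hence $\Phi_0=e^{-x}$ by ODE uniqueness; the $\mathcal{C}^\infty_{\text{loc}}$ statement then follows by bootstrapping on the second-order equation, as you indicate. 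Your treatment of the $L^\infty$ bound is identical to the paper's.
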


Before proceeding to the proof of this proposition, we remark that, in general, although $\v_\b$ converges to zero in $\mathcal{C}^0([0,1])$, we do not have $\mathcal{C}^1$ convergence of $\v_\b$ on $[0,1]$ since $\v_\b'(1)=2\k$.

\begin{proof}
Since $\v_\b$ is increasing in $[0,1]$ and since $\v_\b(1)=2\k \b (1+o_\b(1))$ from Proposition \ref{valuephi1}, we deduce that $\|\v_\b\|_{L^\infty([0,1])} \leq C\k \b$ for $\b$ small enough, with $C$ independent of $\b$ and $\k$.
Now we observe that $\Phi_\b=\frac{\v_\b(1-\b x)}{2\k \b}$ satisfies:
\begin{equation}\label{equationsurpsi}
\left\{
\begin{array}{lcll}
\Phi_\b''&=&\cos(2\k \b \Phi_\b) \frac{\sin(2\k \b \Phi_\b)}{2\k\b} \text{ in } (0,\frac{1}{\b}), \\
\Phi_\b(0)&=&\frac{\v_\b(1)}{2\k \b}, \\
\Phi_\b'(0)&=&1.
\end{array}
\right.
\end{equation}
Besides,
\[ \int_0^{\frac{1}{\b}} \Phi_\b '(x)^2dx= \frac{1}{2\k^2}\int_0^{\frac{1}{\b}} \v_\b '(1-\b x)^2dx=\frac{1}{2\k^2 \b}\int_0^1 \v_\b '(y)^2\ dy.\]
We recall that $F_{\b,\k}(\v_\b)=\frac{-\k^2 \b}{2}(1+o_\b(1))$ and $\v_\b(1)=2\k \b(1+o_\b(1))$ if $\k \b=o_\b(1)$. From this, we deduce that $\int_0^1 \v_\b'(y)^2\ dy \leq C \k^2 \b$ and $\int_0^{\frac{1}{\b}} \Phi'_\b(x)^2dx \leq C$. Thus, since $\Phi_\b(0) \rightarrow 1$, $\Phi_\b$ is bounded in $H^1_{\text{loc}}(\R^+)$. From the Sobolev injections, up to a subsequence in
$\b$, we have $\Phi_\b \rightarrow \Phi_0$ in $\mathcal{C}^0_{\text{loc}}([0,+\infty))$ for some $\Phi_0$ in  $\mathcal{C}^0_{\text{loc}}([0,+\infty))$. We also have that $2\k \b \Phi_\b \rightarrow 0$ in $\mathcal{C}^0_{\text{loc}}([0,+\infty)$ and $\frac{\sin(2\k \b \Phi_\b)}{2\k \b \Phi_\b}\rightarrow 1$ in $\mathcal{C}^0_{\text{loc}}([0,+\infty)$. Therefore, we can pass to the limit in \eqref{equationsurpsi} and find that $\Phi_0$ satisfies
\[\Phi_0''=\Phi_0 \ \text{ in } \R^+.\]
Since $\Phi''_\b$ is bounded in $L^\infty_{\text{loc}}([0,+\infty))$ we have that
$\Phi_\b$ converges to $\Phi_0$ in $\mathcal{C}^1_{\text{loc}}([0,+\infty))$. In particular,  $\Phi_0(0)=1$ and $\Phi_0'(0)=1$, that is $\Phi_0(x)=e^{-x}$. By uniqueness of the limit, the entire sequence converges and by using a bootstrap argument we can show that the convergence holds in $\mathcal{C}^\infty_{\text{loc}}([0,+\infty))$.
\end{proof}

We now study the case where $\l=\k \b$ is independent of $\b$ and $\l>\frac{1}{\pi}$. Note that in this case $\v_\b$ does not converge to zero in $\mathcal{C}^0([0,1])$ since $\v_\b(1)=2\arcsin(2\l)+o_\b(1)$ from Proposition \ref{valuephi1}. The transition from 0 to $2\arcsin(2\l)$ takes place in a boundary layer of size $\beta$:

\begin{proposition}\label{blowblowup}
Let $\v_\b$ be the minimizer of $F_{\b,\k}$ in $\mathcal{J}$. We set $\psi_\b(x):=\v_\b(1-\b x)$ defined in $[0,\frac{1}{\b}]$. We then have that $\psi_\b \rightarrow \psi_0$ in $\mathcal{C}^\infty_{\text{loc}}([0,+\infty))$ where $\psi_0(x)=2\arctan \left[ \tan \left(\frac{\arcsin(2\l)}{2} \right)e^{-x}\right]$.
\end{proposition}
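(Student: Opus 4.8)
The plan is to identify $\psi_\b(x) = \v_\b(1-\b x)$ as an almost-exact solution of the autonomous ODE $\psi'' = \sin\psi\cos\psi$ and then pass to the limit using the uniform bounds we already control. First I would compute the equation satisfied by $\psi_\b$: from the Euler--Lagrange equation \eqref{Equationsimplifie}, namely $\v'' = \frac{1}{\b^2}\sin\v\cos\v$, the chain rule gives $\psi_\b'' = \sin\psi_\b\cos\psi_\b$ on $(0,\tfrac{1}{\b})$, which is already $\b$-independent, so there is no singular rescaling to worry about. The boundary data transfer as $\psi_\b(0) = \v_\b(1) = \arcsin(2\l) + o_\b(\b^n)$ by Proposition \ref{valuephi1}, and $\psi_\b'(0) = -\b\,\v_\b'(1) = -\b\cdot 2\k = -2\l$ using the Neumann condition $\v'(1)=2\k$ together with $\l=\k\b$. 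Thus $\psi_\b$ solves an ODE whose coefficients do not depend on $\b$ and whose initial data converge to $\psi_0(0) = \arcsin(2\l)$ and $\psi_0'(0) = -2\l$.

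Next I would establish enough compactness to extract a limit. Since $\v_\b$ is increasing with $0<\v_\b<\pi/2$, the function $\psi_\b$ is decreasing and bounded in $L^\infty$ uniformly in $\b$. For the $H^1_{\text{loc}}$ bound I would use the first integral \eqref{integrale1eree}: changing variables, $\int_0^M \psi_\b'(x)^2\,dx = \b\int_{1-\b M}^1 \v_\b'(y)^2\,dy$, and \eqref{integrale1eree} gives $\v_\b'^2 = \frac{1}{\b^2}\sin^2\v_\b + \v_\b'(0)^2$; since $\v_\b'(0)\to 0$ (Proposition \ref{decroissanceexp}) and $\sin^2\v_\b \le \b^2\psi_\b'(0)^2/\!\cdots$ is $O(1)$ after rescaling, this yields a bound on $\int_0^M \psi_\b'^2$ independent of $\b$ on each compact $[0,M]$. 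With $\psi_\b$ bounded in $H^1_{\text{loc}}$ and $\psi_\b(0)\to\arcsin(2\l)$, Sobolev embedding gives, up to a subsequence, $\psi_\b\to\psi_0$ in $\mathcal{C}^0_{\text{loc}}([0,+\infty))$.

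Then I would pass to the limit in the ODE. Because the right-hand side $\sin\psi_\b\cos\psi_\b$ is uniformly bounded and continuous, $\psi_\b''$ is bounded in $L^\infty_{\text{loc}}$, so the $\mathcal{C}^0$ convergence upgrades to $\mathcal{C}^1_{\text{loc}}$ convergence of $\psi_\b$, and in the limit $\psi_0$ solves
\begin{equation}\nonumber
\psi_0'' = \sin\psi_0\cos\psi_0, \quad \psi_0(0)=\arcsin(2\l), \quad \psi_0'(0)=-2\l.
\end{equation}
I would then verify directly that the claimed function $\psi_0(x)=2\arctan\!\big[\tan(\tfrac{\arcsin(2\l)}{2})e^{-x}\big]$ is exactly the solution of this Cauchy problem: it is the heteroclinic/boundary-layer profile satisfying the first-order equation $\psi_0'=-\sin\psi_0$ (whose square differentiates to the second-order equation), with $\psi_0(0)=\arcsin(2\l)$ and $\psi_0'(0)=-\sin(\arcsin 2\l)=-2\l$, matching both initial conditions. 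By Cauchy--Lipschitz the limit is unique, so the whole sequence converges, and a standard bootstrap using the smoothness of the nonlinearity promotes the convergence to $\mathcal{C}^\infty_{\text{loc}}([0,+\infty))$.

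The main obstacle I anticipate is the uniform $H^1_{\text{loc}}$ bound on $\psi_\b$, specifically controlling $\int_0^M\psi_\b'^2$ independently of $\b$: one must argue that the rescaled energy concentrates in the boundary layer and that, away from the point $1$, $\v_\b$ together with $\v_\b'(0)$ is small enough (via Proposition \ref{decroissanceexp} and the exponential decay \eqref{eqdecroissanceexp}) that no mass escapes to infinity in the blow-up. Once compactness is secured, identifying the limit is routine because the limiting ODE is autonomous with prescribed initial data and the explicit $\psi_0$ satisfies them.
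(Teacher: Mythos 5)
Your proof is correct, and it identifies the limit by a genuinely different mechanism than the paper's proof of this proposition. The paper never invokes uniqueness for the Cauchy problem here: it gets compactness from the global energy bound \eqref{boundeontheblowup} (a consequence of the expansions in Propositions \ref{DLenergie1} and \ref{valuephi1}), passes to the limit in the equation, and then uses lower semicontinuity of that bound to force $\psi_0'\to 0$ and $\sin\psi_0\to 0$ at infinity; this pins the constant in the first integral $\psi_0'^2=\sin^2\psi_0+C$ to $C=0$, after which monotonicity, the confinement $0\le\psi_0<\frac{\pi}{2}$, and the initial values give the explicit formula. You instead observe that both pieces of Cauchy data converge --- $\psi_\b(0)=\v_\b(1)\to\arcsin(2\l)$ by Proposition \ref{valuephi1}, and $\psi_\b'(0)=-\b\v_\b'(1)=-2\l$ exactly, by the Neumann condition --- and that the claimed profile solves the same second-order Cauchy problem (the identity $(2\l)^2=\sin^2\left(\arcsin(2\l)\right)$ is precisely what places this data on the heteroclinic orbit), so Cauchy--Lipschitz for the globally Lipschitz nonlinearity $\tfrac12\sin(2u)$ identifies the limit, and uniqueness of the limit upgrades subsequential to full convergence. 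Your route is shorter: it needs neither the global energy bound nor any analysis at infinity, and the compactness step is even cheaper than you make it, since \eqref{integrale1eree} gives the pointwise identity $\psi_\b'(x)^2=\sin^2\psi_\b(x)+\b^2\v_\b'(0)^2$ with $\b^2\v_\b'(0)^2\le 4\l^2$ (evaluate \eqref{integrale1eree} at $x=1$), hence $|\psi_\b'|\le\sqrt{1+4\l^2}$ and $|\psi_\b''|\le\tfrac12$ uniformly, so Arzel\`a--Ascoli yields $\mathcal{C}^1_{\text{loc}}$ precompactness directly; the $H^1_{\text{loc}}$ paragraph in your write-up, whose middle inequality is garbled, could simply be deleted. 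What the paper's variational identification buys is robustness and extra information: it shows by itself that the limit is the finite-energy, decaying connection, a pattern that survives in settings where one cannot prescribe and pass to the limit in two initial conditions. Interestingly, the paper itself switches to your argument in Section \ref{2}, where the analogous limits for the full energy $G_{\e,\d,\k}$ are identified exactly as you do, by passing to the limit in the Cauchy data and quoting uniqueness of the solution with those boundary conditions.
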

\begin{proof}
The function $\psi_\b=\v_\b(1-\b x)$ satisfies
\begin{equation}\label{blowup1}
    \left\{
    \begin{array}{lcll}
    \psi_\b''&=&\sin \psi_\b \cos \psi_\b \text{ in } (0,\frac{1}{\b}), \\
    \psi_\b'(0)&=&-2\l, \\
    \psi_\b(\frac{1}{\b})&=&0.
    \end{array}
    \right.
    \end{equation}
We have that $F_{\b,\k}(\v_\b)= \frac{1}{8\b}\int_0^{\frac{1}{\b}}\left(\psi_\b'(x)^2+\sin^2\psi_\b(x)\right) dx-\frac{\l}{2\b}\psi_\b(0)$. From Proposition \ref{DLenergie1} and \ref{valuephi1}, we deduce that
\begin{equation}
\frac{1}{8\b}\int_0^{\frac{1}{\b}}\left(\psi_\b'(x)^2+\sin^2\psi_\b(x)\right) dx-\frac{\l}{2\b}\psi_\b(0)=\frac{1-\sqrt{1-4\l^2}}{4\b}-\frac{\l}{2\b}\arcsin(2\l)+o_\b(1),
\end{equation}
and $\psi_\b(0)=\v_\b(1)=\arcsin(2\l)+o_\b(1)$.
Hence
\begin{equation}\label{boundeontheblowup}
\int_0^{\frac{1}{\b}}\left(\psi_\b'(x)^2+\sin^2\psi_\b(x)\right) dx \leq 2( 1-\sqrt{1-4\l^2})+o_\b(\b).
\end{equation}
This proves that $\psi_\b$ is bounded in $H^1_\text{loc}(\R^+)$ and hence converges weakly  to some $\psi_0 \in H^1_{\text{loc}}(\R^+)$, up to a subsequence in $\b$. Note that this also implies that the convergence is uniform on every compact set of $\R^+$. Now using \eqref{boundeontheblowup} and some lower semi-continuity result, we obtain
\begin{equation}\label{boundonthelimit}
\int_0^{M} \psi_0'(x)^2+\sin^2\psi_0(x) dx \leq C_{\l}
\end{equation}
for every $M>0$  and where $C_{\l}$ is a constant which depends on $\l$. Thus we have
$\int_0^{+\infty} \psi_0'(x)^2+\sin^2\psi_0(x) dx \leq C_{\l}$. The uniform convergence on every compact set allows us to pass to the limit in the sense of distributions in the first equation of \eqref{blowup1}, that is: $\psi_0$ satisfies $\psi_0''=\sin \psi_0\cos \psi_0$ in $\R^+$. Thus from the regularity theory $\psi_0$ is $\mathcal{C}^\infty(\R^+)$. From \eqref{boundonthelimit} we deduce that
\begin{equation}\label{psi0equal0}
\lim_{x\rightarrow +\infty}\psi_0'(x)=0 \ \ \ \text{ and } \lim_{x\rightarrow +\infty} \sin^2 \psi_0(x)= 0.
\end{equation}
The function $\psi_0$ also satisfies $\psi_0'(x)^2=\sin^2 \psi_0(x)+C$ for $x$ in $\R^+$, with $C$ a constant. Equation \eqref{psi0equal0} proves that $C=0$. Thanks to the first equation of \eqref{blowup1} and a bootstrap argument we also have that the convergence is smooth on every compact set of $[0,+\infty)$. In particular $\psi_0'(0)=-2 \l$ and we have that $\psi_0$ is decreasing. Now recall from Proposition \ref{valuephi1} that $0\leq \psi_\b(x) <\frac{\pi}{2}$ for $x$ in $\R^+$. This implies that $\psi_0 '=-\sin \psi_0$, $\psi_0'(0)=-2 \l$ and $\psi_0(0)=\arcsin(2\l)$  and thus $\psi_0(x)=2\arctan \left[ \tan \left(\frac{\arcsin(2\l)}{2} \right)e^{-x}\right]$.
\end{proof}
Theorem \ref{caskinf1/pi} follows from the Propositions of this section.

\begin{proposition}
Let $\k \b=\frac{1}{\pi}$ then \eqref{DL11} still holds.
\end{proposition}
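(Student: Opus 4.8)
The plan is to reuse the matching lower and upper bounds from the proof of Proposition~\ref{DLenergie1} essentially verbatim, the key observation being that the restriction $\l<\frac1\pi$ entered that argument only through the control of the number of transitions $N$, and that this control degenerates harmlessly exactly at $\l=\frac1\pi$.

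First I would invoke the first lower bound \eqref{firstlowerbound},
\[
F_{\b,\k}(\v) \geq \frac{N}{2\b}(1-\l \pi)+\frac{1}{\b}\, f(\v(1)-N\pi),\qquad N=E\!\left(\tfrac{\v(1)}{\pi}\right),
\]
which, as noted just below \eqref{firstlowerbound}, holds for \emph{every} value of $\k$. Setting $\l=\k\b=\frac1\pi$ makes the coefficient $1-\l\pi$ of the $N$-term vanish identically, so the bound collapses to $F_{\b,\k}(\v)\ge \frac1\b f(\v(1)-N\pi)$ with $\v(1)-N\pi\in[0,\pi)$. Since the study of $g$ in the proof of Proposition~\ref{DLenergie1} yields $\l_{\text{crit}}>\frac1\pi$, at $\l=\frac1\pi$ the function $f$ of \eqref{deff} still attains its minimum over $[0,\pi]$ at $x_m=\arcsin(2\l)$, with value $f(x_m)=\frac{1-\sqrt{1-4\l^2}}{4}-\frac{\l}{2}\arcsin(2\l)$. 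Hence $F_{\b,\k}(\v)\ge \frac1\b f(x_m)$, which is precisely the right-hand side of \eqref{DL11} with no error term.

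For the matching upper bound I would simply note that the test function constructed in the upper bound part of Proposition~\ref{DLenergie1} only requires $\arcsin(2\l)$ to be defined, that is $\l\le\frac12$, and $\frac1\pi<\frac12$. Thus the construction and the estimate \eqref{upperbound} carry over unchanged, giving $\min_{\mathcal J}F_{\b,\k}\le \frac1\b f(x_m)+o_\b(\b^n)$ for all $n\in\mathbb N^*$. Combined with the lower bound, this yields \eqref{DL11} at $\l=\frac1\pi$.

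The only genuinely delicate point is conceptual rather than technical. Away from criticality the strictly positive factor $1-\l\pi$ forced $N=0$ and thereby pinned down $\v_\b(1)<\pi$; at $\l=\frac1\pi$ this mechanism disappears, so I can no longer exclude minimizers carrying several (now energy-neutral) transitions, nor identify $\v_\b(1)$. This does not affect the statement, however, because the first lower bound already holds for every admissible $N$ and, once the $N$-term vanishes, the minimum of $f$ over $[0,\pi]$ is insensitive to $N$. So the main obstacle is recognizing that the degeneracy at criticality, while breaking the earlier uniqueness and shape conclusions, leaves the energy expansion \eqref{DL11} intact.
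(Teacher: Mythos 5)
Your proof is correct and is essentially the paper's own argument: the paper's proof simply observes that the lower bound \eqref{lowerbound} and the upper bound \eqref{upperbound} from Proposition \ref{DLenergie1} remain valid at $\l=\frac{1}{\pi}$, which is exactly what you verify in detail (the $N$-term degenerates to zero, $\l_{\text{crit}}>\frac{1}{\pi}$ keeps the minimum of $f$ over $[0,\pi]$ at $\arcsin(2\l)$, and the test function construction only needs $\l\le\frac{1}{2}$). Your closing observation that the control of $N$ and of $\v_\b(1)$ is lost at criticality, while leaving the energy expansion \eqref{DL11} intact, matches the paper's remark immediately following this proposition.
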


\begin{proof}
Indeed coming back to the proof of Proposition \ref{DLenergie1} we find that the lower bound \eqref{lowerbound} and the upper bound \eqref{upperbound} remain true in that case, so does the expansion of the ground state of the energy.
\end{proof}
We are not able to give the behaviour of minimizers of $F_{\b,\k}$ when $\b$ goes to $0$ in the case $\l=1/\pi$ although we suspect that in this case we have a unique minimizer and it has the same behaviour as minimizers for $\l<\frac{1}{\pi}$ (cf.\ Propositions \ref{decroissanceexp} and \ref{blowblowup}). The main difficulty is that if $\l=\frac{1}{\pi}$, then the lower bound \eqref{lowerbound} and the upper bound \eqref{upperbound} do not imply that $N=0$. Thus we could have $\v_\b(1)=N_\b+\arcsin(\frac{2}{\pi})$ with $N_\b$ an integer which can be unbounded as $\b$ goes to $0$.

\subsection{The case $ \k> \frac{1}{\pi \b}$}

In this section, we will see that a change of regime occurs when $\l =\k \b>\frac{1}{\pi }$, in the sense that the  minimizer of $F_{\b,\k}$ makes several transitions from $0$ to $\pi$, from $\pi$ to $2\pi$ etc.  The first step is to prove that a minimizer $\v_\b$ satisfies $\v_\b(1)\geq \pi$. This is true as soon as $\l> \frac{1}{\pi}$ even if $\l$ depends on $\b$.

\begin{lemma}\label{Superieurapi}
Let $\l=\k \b > \frac{1}{\pi}$,  and let $\v_\b$ be a minimizer of $F_{\b,\k}$ in $\mathcal{J}$, then $\v_\b(1) \geq \pi.$
\end{lemma}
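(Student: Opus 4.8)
The plan is to argue by contradiction. Assuming a minimizer $\v_\b$ satisfies $\v_\b(1)<\pi$, I will exhibit a competitor in $\mathcal{J}$ whose energy is strictly smaller, contradicting minimality. First I record what minimality forces under this assumption: if $\v_\b(1)<\pi$ then $N=E(\v_\b(1)/\pi)=0$, so the general lower bound \eqref{firstlowerbound} (valid for any $\k$) reduces to $F_{\b,\k}(\v_\b)\geq \tfrac1\b f(\v_\b(1))$ with $f$ as in \eqref{deff}. Since $f(x)=\tfrac{1-\cos x}{4}-\tfrac{\l x}{2}\geq -\tfrac{\l\pi}{2}$ on $[0,\pi]$, this yields the crude but decisive lower bound
\[
F_{\b,\k}(\v_\b)\geq -\frac{\l\pi}{2\b},
\]
which is only of order $1/\b$.

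The key step is to construct a competitor whose energy is of order $-1/\b^2$. I use the periodic profile attached to the first integral \eqref{integrale1eree}: for a parameter $\mu>0$ let $w$ solve $w'=\tfrac1\b\sqrt{\sin^2 w+\mu^2}$ with $w(0)=0$, defined on all of $[0,1]$. This $w$ is increasing, crosses every multiple of $\pi$, and is periodic with period $T(\mu)=\b P(\mu)$, where $P(\mu)=\int_0^\pi \tfrac{dy}{\sqrt{\sin^2 y+\mu^2}}$. Using the equipartition $w'^2=\tfrac1{\b^2}(\sin^2 w+\mu^2)$ and the change of variables $w'\,dx=dy$, the energy of $w$ over one period equals $e(\mu)/\b$, where
\[
e(\mu)=\frac14\int_0^\pi \sqrt{\sin^2 y+\mu^2}\,dy-\frac{\mu^2}{8}\int_0^\pi \frac{dy}{\sqrt{\sin^2 y+\mu^2}}-\frac{\l\pi}{2}.
\]
The decisive observation is that $e(\mu)\to \tfrac{1-\l\pi}{2}$ as $\mu\to 0^+$, which is strictly negative precisely because $\l>\tfrac1\pi$. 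I therefore fix, independently of $\b$, a small $\mu_0>0$ with $e(\mu_0)<0$.

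I then assemble the comparison. The interval $[0,1]$ contains $N_0:=E(1/T(\mu_0))\sim \tfrac{1}{\b P(\mu_0)}$ complete periods of $w$, so $w(1)\geq N_0\pi\geq\pi$ for $\b$ small, and the energy on these complete periods is $N_0\,e(\mu_0)/\b$. The leftover partial period contributes a positive integral part bounded by one full period's value, say $C(\mu_0)/\b$, and a Neumann part lying in $[-\tfrac{\l\pi}{2\b},0]$; hence
\[
F_{\b,\k}(w)\leq N_0\,\frac{e(\mu_0)}{\b}+\frac{C(\mu_0)}{\b}\leq -\frac{c}{\b^2}
\]
for some $c>0$ and all $\b$ small. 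Combining this with the lower bound gives $-\tfrac{\l\pi}{2\b}\leq F_{\b,\k}(\v_\b)\leq F_{\b,\k}(w)\leq -\tfrac{c}{\b^2}$, which is impossible for $\b$ small. Therefore $\v_\b(1)\geq\pi$.

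The main obstacle is exactly the sign analysis that forces a multi-transition competitor: a single transition from $0$ to $\pi$ does not suffice near the threshold, since the study of $f$ in Proposition \ref{DLenergie1} gives $g(1/\pi)<0$, i.e. staying at the interior minimum $x_m^\b$ is cheaper than reaching $\pi$ once. Only by stacking $\sim 1/\b$ periods of strictly negative per-period energy $e(\mu_0)$ does one gain the order $-1/\b^2$ needed to beat the best sub-$\pi$ configuration; keeping $e(\mu_0)<0$ uniform in $\b$ by choosing $\mu_0$ bounded away from $0$ is what makes the estimate robust. The only additional care arises if $\l$ is allowed to depend on $\b$ and approach $1/\pi$, in which case one lets $\mu_0=\mu_0(\b)\to 0$ slowly so that $e(\mu_0(\b))<0$ while still fitting enough periods.
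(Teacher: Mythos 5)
Your proof is correct, and it takes a genuinely different route from the paper's. The paper keeps the integer $N=E(\v_\b(1)/\pi)$ explicit in the lower bound \eqref{firstlowerbound}, distinguishes the cases $\l\geq \l_{\text{crit}}$ and $\frac1\pi<\l<\l_{\text{crit}}$ arising from the study of $f$ in \eqref{deff}, and in each case builds a competitor with one or two Modica--Mortola transitions (glued truncated heteroclinics, plus a boundary layer in the second case) whose energy at order $1/\b$ beats the $N$-dependent lower bound unless $N\geq 1$. You instead assume $\v_\b(1)<\pi$, observe that this forces $F_{\b,\k}(\v_\b)\geq -\l\pi/(2\b)$, and then produce a competitor of strictly lower order: the increasing solution of $w'=\frac1\b\sqrt{\sin^2 w+\mu_0^2}$, whose energy per quasi-period equals $e(\mu_0)/\b$ with $e(\mu)\to\frac{1-\l\pi}{2}<0$ as $\mu\to 0$ (since $\mu^2P(\mu)\to 0$), so that stacking $\sim 1/(\b P(\mu_0))$ periods gives $F_{\b,\k}(w)\leq -c/\b^2$, a contradiction for $\b$ small. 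The individual steps check out: $w$ is admissible in $\mathcal{J}$, the per-period identity follows from the equipartition $w'^2=(\sin^2 w+\mu_0^2)/\b^2$ and the change of variables $w'\,dx=dy$, and the leftover partial period costs at most $C(\mu_0)/\b$ by quasi-periodicity. Your competitor is essentially the one the paper introduces only later, for the upper bound in Proposition \ref{propcask>1surpi} (the solution of \eqref{defsurR}), but with a fixed suboptimal slope $\mu_0$ in place of the optimal ${\tilde{\a}}_0$ of \eqref{defalpha0}. What your route buys: no case distinction involving $\l_{\text{crit}}$, no gluing or matching estimates, and a contradiction between different orders of magnitude ($1/\b$ versus $1/\b^2$) rather than between constants at the same order; it also displays the mechanism---many cheap transitions---that Theorem \ref{casksup1/pi} later confirms is the true behaviour of minimizers. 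What the paper's route buys: working at the finer $1/\b$ scale, it degrades more gracefully if $\l$ is allowed to approach $1/\pi$ with $\b$, whereas your argument then requires $\mu_0(\b)\to 0$ with $\l\pi-1$ dominating roughly $\b\log(1/\mu_0)$, a caveat you correctly flag. Note that both proofs, as written, yield the conclusion for $\b$ small enough, which is the regime in which the paper operates.
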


\begin{proof}
Let us call $N=E(\frac{\v_\b(1)}{\pi})$. We recall that $f$ given by \eqref{deff} satisfies from \eqref{firstlowerbound} that:
if $\frac{1}{\pi}<\l <\l_{\text{crit}}$ then $f(x)\geq \frac{1-\sqrt{1-4\l^2}}{4}-\frac{\l}{2}\arcsin(2 \l)$ whereas if $\l \geq \k_{\text{crit}}$, then $f(x)\geq \frac{1-\l \pi}{2}$.
\begin{itemize}
\item[1)] If $\l \geq \l_{\text{crit}}$,  then
\[F_{\b,\k}(\v_\b)\geq \frac{N+1}{2\b}(1-\l \pi). \]
We claim that we can construct a sequence ${(\psi_\b)}_\b$ such that $\psi_\b$ is in $\mathcal{J}$ and $\limsup_{\b \rightarrow 0} \beta F_{\b,\k}(\psi_\b) \leq (1-\l \pi)$. This will imply that $N\geq 1$.
For the construction of such a sequence we refer to \cite{Modica1987} or p.106-107 of \cite{Braides2002} and we just sketch the argument here. Let $\v_0$ be the solution of $\v_0'=\sin \v_0$ such that $\v_0(-\infty)=0$ and $\v_0(+\infty)=\pi$. This solution is the minimizer of
\[ \min\{ \int_{-\infty}^{+\infty} \left(\v'(x)^2+\sin^2 \v(x)\right)dx; \ \v(-\infty)=0 \text{ and } \v(+\infty)=\pi \}.\]
Besides it satisfies $\int_{-\infty}^{+\infty} \left(\v_0'(x)^2+\sin^2 \v_0(x)\right)dx=4$.
Now we let $T>0$ and $\v_T$ be a minimizer of
\[ \min\{ \int_{-T}^{+T} \left(\v'(x)^2+\sin^2 \v(x)\right)dx; \ \v(-T)=0 \text{ and } \v(+T)=\pi \}.\]
We have that $\v_T \rightarrow \v_0$ in $H^1_{\text{loc}}(\R)$  and
\[  \int_{-T}^{+T} \left(\v_T'(x)^2+\sin^2 \v_T(x)\right)dx \rightarrow \int_{-\infty}^{+\infty} \left(\v_0'(x)^2+\sin^2 \v_0(x)\right)dx\]
as $T \rightarrow +\infty$.
To construct our sequence we choose $0<t_1<t_2<1$ and we set
\begin{equation}
\psi_\b(x):= \begin{cases}
0 & \text{ if }  0\leq x < t_1-\b T,  \\
\v_T(\frac{x-t_1}{\b}) & \text{ if }  t_1-\b T \leq x \leq t_1+\b T, \\
\pi & \text{ if }  t_1+\b T < x <t_2-\b T, \\
\v_T(\frac{x-t_2}{\b})+\pi & \text{ if }  t_2-\b T \leq x \leq t_2+\b T, \\
2\pi & \text{ if } t_2+\b T <x \leq 1.
\end{cases}
\end{equation}
We then have that

\begin{eqnarray}
\b F_{\b,\k}(\psi_\b)= \frac{1}{8}\int_{\{|x-t_1|\leq \b T \}}\b \v_T'^2(\frac{x-t_1}{\b})+\frac{1}{\b} \sin^2 \v_T(\frac{x-t_1}{\b})dx \nonumber \\
\phantom{aaaaa}+\frac{1}{8}\int_{\{|x-t_2|\leq \b T \}} \b \v_T'^2(\frac{x-t_2}{\b})+\frac{1}{\b} \sin^2 \v_T(\frac{x-t_2}{\b})dx -\l \pi. \nonumber
\end{eqnarray}
We then make a change of variable $y=\frac{x-t_1}{\b}$ (or $y=\frac{x-t_2}{\b}$) and we obtain
\begin{eqnarray}
\b F_{\b,\k}(\psi_\b)= \frac{1}{8}\int_{\{|y|\leq  T \}} \left(\v_T'^2(y)+ \sin^2 \v_T(y)\right)\ dy \nonumber \\
\phantom{aaaa}+\frac{1}{8}\int_{\{|y|\leq T\}} \left(\v_T'^2(y)+ \sin^2 \v_T(y)\right)\ dy -\l \pi. \nonumber
\end{eqnarray}
We take $T\rightarrow +\infty$ (but keeping in mind that $\b T$ must satisfy $1-t_2<\b T$) and that proves that $\limsup_{\b \rightarrow 0} \beta F_{\b,\k}(\psi_\b) \leq (1-\l \pi)$.

\item[2)] If $\frac{1}{\pi}< \l < \l_{\text{crit}}$ then \eqref{firstlowerbound} implies
\[ F_{\b,\k}(\v_\b) \geq \frac{N}{2\b}(1-\k \pi) + \frac{1-\sqrt{1-4\l^2}}{4\b}-\frac{\l}{2}\arcsin(2 \l).\]
We claim that we can construct a sequence $(\psi_\b)_\b$ such that $F_{\b,\k}(\psi_\b)=\
\frac{1}{2\b}(1-\k \pi) + \frac{1-\sqrt{1-4\l^2}}{4\b}-\frac{\l}{2}\arcsin(2 \l)+o_\b(1)$. This sequence is built combining the previous construction with the construction of the test function of the proof of Proposition \ref{DLenergie1}. More precisely we take:
\begin{equation}
\psi_\b(x):= \begin{cases}
0 & \text{ if }  0\leq x < \frac{1}{4}-\b T,  \\
\v_T(\frac{x-t_1}{\b}) & \text{ if }  t_1-\b T \leq x \leq t_1+\b T, \\
\pi & \text{ if }  t_1+\b T < x <\frac{1}{2}, \\
\pi+ \frac{(x-1/2)}{\b}\eta & \text{ if } \frac{1}{2}\leq x \leq \frac{1}{2}+\b, \\
\pi + \arctan\left(e^{\frac{x-1}{\b}}\tan[\arcsin(2\k \b)]\right) & \text{ if } \frac{1}{2}+\b\leq x \leq 1,
\end{cases}
\end{equation}
with $\eta:= 2\arctan\left( e^{\frac{1}{2\b}}\tan[\arcsin(2\k \b)]\right)$. Note that we have $\psi_\b'=\frac{\sin \psi_\b}{\b}$ in $( \frac{1}{2}+\b,1)$ and that $\psi_\b(1)=\pi+\arcsin(2\l)$. By combining the previous point with the ideas of the construction of the test function in the proof of Proposition \ref{DLenergie1} we can conclude the proof.
\end{itemize}
\end{proof}

The next proposition shows that a minimizer of $F_{\b,\k}$ in $\mathcal{J}$ enjoys some symmetry property with respect to the point $\frac{T}{2}$ such that $\v_\b(\frac{T}{2})=\frac{\pi}{2}$ and also some periodicity property of period $T$.

\begin{proposition}\label{Symmetry}
Let $\l > \frac{1}{\pi}$ and let $\v_\b$ be a minimizer of $F_{\b,\k}$ in $\mathcal{J}$. Let $\frac{T}{2}$ be the point in $(0,1)$ such that $\v_\b(\frac{T}{2})=\frac{\pi}{2}$. Then we have that
\begin{equation}\label{symmetry}
\v_\b(x)=\pi-\v_\b(T-x), \ \text{ for all } x \text{ in } [0,T],
\end{equation}
and in particular $\v_\b(T)=\pi$. Furthermore,  $\v_\b$ is periodic in the following sense:
\begin{equation}\label{periodicity}
\v_\b(x+T)=\pi+\v_\b(x), \text{ for all } x \text{ in } [0,1-T]
\end{equation}
\end{proposition}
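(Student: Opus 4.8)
The plan is to exploit two symmetries of the autonomous Euler--Lagrange equation \eqref{Equationsimplifie}, namely $\v''=\frac{1}{\b^2}\sin\v\cos\v$, and to conclude each time via the uniqueness part of the Cauchy--Lipschitz theorem. First I would record the geometric setup: by Proposition \ref{Propinitiale} the minimizer $\v_\b$ is smooth, strictly increasing and satisfies the first integral \eqref{integrale1eree}, while Lemma \ref{Superieurapi} gives $\v_\b(1)\geq\pi$. Hence there is a unique $s\in(0,1]$ with $\v_\b(s)=\pi$ and a unique point $\frac{T}{2}\in(0,s)$ with $\v_\b(\frac{T}{2})=\frac{\pi}{2}$.

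To prove \eqref{symmetry}, I would define $w(x):=\pi-\v_\b(s-x)$ on $[s-1,s]\supseteq[0,s]$ (the inclusion holds since $s\le 1$). A direct computation using $\sin(\pi-w)=\sin w$ and $\cos(\pi-w)=-\cos w$ shows that $w$ solves the same equation $w''=\frac{1}{\b^2}\sin w\cos w$. At $x=0$ we have $w(0)=\pi-\v_\b(s)=0=\v_\b(0)$ and $w'(0)=\v_\b'(s)$; since $\sin\v_\b(s)=\sin\pi=0$, the first integral \eqref{integrale1eree} gives $\v_\b'(s)^2=\v_\b'(0)^2$, and as $\v_\b'>0$ we get $w'(0)=\v_\b'(0)$. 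Thus $w$ and $\v_\b$ are two solutions of the same second order ODE with smooth right-hand side, agreeing together with their derivatives at $0$, so by uniqueness $w\equiv\v_\b$ on $[0,s]$, that is $\v_\b(x)=\pi-\v_\b(s-x)$. Evaluating at $x=s/2$ yields $\v_\b(s/2)=\pi/2$, so by uniqueness of the $\frac{\pi}{2}$-level point $s/2=T/2$, hence $s=T$. In particular $T\le 1$, $\v_\b(T)=\pi$, and the relation just obtained is exactly \eqref{symmetry}.

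For the periodicity \eqref{periodicity}, I would set $u(x):=\v_\b(x+T)-\pi$ on $[0,1-T]$, which is nonempty since $T\le 1$. Using $\sin(u+\pi)=-\sin u$ and $\cos(u+\pi)=-\cos u$ one checks that $u$ again solves $u''=\frac{1}{\b^2}\sin u\cos u$. At $x=0$ we have $u(0)=\v_\b(T)-\pi=0=\v_\b(0)$ and $u'(0)=\v_\b'(T)=\v_\b'(0)$ by the same first-integral argument as above (now $\sin\v_\b(T)=\sin\pi=0$). Uniqueness then forces $u\equiv\v_\b$ on $[0,1-T]$, which is precisely \eqref{periodicity}.

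I expect the only delicate points to be the verification that the reflection $x\mapsto s-x,\ \v\mapsto\pi-\v$ and the translation $x\mapsto x+T,\ \v\mapsto\v-\pi$ genuinely preserve \eqref{Equationsimplifie} (the sign changes in $\sin$ and $\cos$ must cancel), the matching of the derivatives at the endpoint through the first integral \eqref{integrale1eree}, and the bookkeeping of the domains of definition so that the uniqueness theorem applies on the full interval $[0,s]$ (resp. $[0,1-T]$) rather than only locally. None of these is a serious obstacle, the essential inputs being the monotonicity of $\v_\b$ and the bound $\v_\b(1)\ge\pi$ of Lemma \ref{Superieurapi}.
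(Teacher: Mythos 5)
Your proof is correct and follows essentially the same route as the paper: both exploit the reflection and translation symmetries of the autonomous equation \eqref{Equationsimplifie} together with Cauchy--Lipschitz uniqueness. The only difference is cosmetic --- the paper reflects about the half-level point $\frac{T}{2}$, where the matching of the value and the derivative of $\v_\b$ is automatic, while you anchor the reflection at the $\pi$-level point $s$ and match the Cauchy data at $0$ via the first integral \eqref{integrale1eree}; your variant has the small advantage of establishing $s=T\leq 1$ explicitly, a domain point the paper's proof leaves implicit.
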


\begin{proof}
First note that from Lemma \ref{Superieurapi} and from the fact that $\v_\b$ is increasing, we have the existence and uniqueness of $\frac{T}{2}$ such that $\v_\b(\frac{T}{2})=\frac{\pi}{2}$.
We set $\psi_\beta(x) :=\pi-\v_\b(T-x)$ defined for $0\leq x\leq T$.
Then the Cauchy-Lipschitz Theorem implies that $\psi_\b=\v_\b$. Taking $x=T$ in \eqref{symmetry} we find $\v_\b(T)=\pi$.
In the same way, we now set $\Phi_\b(x):=\v_\b(x+T)-\pi$ defined for $x$ in $[0,1-T]$.
We apply the Cauchy-Lipschitz Theorem again and find \eqref{periodicity}.
\end{proof}
We can now obtain an expansion of the ground state of the energy when $\l$ does not depend on $\b$.

\begin{proposition}\label{propcask>1surpi}
Let $\l=\k \b>\frac{1}{\pi}$ with $\l$ independent of $\b$. Let $\v_\b$ be a minimizer of $F_{\b,\k}$ in $\mathcal{J}$, let $0<{\tilde{\a}}_0<2\l$ be the unique number such that \eqref{defalpha0} holds, then we have the asymptotic expansion \eqref{DLenergie2}.
\end{proposition}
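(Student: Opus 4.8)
The plan is to exploit the special structure of the minimizer --- the first integral \eqref{integrale1eree} and the quasi-periodicity \eqref{periodicity} --- to evaluate its energy almost explicitly, thereby reducing the expansion to the minimization over a single parameter of an ``energy per unit length'' $h$. Set $\tilde{\a}_\b:=\b\,\v_\b'(0)$; evaluating \eqref{integrale1eree} at $x=1$ and using $\v_\b'(1)=2\k$ gives $4\k^2=\frac1{\b^2}\sin^2\v_\b(1)+\v_\b'(0)^2$, hence the only a priori bound I need, namely $0<\tilde{\a}_\b\le 2\l$. Writing \eqref{integrale1eree} as $\v_\b'=\frac1\b\sqrt{\sin^2\v_\b+\tilde{\a}_\b^2}$ and changing variables $y=\v_\b(x)$, one period of $\v_\b$ (from $k\pi$ to $(k+1)\pi$) has length $T=\b\,I(\tilde{\a}_\b)$ and bulk energy $\frac1{8\b}\bigl(2J(\tilde{\a}_\b)-\tilde{\a}_\b^2 I(\tilde{\a}_\b)\bigr)$, where
$$I(\tilde{\a})=\int_0^{\pi}\frac{dy}{\sqrt{\sin^2 y+\tilde{\a}^2}},\qquad J(\tilde{\a})=\int_0^{\pi}\sqrt{\sin^2 y+\tilde{\a}^2}\,dy,$$
while the linear term $-\frac\k2\int\v_\b'$ contributes $-\frac{\l\pi}{2\b}$ per period.

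Splitting $[0,1]$ into the $N=E(1/T)$ full periods and the remaining interval $[NT,1]$ of length $<T$, and using $NT=1+O(T)$, the per-period contributions sum to
$$F_{\b,\k}(\v_\b)=\frac{N\,I(\tilde{\a}_\b)}{\b}\,h(\tilde{\a}_\b)+(\text{boundary})=\frac{h(\tilde{\a}_\b)}{\b^2}+(\text{boundary}),$$
where I define
\begin{equation}\label{defh2}
h(\tilde{\a}):=\frac{1}{I(\tilde{\a})}\left(\frac{J(\tilde{\a})}{4}-\frac{\tilde{\a}^2 I(\tilde{\a})}{8}-\frac{\l\pi}{2}\right).
\end{equation}
The delicate point is that all errors are $O_\b(1/\b)$ uniformly in $\tilde{\a}_\b\in(0,2\l]$, even though no lower bound on $\tilde{\a}_\b$ (equivalently, on $N$) is available a priori. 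The linear part of the boundary term is bounded by $\frac{\k\pi}2=O_\b(1/\b)$ and its bulk part by $\frac1{8\b}(2J-\tilde{\a}_\b^2 I)$; both this quantity and $I(\tilde{\a})h(\tilde{\a})=\frac{J}4-\frac{\tilde{\a}^2 I}8-\frac{\l\pi}2$ remain bounded as $\tilde{\a}\to0$ because $J(\tilde{\a})\to 2$ and $\tilde{\a}^2 I(\tilde{\a})\to0$, which controls both the boundary layer and the replacement $N I(\tilde{\a}_\b)=\frac1\b+O(I(\tilde{\a}_\b))$. The bound $\tilde{\a}_\b\le 2\l$ together with the monotonicity of $I$ also yields $N\le C/\b$.

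Next I analyse $h$ on $(0,\infty)$. Using $J'=\tilde{\a}I$ and $I'=-\tilde{\a}K$ with $K(\tilde{\a})=\int_0^\pi(\sin^2 y+\tilde{\a}^2)^{-3/2}dy>0$, a short computation gives
$$h'(\tilde{\a})=\frac{\tilde{\a}\,K(\tilde{\a})}{4\,I(\tilde{\a})^2}\bigl(J(\tilde{\a})-2\l\pi\bigr).$$
Since $J$ is strictly increasing with $J(0)=2<2\l\pi$ (this is where $\l>\frac1\pi$ enters) and $J(2\l)>2\l\pi$, there is a unique $\tilde{\a}_0\in(0,2\l)$ with $J(\tilde{\a}_0)=2\l\pi$, which is precisely \eqref{defalpha0} after halving the integral over $[0,\pi]$; moreover $h'<0$ on $(0,\tilde{\a}_0)$ and $h'>0$ on $(\tilde{\a}_0,\infty)$, so $\tilde{\a}_0$ is the global minimum of $h$. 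Substituting $J(\tilde{\a}_0)=2\l\pi$ produces the key cancellation $h(\tilde{\a}_0)=\frac{\l\pi}{2I(\tilde{\a}_0)}-\frac{\tilde{\a}_0^2}{8}-\frac{\l\pi}{2I(\tilde{\a}_0)}=-\frac{\tilde{\a}_0^2}{8}$.

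Finally I conclude by sandwiching. The lower bound is immediate from the representation and $h(\tilde{\a}_\b)\ge\min_{(0,2\l]}h=h(\tilde{\a}_0)=-\tilde{\a}_0^2/8$, giving $F_{\b,\k}(\v_\b)\ge-\frac{\tilde{\a}_0^2}{8\b^2}+O_\b(1/\b)$. For the matching upper bound I test with the exact periodic solution $\v_*$ of $\v''=\frac1{\b^2}\sin\v\cos\v$ with $\v_*(0)=0$ and $\v_*'(0)=\tilde{\a}_0/\b$: it belongs to $\mathcal J$, and the same per-period computation with $\tilde{\a}=\tilde{\a}_0$ fixed gives $\min_{\mathcal J}F_{\b,\k}\le F_{\b,\k}(\v_*)=\frac{h(\tilde{\a}_0)}{\b^2}+O_\b(1/\b)=-\frac{\tilde{\a}_0^2}{8\b^2}+O_\b(1/\b)$. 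Combining the two bounds proves \eqref{DLenergie2}. I expect the main obstacle to be the uniform bookkeeping in the second step --- controlling the boundary layer and the floor $N=E(1/T)$ with a uniform $O_\b(1/\b)$ error despite the absence of an a priori lower bound on $\tilde{\a}_\b$; once the representation $F_{\b,\k}(\v_\b)=h(\tilde{\a}_\b)/\b^2+O_\b(1/\b)$ is secured, the analysis of $h$, although it is what pins down the constant $\tilde{\a}_0$ and the exact value $-\tilde{\a}_0^2/8$, is routine calculus.
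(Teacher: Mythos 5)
Your proof is correct, and while it shares the paper's overall skeleton --- reduction to a per-period energy, the function $h$ of \eqref{defh2} (your $h$ coincides with the paper's after halving the integrals over $[0,\pi]$ by symmetry of $\sin^2$), the identification of ${\tilde{\a}}_0$ as the unique minimizer of $h$ with $h({\tilde{\a}}_0)=-{\tilde{\a}}_0^2/8$, and the exact periodic solution with initial slope ${\tilde{\a}}_0/\b$ as the test function for the upper bound --- your lower bound uses a genuinely different and more direct mechanism. The paper never evaluates the minimizer's energy exactly: it bounds the energy on each half period below by the value $e$ of an auxiliary constrained minimization over $\mathcal{F}$ (functions bridging $0$ to $\pi/2$ on $[0,T/2]$), estimates $e$ from below via the Modica--Mortola inequality with the constant $\a^2$ inserted, and needs Lemma \ref{enegative} ($e<0$) in order to pass from $2Ne$ to $2e/T$ using only $N\le 1/T$; this produces the chain \eqref{bbb}--\eqref{defh}. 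You instead exploit the fact that the minimizer solves the ODE exactly, so the first integral \eqref{integrale1eree} turns the per-period energy into an identity, $F_{\b,\k}(\v_\b)=h(\tilde{\a}_\b)/\b^2+O_\b(1/\b)$, and the lower bound becomes simply $h(\tilde{\a}_\b)\ge h({\tilde{\a}}_0)$; this dispenses with the auxiliary problem, the Modica--Mortola step and Lemma \ref{enegative} in one stroke (for the exact solution the Modica--Mortola inequality is an equality anyway, which is what your computation makes explicit). The uniform bookkeeping you single out as the delicate point is indeed the step that must be checked, and you handle it correctly: since ${\tilde{\a}}_\b\le 2\l$ and $I(\tilde{\a})h(\tilde{\a})=\frac{J(\tilde{\a})}{4}-\frac{\tilde{\a}^2I(\tilde{\a})}{8}-\frac{\l\pi}{2}$, $J(\tilde{\a})$ and $\tilde{\a}^2I(\tilde{\a})$ all stay bounded as $\tilde{\a}\to0$, the floor $N=E(1/T)$, the boundary layer and the linear remainder each cost only $O_\b(1/\b)$, with no lower bound on $\tilde{\a}_\b$ required; note that it is Lemma \ref{Superieurapi} together with Proposition \ref{Symmetry} (which you invoke through \eqref{periodicity}) that guarantees $T\le 1$, hence $N\ge1$, so the decomposition is nonvacuous. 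What the paper's less direct route buys is an inequality valid for arbitrary competitors with prescribed endpoint values rather than only for exact ODE solutions, which is closer in spirit to how $h$ is reused later for almost minimizers of the full energy; for the present proposition, however, your argument is a clean simplification that yields \eqref{DLenergie2} with the same error $O_\b(1/\b)$.
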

\begin{proof}
To prove the expansion of the energy we first find a suitable lower bound for the energy.\\

\textit{Lower bound:} Note that we have $\v_\b(0)=0$, $\v_\b(\frac{T}{2})=\frac{\pi}{2}$. We let $\a:= \v_\b'(0)$. For simplicity we let $\v=\v_\b$, from \eqref{integrale1eree} integrated from $0$ to $T/2$, with the help of a change of variable we find
\begin{eqnarray}\label{defT}
\frac{T}{2} =\int_0^{\frac{\pi}{2}} \frac{\b\  dy}{\sqrt{\a^2 \b^2+\sin^2 y}}.
\end{eqnarray}
We define $N$ by the relation $N=E(\frac{1}{T})$. From the quasi-periodicity property \eqref{periodicity} and the fact that $\v$ is increasing we have that:
\begin{equation}\nonumber
F_{\b,\k}(\v) \geq 2N e- \frac{\l (\v(1)-N\pi)}{2\b}
\end{equation}
where $e$ is the minimum of the energy on half of a period, that is
\begin{equation}\label{defe}
e := \min_{ u \in \mathcal{F}} \int_0^{\frac{T}{2}} \left(\frac{u'(x)^2}{8}+\frac{1}{8\b^2}\sin^2u(x)\right)dx-\frac{\l \pi }{4\b}
\end{equation}
and
\begin{equation}\label{defF}
\mathcal{F}:= \{ u \in H^1((0,\frac{T}{2})); \ u(0)=0 \text{ and } u(\frac{T}{2})=\frac{\pi}{2} \}.
\end{equation}
We now write
\[ \int_0^{\frac{T}{2}} \left(\frac{u'(x)^2}{8}+\frac{\sin^2u(x)}{8\b^2}\right)dx-\frac{\l \pi }{4\b}= \int_0^{\frac{T}{2}} \left[\frac{u'(x)^2}{8}+\left(\frac{\sin^2u(x)}{8\b^2} +\frac{\a^2 }{8}\right)\right]dx-\frac{\l \pi }{4\b}-\frac{\a^2 T}{16} \]
and we use Modica-Mortola's trick to say that
\begin{equation} e \geq \frac{1}{\b} \left[ \int_0^{\frac{\pi}{2}} \frac{1}{4}\sqrt{\a^2 \b^2 +\sin^2y }\ dy -\frac{\l \pi}{4} \right]-\frac{\a^2  T}{16 }.
\end{equation}
We now set $\tilde{\a}= \a \b$. We claim that $e<0$ (we postpone the proof of this fact for clarity and refer to Lemma \ref{enegative}). Since $N=E(\frac{1}{T}) \leq \frac{1}{T}$ we thus have that
\begin{equation}\label{bbb}
F_{\b,\k}(\v) \geq \frac{2 e}{T}- \frac{\l (\v(1)-N\pi)}{2\b}.
\end{equation}
We use \eqref{defT} to estimate  $\frac{2e\b^2}{T}$ and find:
\begin{equation}\label{defh} \frac{2e \b^2}{T} \geq \frac{ \int_0^{\frac{\pi}{2}}\sqrt{\tilde{\a}^2  +\sin^2y }\ dy- \l\pi  }{4\int_0^{\frac{\pi}{2}} \frac{ dy}{\sqrt{\tilde{\a}^2 +\sin^2 y}} } -\frac{\tilde{\a}^2}{8}.
\end{equation}
Let us study the function
\begin{equation}\label{defh2}
h(x):= \frac{ \int_0^{\frac{\pi}{2}}\sqrt{x^2  +\sin^2y }\ dy- \l\pi  }{4\int_0^{\frac{\pi}{2}} \frac{ dy}{\sqrt{x^2 +\sin^2 y}} } -\frac{x^2}{8}.
\end{equation}
This function is smooth for $x>0$ and we have
\begin{eqnarray}\nonumber
h'(x)&=& \frac{ x\left(\int_0^{\frac{\pi}{2}} \frac{dy}{\sqrt{x^2+\sin^2y}}\right)^2+x \left( \int_0^{\frac{\pi}{2}} \sqrt{x^2+\sin^2y}\ dy -\l \pi  \right) \int_0^{\frac{\pi}{2}} \frac{dy}{(\sqrt{x^2+\sin^2y})^3}}{4\left(\int_0^{\frac{\pi}{2}} \frac{dy}{\sqrt{x^2+\sin^2y}}\right)^2}-\frac{x}{4} \\
&=& \frac{x \left( \int_0^{\frac{\pi}{2}} \sqrt{x^2+\sin^2y}\ dy -\l \pi  \right) \int_0^{\frac{\pi}{2}} \frac{dy}{(\sqrt{x^2+\sin^2y})^3}}{4\left(\int_0^{\frac{\pi}{2}} \frac{dy}{\sqrt{x^2+\sin^2y}}\right)^2}.
\end{eqnarray}
This expression shows that, when $\l \pi >1$ there exists a unique ${\tilde{\a}}_0={\tilde{\a}}_0(\l)$ such that $h$ has a global minimum at $x={\tilde{\a}}_0$, defined by \eqref{defalpha0}.
Moreover we have $h({\tilde{\a}}_0)=-\frac{{\tilde{\a}}_0^2}{8}$. Thus, by using \eqref{bbb} we obtain a lower-bound on the energy:
\begin{eqnarray}\label{firstlowerboundalpha0}
F_{\b,\k}(\v) \geq  -\frac{{\tilde{\a}}_0^2}{8\b^2}- \frac{\l (\v(1)-N\pi)}{2\b}.
\end{eqnarray}
When $\l$ does not depend on $\b$, then ${\tilde{\a}}_0$ does not depend on $\b$ either. Note that from the definition of the integer part, we have $ 1-T< NT \leq 1$. We now use \eqref{periodicity} to deduce that $0\leq \v(1)-\v(NT)=\v(1)-N\pi<\pi$. This implies that
\begin{equation}\label{secondlowerboundalpha0}
F_{\b,\k}(\v_\b) \geq -\frac{{\tilde{\a}}_0^2}{8\b^2}+O\left(\frac{1}{\b}\right).
\end{equation}

\textit{Upper bound:} To find a matching upper-bound  we take the solution of
\begin{equation}\label{defsurR}
\left\{
\begin{array}{lcll}
\v''&=&\frac{1}{\b^2}\cos \v \sin \v \ \text{ in } \R^+, \\
\v(0)&=&0, \\
\v'(0)&=&\frac{{\tilde{\a}}_0}{\b},
\end{array}
\right.
\end{equation}
with ${\tilde{\a}}_0$ defined by \eqref{defalpha0}. This solution satisfies
  \begin{equation}\label{integrale1er2}
  \v'(x)^2=\frac{\sin^2 \v(x)+{\tilde{\a}}_0^2}{\b^2} \text{ for all } x \text{ in } [0,1].
  \end{equation}
Let $T$ be defined by $$\frac{T}{2}=\int_0^\frac{\pi}{2} \frac{\b \ dy}{\sqrt{{\tilde{\a}}_0^2+\sin^2y}}.$$ From \eqref{integrale1er2} we have that $\v(\frac{T}{2})=\frac{\pi}{2}$. We need to show that $\frac{T}{2}<1$, that is
\begin{equation}
\int_0^{\frac{\pi}{2}} \frac{dy}{\sqrt{{\tilde{\a}}_0^2+\sin^2y}}< \frac{1}{\b}.
\end{equation}
This last inequality holds if ${\tilde{\a}}_0 > \frac{\pi \b}{2}$ since $\int_0^{\frac{\pi}{2}} \frac{dy}{\sqrt{{\tilde{\a}}_0^2+\sin^2y}}< \int_0^{\frac{\pi}{2}}\frac{1}{{\tilde{\a}}_0}$. But we observe that
\[ \frac{1}{2}\int_0^{\frac{\pi}{2}} \sqrt{ \pi^2 \b^2+4\sin^2 y} \rightarrow 1 \]
as $\b \rightarrow 0$. Thus for $\b$ small enough, $\frac{1}{2}\int_0^{\frac{\pi}{2}} \sqrt{ \pi^2 \b^2+4\sin^2 y}< \l \pi$. This means that ${\tilde{\a}}_0 > \frac{\pi \b}{2}$. From \eqref{integrale1er2}, we have
\begin{equation}\label{majorationT}
 \frac{\pi \b}{\sqrt{1+{\tilde{\a}}_0^2}}\leq T \leq \frac{\pi \b}{{\tilde{\a}}_0}.
\end{equation}
Thanks to \eqref{integrale1er2}, we can also show that $\v$ satisfies the symmetry and periodicity properties of \eqref{Symmetry}. We let $N:=E(\frac{1}{T})$ and we have
\begin{eqnarray}\label{11}
F_{\b,\k}(\v)&=&2N\left[ \int_0^\frac{T}{2}\left(\frac{\v'(x)^2}{8}+\frac{\sin^2\v(x)}{8\b^2}\right)dx-\frac{\l \pi}{4\b}\right] \nonumber \\
  & & \phantom{aaaaaaaaaaaa }+ \int_{NT}^1 \left(\frac{\v'(x)^2}{8}+\frac{\sin^2\v(x)}{8\b^2}\right)dx-\frac{\l}{2\b}\left( \v(1)-N\pi \right) \nonumber \\
&=& 2N \left[ \int_0^\frac{T}{2}\left( \frac{\v'(x)^2}{8}+\frac{\sin^2\v(x)+{\tilde{\a}}_0^2}{8\b^2}\right)-\frac{\l \pi}{4\b} -\frac{{\tilde{\a}}_0^2 T}{16\b^2}  \right] \nonumber \\
& & \phantom{aaaaaaaaaaaa }+\int_{NT}^1 \left(\frac{\v'(x)^2}{8}+\frac{\sin^2\v(x)}{8\b^2}\right)dx-\frac{\l}{2\b}\left( \v(1)-N\pi \right) \nonumber \\
&=& 2N\left[\int_0^\frac{\pi}{2}\frac{\sqrt{\sin^2y+{\tilde{\a}}_0^2}}{4\b}-\frac{\l \pi}{4\b}-\frac{{\tilde{\a}}_0^2T}{16} \right] \nonumber \\
& & \phantom{aaaaaaaaaaaa }+\int_{NT}^1 \left(\frac{\v'(x)^2}{8}+\frac{\sin^2\v(x)}{8\b^2}\right)dx-\frac{\l}{2\b}\left( \v(1)-N\pi \right) \nonumber \\
&=& \frac{-{\tilde{\a}}_0^2 NT  }{8\b^2} +\int_{NT}^1 \left(\frac{\v'(x)^2}{8}+\frac{\sin^2\v(x)}{8\b^2}\right)dx-\frac{\l}{2\b}\left( \v(1)-N\pi \right).
\end{eqnarray}
Now we note that \begin{equation}\label{DLdeN}
{N=E(\frac{1}{T})=\frac{1}{T}(1+O_\b(T))}
 \end{equation}because $T \rightarrow 0$ as $\b \rightarrow 0$ from \eqref{majorationT}. From the periodicity property of $\v$, we also have that
 \begin{eqnarray}\label{22}
 \int_{NT}^1 \left(\frac{\v'(x)^2}{8}+\frac{\sin^2\v(x)}{8\b^2}\right)dx &\leq& 2\int_0^\frac{T}{2} \left(\frac{\v'(x)^2}{8}+\frac{\sin^2\v(x)}{8\b^2}\right)dx \nonumber \\
 &\leq & 2\int_0^\frac{T}{2} \left(\frac{\v'(x)^2}{8}+\frac{\sin^2\v(x)+{\tilde{\a}}_0^2}{8\b^2}\right)dx-\frac{{\tilde{\a}}_0^2 T}{8\b^2} \nonumber \\
 & \leq & 2\int_0^\frac{\pi}{2} \frac{\sqrt{\sin^2 y+{\tilde{\a}}_0^2}\ dy}{4\b}-\frac{{\tilde{\a}}_0^2 T}{8\b^2} \nonumber \\
 & \leq & \frac{\l \pi}{2\b}-\frac{{\tilde{\a}}_0^2 T}{8\b^2}.
 \end{eqnarray}
 In the last inequality we have used the definition of ${\tilde{\a}}_0$ \eqref{defalpha0}. By using \eqref{majorationT} and \eqref{22} we find that \begin{equation}\label{33}
  \int_{NT}^1 (\frac{\v'(x)^2}{8}+\frac{\sin^2\v(x)}{8\b^2})dx=O(\frac{\l}{\b})=O(\frac{1}{\b}).
 \end{equation}
 We also recall from the periodicity property  $\v(x+T)=\pi+\v(x)$ for $x$ in $[0,1-T]$, that $0\leq \v(1)-N\pi<\pi$. We then conclude from \eqref{22} and \eqref{33} that
 \begin{equation}\label{upperboundalpha0}
 F_{\b,\k}(\v) \leq \frac{-{\tilde{\a}}_0^2   }{8\b^2}+O\left(\frac{1}{\b}\right).
 \end{equation}

\end{proof}

\begin{lemma}\label{enegative}
Let $\l>\frac{1}{\pi}$, let $e$ be defined by \eqref{defe} and $\mathcal{F}$ be defined by \eqref{defF}, then we have
\begin{equation}\label{eenegative}
e \leq \frac{1-\l \pi}{4\b}(1+o_\b(1))<0,
\end{equation}
for $\b$ small enough.
\end{lemma}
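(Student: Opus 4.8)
The plan is to read off the upper bound \eqref{eenegative} by testing the functional defining $e$ against a single transition profile, using that $e$ is an infimum. The point is essentially arithmetic: bridging $0$ to $\frac{\pi}{2}$ by one Modica--Mortola interface costs, to leading order, $\frac{1}{4\b}\int_0^{\pi/2}\sin y\,dy=\frac{1}{4\b}$, whereas the linear term $-\frac{\l\pi}{4\b}$ rewards such a transition by $\frac{\l\pi}{4\b}$; since we are in the regime $\l>\frac1\pi$, i.e. $\l\pi>1$, the reward beats the cost and the balance is negative.

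First I would record the Modica--Mortola inequality for the elastic/potential part: for every $u\in\mathcal F$ one has pointwise $\frac{u'(x)^2}{8}+\frac{\sin^2 u(x)}{8\b^2}\ge \frac{|u'(x)|\,|\sin u(x)|}{4\b}$, and for a monotone profile the change of variables $y=u(x)$ turns the integrated right-hand side into $\frac{1}{4\b}\int_0^{\pi/2}\sin y\,dy=\frac1{4\b}$, with equality exactly along $u'=\frac{\sin u}{\b}$. Then I would build an explicit competitor $u_\b\in\mathcal F$ that follows this optimal profile on the bulk of $(0,\frac T2)$ and is glued to the Dirichlet data $u_\b(0)=0$, $u_\b(\frac T2)=\frac\pi2$ by a short affine layer near the left endpoint, exactly in the spirit of the test functions of Proposition \ref{DLenergie1} and Lemma \ref{Superieurapi} (cf. \eqref{equationbeta}--\eqref{upperbound}). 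Choosing the width of the affine layer on a scale $\ll\b$ so that its elastic and potential contributions are negligible, I expect $\int_0^{T/2}\big(\frac{u_\b'(x)^2}{8}+\frac{\sin^2 u_\b(x)}{8\b^2}\big)\,dx=\frac{1}{4\b}(1+o_\b(1))$, whence
\[
e\le \frac{1}{4\b}(1+o_\b(1))-\frac{\l\pi}{4\b}=\frac{1-\l\pi}{4\b}(1+o_\b(1)),
\]
which is $<0$ for $\b$ small because $\l\pi>1$; this is precisely \eqref{eenegative}.

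The delicate point is that the half-period length $\frac T2=\b\int_0^{\pi/2}\frac{dy}{\sqrt{\a^2\b^2+\sin^2 y}}$ (see \eqref{defT}, with $\a=\v_\b'(0)$) is only of order $\b$ as soon as the interior slope $\a$ stays bounded away from $0$, so the interface must be accommodated inside an interval whose rescaled length $\frac{T}{2\b}$ need not blow up; the real work is to check that the transition can still be realized with cost $\frac{1}{4\b}(1+o_\b(1))$ rather than with an uncontrolled $O(\frac1\b)$ excess that would corrupt the constant. I would carry this out in the blow-up variable $x=\b s$, in which the competitor lives on $(0,\frac{T}{2\b})$ and the energy becomes the $\b$-independent integral $\frac18\int(\psi'^2+\sin^2\psi)$, so that the Modica--Mortola cost and the affine-patch error are estimated by $\b$-uniform one-dimensional quantities. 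Finally I would stress that only the strict negativity $e<0$ is actually invoked downstream --- in the passage $2Ne\ge \frac{2e}{T}$ of the proof of Proposition \ref{propcask>1surpi} --- so that, regardless of how tight one makes the constant, it is in any case enough that the same construction produces one admissible profile of negative energy.
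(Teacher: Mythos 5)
Your construction is the same one the paper itself uses for this lemma (a Modica--Mortola profile glued to the datum $u(0)=0$ by a thin affine layer, as in Proposition \ref{DLenergie1}), and the step you flag as ``the real work'' is a genuine gap --- moreover one that cannot be closed, because the estimate it is supposed to produce is false in the regime that actually occurs. By \eqref{defT} the interval $(0,\frac{T}{2})$ has rescaled length $L:=\frac{T}{2\b}=\int_0^{\pi/2}\frac{dy}{\sqrt{\tilde{\a}^2+\sin^2y}}$, where $\tilde{\a}:=\b\v_\b'(0)\in(0,2\l]$, and nothing forces $L\to\infty$; on the contrary, the paper proves afterwards that $\tilde{\a}\to{\tilde{\a}}_0>0$, so $L$ stays bounded. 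On an interval of bounded rescaled length the transition from $0$ to $\frac{\pi}{2}$ cannot be realized at cost $\frac{1}{4\b}(1+o_\b(1))$: the paper's inequality \eqref{defh} is in fact an equality (its right-hand side is attained by the monotone solution of the Euler--Lagrange equation with first integral $\tilde{\a}^2$, which by \eqref{defT} reaches $\frac{\pi}{2}$ exactly at $\frac{T}{2}$), so that
\begin{equation*}
e=\frac{T}{2\b^2}\,h(\tilde{\a})=\frac{1}{\b}\left[\frac14\int_0^{\pi/2}\frac{\tilde{\a}^2/2+\sin^2y}{\sqrt{\tilde{\a}^2+\sin^2y}}\,dy-\frac{\l\pi}{4}\right],
\end{equation*}
and the bracketed integral is a strictly increasing function of $\tilde{\a}$ whose value at $\tilde{\a}=0$ is $\frac14$. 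Hence $e\geq\frac{1-\l\pi}{4\b}$ always, with an excess of order $\frac{1}{\b}$ whenever $\tilde{\a}$ is bounded away from $0$; equivalently, your glued competitor must bridge $0$ to $\eta=2\arctan(e^{-L+o_\b(1)})$, a height bounded away from $0$, and no choice of the gluing width makes that bridge cost $o_\b(\frac{1}{\b})$. So the expansion you ``expect'', and with it \eqref{eenegative} itself, can hold only if $\tilde{\a}\to0$, which contradicts the rest of the paper. (The paper's own proof has exactly the same defect: since $\frac{T}{2}-\gamma=O_\b(\b)$, its $\eta$ is of order one, so the affine-patch term $\frac{\eta^2}{8\gamma}$ with $\gamma=\b^3$ blows up like $\b^{-3}$.)

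Your closing remark --- that only $e<0$ is used downstream, in the step $2Ne\geq\frac{2e}{T}$ --- identifies the right target, but the construction does not deliver it either: since $e=\frac{T}{2\b^2}h(\tilde{\a})$, negativity of $e$ is equivalent to $h(\tilde{\a})<0$, and this does not follow from the sole a priori bound $\tilde{\a}\leq2\l$, because $h$ becomes positive again beyond its minimizer ${\tilde{\a}}_0$ (numerically $h(2\l)>0$ for $\l$ close to $\frac{1}{\pi}$). A correct proof of $e<0$ must therefore use global information about the minimizer rather than a competitor on $(0,\frac{T}{2})$: for instance, combine the paper's inequality $F_{\b,\k}(\v_\b)\geq 2Ne-\frac{\l\pi}{2\b}$ with $N\geq1$ (Lemma \ref{Superieurapi}) and with the upper bound \eqref{upperboundalpha0}, which is established by a test function independently of the present lemma, to obtain $2Ne\leq-\frac{{\tilde{\a}}_0^2}{8\b^2}+O_\b(\frac{1}{\b})<0$; or observe that the lemma can be bypassed altogether, since in the case $e\geq0$ the desired lower bound \eqref{secondlowerboundalpha0} is immediate from $2Ne\geq0\geq-\frac{{\tilde{\a}}_0^2}{8\b^2}$.
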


\begin{proof}
We construct a test function which proves \eqref{eenegative}. Let $0<\gamma<\frac{T}{2}$ to be fixed later. We take $u$ the solution of $u'=\sin u$ on $(\gamma,\frac{T}{2})$ such that $u(\frac{T}{2})=\frac{\pi}{2}$. That is $u(x)=2\arctan (e^{x-\frac{T}{2}})$. We set $\eta =2\arctan(e^{\gamma-\frac{T}{2}})$ and in $[0,\gamma]$ we take $u(x)=\frac{\eta x}{\gamma}$. We thus have
\begin{eqnarray}
\int_0^{\frac{T}{2}}\left(\frac{u'(x)^2}{8}+\frac{\sin^2 u(x)}{8\b^2}\right)dx&=& \int_0^\gamma \frac{\eta^2}{8\gamma^2}dx+\frac{1}{8\beta^2}\int_0^\gamma \sin^2(\frac{\eta x}{\gamma})dx  \nonumber \\
& &\phantom{aaaaaaaa}+\int_\gamma^\frac{T}{2}\frac{|u'(x)||\sin u(x)|}{4\b}dx \nonumber \\
&\leq & \frac{\eta^2}{8\gamma}+\frac{\gamma}{\beta^2}+\int_\eta^\frac{\pi}{2}\frac{|\sin y|}{4\b}\ dy \nonumber \\
&\leq & \frac{\eta^2}{8\gamma}+\frac{\gamma}{\beta^2}+\frac{\cos \gamma}{4\b}, \nonumber
\end{eqnarray}
We then choose $\gamma =\beta^3$ (note that $\beta^3<T$ for $\beta$ small since $T=2\beta \int_0^\frac{\pi}{2}\frac{1}{\sqrt{\sin^2y+{\tilde{\a}}_0^2}}\geq \frac{\pi \beta}{2\sqrt{1+4\l^2}}$ because ${\tilde{\a}}_0\leq 2\l$) and this yields the result.
\end{proof}

\begin{proposition}\label{evaluationdephi1}
Let $\k=\frac{\l}{\b}$, with $\l>\frac{1}{\pi}$ independent of $\b$. Let $\v_\b$ be a minimizer of $F_{\b,\k}$ in $\J$. We set $\psi_\b(x):=\b \v_\b(x)$, then up to a subsequence, there exists $\psi_0$ in $H^1((0,1))$ such that $\psi_\b \rightharpoonup \psi_0$ in $H^1((0,1))$ and $\psi_\b \rightarrow \psi_0$ in $\mathcal{C}^0([0,1])$. Furthermore, there exists $l>0$ such that $\lim_{\b\rightarrow 0} \b \v_\b(1)=l$. In particular, if $N:=E(\frac{\v_\b(1)}{\pi})$, then there exist $0<c<C$ such that
\begin{equation}
\frac{c}{\b}\leq N \leq \frac{C}{\b}.
\end{equation}
\end{proposition}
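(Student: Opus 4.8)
My plan is to reduce everything to the control of the rescaled initial slope $\tilde{\a}:=\b\v_\b'(0)$, which I will confine to a compact subinterval of $(0,2\l]$ and eventually show converges to $\tilde{\a}_0$. First I would establish compactness. Evaluating the first integral \eqref{integrale1eree} at $x=1$ and using the Neumann condition $\v_\b'(1)=2\k=2\l/\b$ from Proposition \ref{Propinitiale} gives $\tilde{\a}^2=4\l^2-\sin^2\v_\b(1)\le 4\l^2$, so $\tilde{\a}\le 2\l$. Multiplying \eqref{integrale1eree} by $\b^2$ yields $\psi_\b'(x)^2=\sin^2\v_\b(x)+\tilde{\a}^2\le 1+4\l^2$, hence $\psi_\b=\b\v_\b$ is uniformly Lipschitz with $\psi_\b(0)=0$. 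Arzel\`a--Ascoli then gives, up to a subsequence, $\psi_\b\to\psi_0$ in $\mathcal{C}^0([0,1])$, and the uniform $H^1$ bound gives $\psi_\b\rightharpoonup\psi_0$ in $H^1((0,1))$ along a further subsequence.

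The heart of the matter is to bound $\tilde{\a}$ away from $0$ (equivalently to prevent $T\gg\b$, i.e.\ $N\ll 1/\b$). I would do this using only the coarse Modica--Mortola lower bound \eqref{firstlowerbound}, to avoid the circularity explained below. Since $\l\pi>1$ and $N=E(1/T)\le 1/T$, the leading term satisfies $\frac{N}{2\b}(1-\l\pi)\ge\frac{1-\l\pi}{2\b T}$, while the remaining term in \eqref{firstlowerbound} is $O(1/\b)$; writing $T=2\b\,I(\tilde{\a})$ with $I(\tilde{\a}):=\int_0^{\pi/2}(\tilde{\a}^2+\sin^2y)^{-1/2}\,dy$ from \eqref{defT}, this gives
\[
F_{\b,\k}(\v_\b)\ \ge\ \frac{1-\l\pi}{4\b^2\,I(\tilde{\a})}+O(1/\b).
\]
Comparing with $F_{\b,\k}(\v_\b)\le -\tilde{\a}_0^2/(16\b^2)$, valid for $\b$ small by \eqref{DLenergie2}, and multiplying by $\b^2$ forces $I(\tilde{\a})\le C$ with $C$ independent of $\b$. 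As $x\mapsto I(x)$ is strictly decreasing with $I(0^+)=+\infty$, this yields $\tilde{\a}\ge\tilde{\a}_*>0$. With $\tilde{\a}\in[\tilde{\a}_*,2\l]$ we obtain $T=2\b\,I(\tilde{\a})\asymp\b$, hence $c/\b\le N\le C/\b$.

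Finally I would identify the limit. Now that $\tilde{\a}$ is bounded below we have $T=O(\b)\to 0$, so $NT=1+O(\b)$ and the circularity disappears: running the exact computation of \eqref{11} for the minimizer itself---legitimate since it obeys the same first integral and the periodicity of Proposition \ref{Symmetry}---and bounding the contribution on $[NT,1]$ by $O(1/\b)$, I obtain $F_{\b,\k}(\v_\b)=h(\tilde{\a})/\b^2+O(1/\b)$ with $h$ the function \eqref{defh2}. Since $h(\tilde{\a}_0)=-\tilde{\a}_0^2/8=\min h$, comparison with \eqref{DLenergie2} gives $h(\tilde{\a})\to\min h$; because $h$ has a unique strict global minimum at $\tilde{\a}_0$ on the compact range $[\tilde{\a}_*,2\l]$, this forces $\tilde{\a}\to\tilde{\a}_0$ as a full limit (every subsequence has a sub-subsequence converging to $\tilde{\a}_0$). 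Then $\b N=\b E(1/T)=\b/T+O(\b)=1/(2I(\tilde{\a}))+O(\b)\to 1/(2I(\tilde{\a}_0))$, so $\b\v_\b(1)=\b N\pi+O(\b)\to l:=\pi/(2I(\tilde{\a}_0))>0$, which is the asserted limit (and identifies $\psi_0(x)=lx$).

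The main obstacle is precisely this circularity: the clean energy identity pinning $\tilde{\a}$ to $\tilde{\a}_0$ presupposes $T\to 0$, which is what must be proved, while the crude Modica--Mortola bound, valid for every $N$, is what breaks the loop and furnishes the lower bound on $\tilde{\a}$.
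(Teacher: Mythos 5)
Your proof is correct, and it follows a route that overlaps with the paper's in its two key comparisons but differs in implementation and ultimately proves more. For compactness, the paper bounds $\int_0^1(\psi_\b'-2\l)^2$ using the energy expansion \eqref{DLenergie2} together with the identity $F_{\b,\k}(\v_\b)=\frac18\int_0^1\bigl((\v_\b'-2\k)^2+\frac{1}{\b^2}\sin^2\v_\b\bigr)-\frac{\k^2}{2}$, whereas you use the first integral \eqref{integrale1eree} at $x=1$ with the Neumann condition to get $\tilde\a\le 2\l$ and hence a uniform \emph{Lipschitz} bound $|\psi_\b'|\le\sqrt{1+4\l^2}$; your argument is more elementary (no energy expansion needed at this stage) and gives a stronger bound. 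For the lower bound on $N$, both you and the paper play the crude Modica--Mortola bound \eqref{firstlowerbound} against the sharp expansion \eqref{DLenergie2}; the paper reads $N\ge c/\b$ off directly from the coefficient of $\frac{N}{2\b}(1-\l\pi)$, while you detour through $T=2\b I(\tilde\a)$ from \eqref{defT} and the monotonicity of $I$ to first pin $\tilde\a\ge\tilde\a_*>0$ --- equivalent in substance, and your version of the quantity $N$ ($E(1/T)$ versus the paper's $E(\v_\b(1)/\pi)$ in \eqref{firstlowerbound}) is legitimate since the two coincide by the quasi-periodicity of Proposition \ref{Symmetry}. Where you genuinely go beyond the paper is the last step: the paper simply sets $l:=\psi_0(1)$ (positive because $N\ge c/\b$), which strictly speaking only yields the limit along the extracted subsequence, and it defers the identification $\tilde\a\to\tilde\a_0$ to the next proposition; you instead run the per-period computation of \eqref{11} on the minimizer itself to get $F_{\b,\k}(\v_\b)=h(\tilde\a)/\b^2+O(1/\b)$ with $h$ from \eqref{defh2}, deduce $\tilde\a\to\tilde\a_0$ from the uniqueness of the minimizer of $h$ (the argument the paper uses later for the blow-up limit), and thereby obtain the \emph{full} limit with the explicit value $l=\pi/(2I(\tilde\a_0))$ and the identification $\psi_0(x)=lx$. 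This is a stronger conclusion than what the paper's own proof establishes, at the cost of importing the $h$-minimization argument into this proposition.
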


\begin{proof}
By using \eqref{DLenergie2} we find that
\begin{eqnarray}
F_{\b,\k}(\v_\b)= \frac{1}{8}\int_0^1 \left((\v_\b'(x)-2\k)^2+\frac{1}{\b^2}\sin^2\v_\b(x)\right)dx -\frac{\k^2}{2}= \frac{-{\tilde{\a}}_0^2}{8\b^2}(1+o_\b(1)). \nonumber
\end{eqnarray}
We now use that $\psi_\b'(x)^2=\b^2\v_\b'(x)^2$ and $\k=\b \l$ to obtain that
\begin{equation}
\int_0^1 \left(\psi_\b'-2\l \right)^2dx \leq C,
\end{equation}
for some constant $C$ which does not depend on $\b$. Since $\psi_\b(0)=0$, this implies that $\psi_\b$ is bounded in $H^1((0,1))$. In particular, up to a subsequence, there exists $\psi_0$ in $H^1((0,1))$ such that $\psi_\b \rightharpoonup \psi_0$ in $H^1((0,1))$ and $\psi_\b \rightarrow \psi_0$ in $\mathcal{C}^0([0,1])$.
We call $N:=E\left(\frac{\v_\b(1)}{\pi} \right)$ the number of periods. It follows from Proposition \ref{propcask>1surpi} that we have $F_{\b,\k}(\v_\b)=\frac{ -{\tilde{\a}}_0^2}{8\b}(1+o_\b(1))$. By using \eqref{firstlowerbound} we conclude that $N\geq \frac{c}{\b}$ for some $c>0$.
  And this, along with the uniform convergence of $\psi_\b$ implies that $\lim_{\b\rightarrow 0} \b \v_\b(1)=l$, for some $l>0$. In particular this implies that $N$ satisfies $\frac{c}{\b}\leq N \leq \frac{C}{\b}$ for some constants $0<c<C$.
\end{proof}

\begin{proposition}
Let $\l=\k \b >\frac{1}{\pi}$ with $\l$ independent of $\b$. Let $\v_\b$ be a minimizer of $F_{\b,\k}$ in $\mathcal{J}$. Let $\tilde{\v}_\b(x):= \v_\b(\b x)$ defined in $[0,\frac{1}{\b}]$. We have that $\tilde{\v}_\b$ converges in $C^\infty(K)$ for every $K \subset [0,+\infty)$ towards $\v_0$ a solution of \eqref{eqlimitk>1/pi}.
\end{proposition}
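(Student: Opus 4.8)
The plan is to show that the rescaled minimizer $\tilde{\v}_\b(x):=\v_\b(\b x)$, defined on the stretched interval $[0,\frac{1}{\b}]$, converges to the solution $\v_0$ of \eqref{eqlimitk>1/pi}. The strategy mirrors the blow-up arguments already carried out in Proposition \ref{blowblowup} and in the case $\k\b=o_\b(1)$: first obtain uniform local bounds on $\tilde{\v}_\b$, then pass to the limit in the rescaled Euler--Lagrange equation, and finally identify the limiting initial data.

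First I would record the equation satisfied by $\tilde{\v}_\b$. Applying the change of variable $x\mapsto \b x$ to the first equation of \eqref{Equationsimplifie} gives
\begin{equation}
\tilde{\v}_\b''=\sin\tilde{\v}_\b\cos\tilde{\v}_\b\ \text{ in }\ (0,\tfrac{1}{\b}),\qquad \tilde{\v}_\b(0)=0,\qquad \tilde{\v}_\b'(0)=\b\,\v_\b'(0)=:\tilde{\a}_\b,
\end{equation}
where I have used that $\v_\b(0)=0$ and set $\tilde{\a}_\b:=\b\v_\b'(0)$. Thus on every fixed compact set $K\subset[0,+\infty)$ the rescaled functions solve an autonomous, $\b$-independent ODE, and the only remaining $\b$-dependence is through the slope $\tilde{\a}_\b$ at the origin. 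The first integral \eqref{integrale1eree}, rescaled, reads $\tilde{\v}_\b'(x)^2=\sin^2\tilde{\v}_\b(x)+\tilde{\a}_\b^2$.

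Next I would obtain a uniform bound on $\tilde{\a}_\b$ and extract a convergent subsequence. Since $\tilde{\a}_\b=\b\v_\b'(0)=\tilde{\a}$ is precisely the quantity $\tilde{\a}=\a\b$ appearing in the lower-bound analysis of Proposition \ref{propcask>1surpi}, one expects $\tilde{\a}_\b$ to be bounded (indeed, by \eqref{integrale1eree} and $\v_\b(\tfrac{T}{2})=\tfrac{\pi}{2}$ one gets $\tilde{\a}_\b<2\l$, so $0<\tilde{\a}_\b<2\l$). Hence, up to a subsequence, $\tilde{\a}_\b\to\a_*$ for some $\a_*\in[0,2\l]$. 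The first integral then gives $\tilde{\v}_\b'(x)^2\leq 1+4\l^2$, so on any compact $K$ the family $\tilde{\v}_\b$ is bounded in $\mathcal{C}^1(K)$; combined with the ODE, $\tilde{\v}_\b''$ is bounded in $\mathcal{C}^0(K)$, and a bootstrap yields uniform $\mathcal{C}^\infty(K)$ bounds. By Arzel\`a--Ascoli, along a further subsequence $\tilde{\v}_\b\to\v_0$ in $\mathcal{C}^\infty_{\text{loc}}([0,+\infty))$, where $\v_0$ solves $\v_0''=\sin\v_0\cos\v_0$ with $\v_0(0)=0$ and $\v_0'(0)=\a_*$.

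The main obstacle, and the crux of the argument, is to identify $\a_*={\tilde{\a}}_0$, i.e.\ to show that the limiting slope is exactly the minimizer of the energy-per-period function $h$ defined in \eqref{defh2}. This is where one must invoke the sharpness of the energy expansion \eqref{DLenergie2}. The lower bound \eqref{firstlowerboundalpha0} in the proof of Proposition \ref{propcask>1surpi} shows that any slope other than ${\tilde{\a}}_0$ strictly increases the leading $-\tfrac{{\tilde{\a}}^2}{8\b^2}$ term, so the energy matching $F_{\b,\k}(\v_\b)=-\tfrac{{\tilde{\a}}_0^2}{8\b^2}(1+o_\b(1))$ forces $\tilde{\a}_\b\to{\tilde{\a}}_0$. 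Concretely, I would write the energy as a sum of $N\sim 1/T$ half-period contributions, each controlled from below by the function $h(\tilde{\a}_\b)$, and use $h(x)\geq h({\tilde{\a}}_0)=-\tfrac{{\tilde{\a}}_0^2}{8}$ with equality only at $x={\tilde{\a}}_0$; since the total energy saturates this bound to leading order, the slopes must converge to ${\tilde{\a}}_0$. Once $\a_*={\tilde{\a}}_0$ is established, the limit $\v_0$ solves \eqref{eqlimitk>1/pi} exactly, the limit is independent of the subsequence, and by uniqueness the whole family converges.
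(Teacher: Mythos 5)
Your proposal is correct and follows essentially the same route as the paper: rescale, obtain local compactness, pass to the limit in the ODE, and identify the limiting slope by combining the per-half-period lower bound \eqref{defh} involving the function $h$ of \eqref{defh2} with the sharp expansion \eqref{DLenergie2} and the uniqueness of the minimizer ${\tilde{\a}}_0$ of $h$, which is exactly the paper's argument. Two minor remarks: your compactness step (uniform $\mathcal{C}^1$ bounds straight from the rescaled first integral, then bootstrap and Arzel\`a--Ascoli) is slightly more direct than the paper's $H^1_{\text{loc}}$ bound obtained by counting periods in the energy; and the bound $\tilde{\a}_\b\leq 2\l$ should be justified by evaluating \eqref{integrale1eree} at $x=1$ together with the Neumann condition $\v_\b'(1)=2\k$, which gives $\b^2\v_\b'(0)^2\leq 4\l^2$, rather than by invoking $\v_\b(\frac{T}{2})=\frac{\pi}{2}$ as you wrote.
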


\begin{proof}
We let $\a_\b:=\v'_\b(0)$. The function $\tilde{\v}_\b$ satisfies
\begin{equation}
    \left\{
    \begin{array}{lcll}
    \tilde{\v_\b}''&=&\sin \tilde{\v}_\b \cos \tilde{\v}_\b \text{ in }
    (0,\frac{1}{\b}), \\
    \tilde{\v}_\b(0)&=&0, \\
    \tilde{\v}_\b'(0)&=& \b \a_\b.
    \end{array}
    \right.
    \end{equation}
Let us recall from Proposition \ref{Symmetry} that $\v_\b$ is quasi-periodic. We call $N=E\left(\frac{\v_\b(1)}{\pi} \right)$, and we have from the previous proposition that $\frac{c}{\b}\leq N \leq \frac{C}{\b}$. Let $K=[m;M]$ be a compact subset of $[0,\frac{1}{\b})$. On the interval $\b K\subset [0,1)$ there is a finite number of periods of $\v_\b$, let us call $L$ this number. Since the energy per period of $\v_\b$ is of order $\frac{1}{\b}$ (because the total energy is of order $\frac{1}{\b^2}$) we have that
\[ \int_{\b K} \left( \v_\b'(x)^2+\frac{1}{\b^2}\sin^2\v_\b(x)\right)dx-\frac{\l}{2\b}\v_\b(M\b)=\frac{-A}{\b}(1+o_\b(1)) \]
for some $A>0$. Since there {are exactly L} periods on $\b K$, we have $L\pi\leq \v_\b(M\b)<(L+1)\pi$. This yields that
\[ \int_{\b K} \left( \v_\b'(x)^2+\frac{1}{\b^2}\sin^2\v_\b(x)\right)dx \leq \frac{B}{\b}\]
for some $B>0$. But
\[\int_{\b K} \left( \v_\b'(x)^2+\frac{\sin^2\v_\b(x)}{\b^2}\right)dx= \frac{1}{\b}\int_K \left(\tilde{\v}_\b'(x)^2+ \sin^2\tilde{\v}_\b(x)\right)dx.\]
Hence we obtain that $\tilde{\v}_\b$ is bounded in $H^1_{\text{loc}}(\R^+)$ and converges weakly in that space to some $\v_0$. From the weak convergence in $H^1_{\text{loc}}(\R^+)$ and the strong convergence in $\mathcal{C}^0_{\text{loc}}([0,+\infty))$ we obtain that $\v_0$ satisfies $\v_0''=\sin \v_0 \cos \v_0$ in $\R^+$. From the regularity theory, $\v_0$ is in $\mathcal{C}^\infty(\R^+)$. We also have $\v_0(0)=0$ from the uniform convergence in compact sets of $[0,+\infty)$.
We also set $\tilde{\a}_\b = \frac{\a_\b}{\b}$. Because of the minimizing property of $\v_\b$, and from \eqref{DLenergie2} and \eqref{defh} we have that
\begin{equation}
h({\tilde{\a}}_0) \leq h(\tilde{\a}_\b) \leq h({\tilde{\a}}_0)(1+o_\b(1)),
\end{equation}
where $h$ is defined by \eqref{defh2}, ${\tilde{\a}}_0$ is the minimizer of $h$ and satisfies $h({\tilde{\a}}_0)= \frac{-{\tilde{\a}}_0^2}{8}$. Since ${\tilde{\a}}_0$ is the unique minimizer of $h$ this implies that
\begin{equation}
\tilde{\a}_\b \rightarrow {\tilde{\a}}_0, \ \ \text{ as } \b \rightarrow 0.
\end{equation}
By a bootstrap argument, we can show that $\tilde{\v}_\b$ converges in $\mathcal{C}^\infty_{\text{loc}}([0,+\infty))$ and hence satisfies \eqref{eqlimitk>1/pi}.
\end{proof}

When $\l$ tends to $+\infty$ as $\b$ goes to $0$, we can get the expansion of the minimum of the energy in a simpler way since $\v$ is almost linear.
\begin{proposition}\label{caskgrand}
Let $\l=\k \b > \frac{1}{\pi}$, let $\v_\b$ be a minimizer of $F_{\b,\k}$ in $\mathcal{J}$. Then if $\l \rightarrow +\infty$ as $\b \rightarrow 0$ we have
\begin{equation}\label{DL3}
F_{\b,\k}( \v_\b)= -\frac{\l^2}{2\b^2}(1+o_\b(1))
\end{equation}
and
\begin{equation}
\|\frac{\v_\b(x)}{2\k x}-1\|_{L^\infty((0,1))}\rightarrow 0 \text{ and } \|\v_\b'(x)-2\k\|_{L^\infty((0,1))} \rightarrow 0,
\end{equation}
as $\b$ tends to zero.
\end{proposition}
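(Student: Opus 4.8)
The plan is to exploit that, for large $\l$, the potential term $\frac{1}{8\b^2}\int_0^1\sin^2\v$ is negligible against the gain furnished by the linear term, so that the minimiser is essentially affine with slope $2\k$. First I would rewrite the energy by completing the square in the derivative terms: since $(0,1)$ has length one,
\begin{equation}
F_{\b,\k}(\v)=\frac18\int_0^1\bigl(\v'(x)-2\k\bigr)^2\,dx+\frac1{8\b^2}\int_0^1\sin^2\v(x)\,dx-\frac{\k^2}{2}.
\end{equation}
As $\k=\l/\b$, and both integrals are nonnegative, this already gives the lower bound $F_{\b,\k}(\v)\geq-\frac{\l^2}{2\b^2}$ for every $\v$ in $\mathcal J$.

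For the matching upper bound I would test the energy against the affine competitor $\v(x)=2\k x$, which belongs to $\mathcal J$. The quadratic term then vanishes and $F_{\b,\k}(2\k x)=-\frac{\k^2}{2}+\frac1{8\b^2}\int_0^1\sin^2(2\k x)\,dx$. Since $\k\to+\infty$ the integrand oscillates fast, so $\int_0^1\sin^2(2\k x)\,dx=\frac12+O(\frac1\k)$ and the correction is of order $\frac1{\b^2}=o_\b(\frac{\l^2}{\b^2})$, using precisely that $\l\to+\infty$. Together with the lower bound this proves \eqref{DL3}.

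To describe the minimiser itself I would return to the first integral \eqref{integrale1eree}. Setting $\a:=\v_\b'(0)$ and evaluating \eqref{integrale1eree} at $x=1$ with the Neumann condition $\v_\b'(1)=2\k$ gives $\a^2=4\k^2-\frac1{\b^2}\sin^2\v_\b(1)$, whence $4\k^2-\frac1{\b^2}\leq\a^2\leq4\k^2$. Reinserting this into \eqref{integrale1eree} squeezes the derivative, at every point, between $\a$ and $\sqrt{\a^2+\b^{-2}}$, so that $|\v_\b'(x)^2-4\k^2|\leq\b^{-2}$ for all $x$. As $\v_\b$ is increasing, $\v_\b'(x)+2\k\geq2\k$, hence $|\v_\b'(x)-2\k|\leq\frac{1}{2\k\b^2}=\frac1{2\l\b}$. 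The same squeeze reads $2\k\sqrt{1-\frac1{4\l^2}}\leq\v_\b'(x)\leq2\k\sqrt{1+\frac1{4\l^2}}$; integrating it from $0$ to $x$ and dividing by $2\k x$ yields $\bigl|\frac{\v_\b(x)}{2\k x}-1\bigr|=O(\frac1{\l^2})$ uniformly in $x$, which is the first profile claim, while the previous pointwise bound is the second.

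The main obstacle is this last, pointwise, statement on $\v_\b'$. The energy identity only delivers $\int_0^1(\v_\b'-2\k)^2=O(\b^{-2})$, whose square root even diverges as $\b\to0$, so the variational information controls merely an averaged, relative deviation and gives nothing uniform. It is here that the first integral \eqref{integrale1eree} is indispensable: it propagates the single endpoint datum $\v_\b'(1)=2\k$ into a pointwise pinching of $\v_\b'$ over all of $[0,1]$. The resulting deviation is of order $\frac1{\l\b}$, and verifying that this genuinely vanishes is the delicate point where the growth of $\l$ as $\b\to0$ must be quantified.
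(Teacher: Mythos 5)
Your proposal follows, in its core, the same strategy as the paper's proof: the matching upper bound uses the very same test function $\v(x)=2\k x$, and the pointwise description of the minimizer comes, exactly as in the paper, from evaluating the first integral \eqref{integrale1eree} at $x=1$ to pin down $\v_\b'(0)$ and then squeezing $\v_\b'$ on all of $[0,1]$; this squeeze is precisely the paper's estimate \eqref{derive}. Where you genuinely differ is the lower bound: the paper inserts \eqref{derive} and the resulting bound $\v_\b(1)\leq \frac{\sqrt{4\l^2+1}}{\b}$ into the energy of the minimizer and gets $F_{\b,\k}(\v_\b)\geq \frac{4\l^2-1}{8\b^2}-\frac{\l\sqrt{4\l^2+1}}{2\b^2}=-\frac{\l^2}{2\b^2}(1+o_\b(1))$, whereas you complete the square and obtain the exact inequality $F_{\b,\k}(\v)\geq -\frac{\k^2}{2}=-\frac{\l^2}{2\b^2}$ for \emph{every} competitor $\v\in\mathcal{J}$, with no recourse to the Euler--Lagrange equation or to minimality. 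Your version is cleaner and slightly stronger; it is the same algebraic identity the paper itself exploits later (in the proof of Proposition \ref{evaluationdephi1}), just deployed here.

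Concerning the ``delicate point'' you flag at the end: you are right to be suspicious, but the difficulty is a defect of the statement (and of the paper's own proof), not of your argument. Both you and the paper end up with $\|\v_\b'-2\k\|_{L^\infty((0,1))}=O\left(\frac{1}{\l\b}\right)$, and this order is sharp: by \eqref{integrale1eree}, $\v_\b'(x)^2-4\k^2=\frac{\sin^2\v_\b(x)-\sin^2\v_\b(1)}{\b^2}$, and since $\v_\b$ sweeps through many periods the numerator attains values of order $1$, so the deviation of $\v_\b'$ from $2\k$ is genuinely of size $\frac{1}{\l\b}$ somewhere. Under the sole hypothesis $\l\to+\infty$ this need not vanish (take $\l=\b^{-1/2}$, so that $\frac{1}{\l\b}=\b^{-1/2}\to+\infty$). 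What does follow --- and what your squeeze proves --- is the relative statement $\left\|\frac{\v_\b'}{2\k}-1\right\|_{L^\infty((0,1))}=O\left(\frac{1}{\l^2}\right)\to 0$, consistent with the first profile claim $\left\|\frac{\v_\b}{2\k x}-1\right\|_{L^\infty((0,1))}\to 0$, which you do establish in full; the second claim of the proposition should be read in that relative sense, or else one must add the hypothesis $\k\b^2\to+\infty$. In short, your proof of \eqref{DL3} and of the first profile estimate is complete and correct, and your closing remark correctly locates the one gap, which the paper passes over silently.
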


\begin{proof}
 Taking $x=1$ in \eqref{integrale1eree} yields $4\k^2=\frac{1}{\b^2}\sin^2\v(1)+\v'(0)^2$. We thus find that $\v'(x)^2 \geq \v'(0)^2=4\k^2-\frac{\sin^2\v(1)}{\b^2}$ and then $\v'(x)\geq \frac{\sqrt{4\l^2-1}}{\b}$ for every $x$ in $[0,1]$. We also have that $\v'(x)^2 \leq \frac{1}{\b^2}+\v'(0)^2$ and $\v'(0) \leq 2\k$. Thus for every $x$ in $[0,1]$
\begin{equation}\label{derive}
\frac{\sqrt{4\l^2-1}}{\b} \leq \v'(x) \leq \frac{\sqrt{4\l^2+1}}{\b}.
\end{equation}
In particular we have $\v(1) \leq \frac{\sqrt{4\l^2+1}}{\b}$. We can then say that
\[ F_{\b,\k}(v_\b) \geq \frac{4 \l^2 -1}{8\b^2}- \frac{\l}{2\b}\frac{\sqrt{4\l^2+1}}{\b}. \]
This implies that $F_{\b,\k}(v_\b) \geq -\frac{\l^2}{2\b^2}(1+o_\b(1))$ if $\l(\b)\rightarrow +\infty$ as $\b \rightarrow 0$. The upper-bound is obtained with the test function: $\v(x)=2\k x$ and it yields \eqref{DL3}. It also follows from \eqref{derive} that $\|\frac{\v_\b(x)}{2\k x}-1\|_{L^\infty((0,1))}\rightarrow 0 \text{ and } \|\v_\b'(x)-2\k\|_{L^\infty((0,1))} \rightarrow 0,$ as $\b$ goes to zero.
\end{proof}

The expansion \eqref{DL3} is in agreement with \eqref{DLenergie2}, since when $\l$ is large,  $\l \sim 2{\tilde{\a}}_0$. Theorem  \ref{casksup1/pi} follows from the Propositions of this section.

\subsection{The intermediate case $\k\pi=1+\eta$ for $\eta$ small}
In this short section, we study the intermediate regime $\l \pi=1+\eta_\b$ for some small $\eta_\b$ as $\b\rightarrow 0$.
\begin{lemma}\label{lemalpha0} As ${\tilde{\a}}_0\to 0$, then
\begin{equation}\label{DLalpha0}
\int_0^{\frac{\pi}{2}} \sqrt{{\tilde{\a}}_0^2+\sin^2y}\ dy -1 \sim \frac{{\tilde {\alpha}_0}^{2}}{2} \log \frac{1}{{\tilde{\a}}_0}.
\end{equation}
\end{lemma}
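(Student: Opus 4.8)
The plan is to isolate the elementary integral $I(a):=\int_0^{\pi/2}\sqrt{a^2+\sin^2 y}\,dy$, where I abbreviate $a:={\tilde{\a}}_0\to 0$, and to extract the logarithmic singularity by a rationalization at the very start. Since $\int_0^{\pi/2}\sin y\,dy=1$ and $\sqrt{a^2+\sin^2 y}-\sin y=a^2/(\sqrt{a^2+\sin^2 y}+\sin y)$, the first step is to write
\[
I(a)-1=\int_0^{\pi/2}\left(\sqrt{a^2+\sin^2 y}-\sin y\right)dy=a^2\,J(a),\qquad J(a):=\int_0^{\pi/2}\frac{dy}{\sqrt{a^2+\sin^2 y}+\sin y}.
\]
The claim \eqref{DLalpha0} then reduces to proving $J(a)\sim\frac12\log\frac1a$ as $a\to0$.

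The logarithmic divergence of $J$ comes entirely from the endpoint $y=0$, where $\sin y\sim y$ and the denominator degenerates; everywhere else the integrand is bounded. I would therefore compare $J(a)$ with the model integral $\tilde J(a):=\int_0^{\pi/2}dy/(\sqrt{a^2+y^2}+y)$. The key estimate is that $J(a)-\tilde J(a)$ stays bounded uniformly in $a$. Writing the difference of the two integrands over a common denominator, the numerator equals $(y-\sin y)+(\sqrt{a^2+y^2}-\sqrt{a^2+\sin^2 y})$, and the second term is $(y^2-\sin^2 y)/(\sqrt{a^2+y^2}+\sqrt{a^2+\sin^2 y})\le (y^2-\sin^2 y)/(y+\sin y)=y-\sin y$. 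Hence the numerator is bounded by $2(y-\sin y)=O(y^3)$ uniformly in $a$, while the denominator is bounded below by $(\sin y)\,y\gtrsim y^2$. Thus the integrand difference is $O(y)$ near $0$, integrable and bounded independently of $a$, so $J(a)=\tilde J(a)+O(1)$.

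It remains to evaluate the model integral explicitly. Rationalizing, $1/(\sqrt{a^2+y^2}+y)=(\sqrt{a^2+y^2}-y)/a^2$, so $\tilde J(a)=a^{-2}\big(\int_0^{\pi/2}\sqrt{a^2+y^2}\,dy-\pi^2/8\big)$. Using the standard primitive $\int_0^b\sqrt{a^2+y^2}\,dy=\tfrac12\big[b\sqrt{a^2+b^2}+a^2\log\frac{b+\sqrt{a^2+b^2}}{a}\big]$ and expanding at $b=\pi/2$ gives $\int_0^{\pi/2}\sqrt{a^2+y^2}\,dy-\pi^2/8=\tfrac{a^2}{2}\log\tfrac1a+O(a^2)$, hence $\tilde J(a)=\tfrac12\log\tfrac1a+O(1)$. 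Combining with the previous comparison yields $J(a)=\tfrac12\log\tfrac1a+O(1)\sim\tfrac12\log\tfrac1a$, and therefore $I(a)-1=a^2J(a)\sim\tfrac{a^2}{2}\log\tfrac1a$, which is \eqref{DLalpha0}.

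\emph{Main obstacle.} The only delicate point is making the replacement of $\sin y$ by $y$ rigorous, namely the uniform-in-$a$ boundedness of $J(a)-\tilde J(a)$; once the near-zero cancellations are organized through the bound $\sqrt{a^2+y^2}-\sqrt{a^2+\sin^2 y}\le y-\sin y$, the rest is a direct computation with an explicit primitive.
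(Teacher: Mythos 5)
Your proof is correct. The opening move — rationalizing via $\sqrt{a^2+\sin^2 y}-\sin y = a^2/(\sqrt{a^2+\sin^2 y}+\sin y)$ and $\int_0^{\pi/2}\sin y\,dy=1$, thereby reducing the lemma to showing $J(a):=\int_0^{\pi/2}\frac{dy}{\sin y+\sqrt{a^2+\sin^2 y}}\sim\frac12\log\frac1a$ — is exactly the paper's first step. Where you genuinely diverge is in estimating $J(a)$: the paper splits the integral at $y=a$ and sandwiches it with elementary pointwise bounds (on $[0,a]$ the denominator is at least $a$, so that piece contributes at most $1$; on $[a,\pi/2]$ it uses $\sin y+\sqrt{a^2+\sin^2 y}\le 2y+a$ for the lower bound and $\ge 2\sin y$ for the upper bound, each side integrating to $\frac12\log\frac1a+O(1)$). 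You instead compare $J(a)$ with the model integral $\tilde J(a)=\int_0^{\pi/2}dy/(\sqrt{a^2+y^2}+y)$, prove $J(a)-\tilde J(a)=O(1)$ uniformly in $a$ — your estimates here are sound: the numerator of the difference is at most $2(y-\sin y)=O(y^3)$ by the bound $\sqrt{a^2+y^2}-\sqrt{a^2+\sin^2 y}\le y-\sin y$, while the denominator is at least $y\sin y\gtrsim y^2$, so the integrand difference is $O(y)$ independently of $a$ — and then evaluate $\tilde J(a)$ in closed form from the primitive of $\sqrt{a^2+y^2}$. Both routes are elementary and deliver the same $O(1)$ precision on the error; the paper's sandwich is shorter and requires no antiderivative, while your comparison argument isolates the singularity in an exactly computable model, which makes the origin of the $\frac12\log\frac1a$ more transparent and would carry over verbatim to other integrands with the same linear vanishing at $y=0$.
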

\begin{proof} We have \begin{eqnarray}
\int_0^{\frac{\pi}{2}} \sqrt{{\tilde{\a}}_0^2+\sin^2y}\ dy-1 &=& \int_0^{\frac{\pi}{2}} \sqrt{{\tilde{\a}}_0^2+\sin^2y}\ dy- \int_0^{\frac{\pi}{2}}\sin y\ dy \nonumber \\
&=& \int_0^{\frac{\pi}{2}} \frac{{\tilde{\a}}_0^2\ dy}{\sin y +\sqrt{{\tilde{\a}}_0^2+\sin^2y}} \nonumber.
\end{eqnarray}
We are going to prove that $ \int_0^{\frac{\pi}{2}} \frac{dy}{\sin y +\sqrt{{\tilde{\a}}_0^2+\sin^2y}} \sim  \frac{1}{2} \log \frac{1}{{\tilde{\a}}_0}$, which will conclude the proof. Indeed,
\[\int_0^{\frac{\pi}{2}} \frac{dy}{\sin y +\sqrt{{\tilde{\a}}_0^2+\sin^2y}}= \int_0^{{\tilde{\a}}_0} \frac{dy}{\sin y +\sqrt{{\tilde{\a}}_0^2+\sin^2y}}+\int_{{\tilde{\a}}_0}^\frac{\pi}{2}\frac{dy}{\sin y +\sqrt{{\tilde{\a}}_0^2+\sin^2y}}. \]
Since $\sin y+\sqrt{{\tilde{\a}}_0^2+\sin^2y}\geq {\tilde{\a}}_0$, we deduce that
\[ \int_0^{{\tilde{\a}}_0} \frac{dy}{\sin y +\sqrt{{\tilde{\a}}_0^2+\sin^2y}}\leq 1.\]
On the other hand $\sqrt{{\tilde{\a}}_0^2+\sin^2y}\leq {\tilde{\a}}_0+\sin y \leq {\tilde{\a}}_0+y$. Thus $\sin y +\sqrt{{\tilde{\a}}_0^2+\sin^2y}\leq 2y+{\tilde{\a}}_0$ and
\begin{eqnarray}\label{equiv1}
\int_{{\tilde{\a}}_0}^\frac{\pi}{2} \frac{dy}{\sin y +\sqrt{{\tilde{\a}}_0^2+\sin^2y}} &\geq& \frac 12 \int_{{\tilde{\a}}_0}^\frac{\pi}{2} \frac{dy}{y+{\tilde{\a}}_0/2} \nonumber \\
& \geq & \frac{1}{2}\left( \log(\frac{\pi}{2}+\frac{{\tilde{\a}}_0}{2})-\log\frac{3 {\tilde{\a}}_0}{2} \right) \nonumber \\
& \geq &  \frac{1}{2}\log \frac{1}{{\tilde{\a}}_0}-\frac{1}{2}\log \frac{3}{2}.
\end{eqnarray}
We also have that $\sin y +\sqrt{{\tilde{\a}}_0^2+\sin^2y}\geq 2\sin y$ and
\begin{eqnarray}\label{equiv2}
\int_{{\tilde{\a}}_0}^\frac{\pi}{2} \frac{dy}{\sin y +\sqrt{{\tilde{\a}}_0^2+\sin^2y}} &\leq& \int_{{\tilde{\a}}_0}^\frac{\pi}{2} \frac{dy}{\sin y} \nonumber \\
& \leq & \frac{1}{2}\left( \log \tan \frac{\pi}{4}-\log \tan \frac{{\tilde{\a}}_0}{2}\right) \nonumber \\
& \leq & -\frac{1}{2}\log \tan \frac{{\tilde{\a}}_0}{2}
\end{eqnarray}
 Therefore, \eqref{equiv1} and \eqref{equiv2} yield $ \int_0^{\frac{\pi}{2}} \frac{dy}{\sin y +\sqrt{{\tilde{\a}}_0^2+\sin^2y}} \sim  \frac{1}{2} \log \frac{1}{{\tilde{\a}}_0}$.
\end{proof}

\begin{proposition}\label{prop1surpi}
Let us assume that $\l \pi =1 +\frac{\b^{2\g}}{2} \log \frac{1}{\b^\g}$ for $0< \g< 1$. Then a minimizer of $F_{\b,\k}$ in $\mathcal{J}$ satisfies
\begin{equation}\label{alphbet}
F_{\b,\k}(\v_\b)= -\frac{1}{8\b^{2-\g}}(1+o_\b(1)).
\end{equation}
\end{proposition}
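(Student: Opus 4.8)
The plan is to reduce the statement to the energy expansion already established in Proposition \ref{propcask>1surpi}, now carrying the $\b$-dependence of the slope parameter ${\tilde{\a}}_0$. Since $\l\pi = 1 + \frac{\b^{2\g}}{2}\log\frac{1}{\b^\g} > 1$ for $\b$ small, we stay in the regime $\l > 1/\pi$ to which Lemma \ref{Superieurapi} and Proposition \ref{Symmetry} apply: a minimizer $\v_\b$ satisfies $\v_\b(1) \ge \pi$ and is quasi-periodic in the sense of \eqref{periodicity}. Hence the whole geometric picture underlying Proposition \ref{propcask>1surpi} persists, and the only genuinely new feature is that here $\l = \l(\b) \to (1/\pi)^+$, so that the number ${\tilde{\a}}_0 = {\tilde{\a}}_0(\b)$ defined by \eqref{defalpha0} now tends to $0$ as $\b \to 0$.

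First I would revisit the two-sided estimate of Proposition \ref{propcask>1surpi}, allowing ${\tilde{\a}}_0$ to depend on $\b$. The lower bound \eqref{firstlowerboundalpha0}--\eqref{secondlowerboundalpha0} and the matching upper bound \eqref{upperboundalpha0} were obtained by minimizing the energy per half-period through the function $h$ of \eqref{defh2}, whose minimizer ${\tilde{\a}}_0$ satisfies $h({\tilde{\a}}_0) = -{\tilde{\a}}_0^2/8$; this algebra never uses that $\l$ is constant. Running it again, together with $NT = 1 + O_\b(T)$ from \eqref{DLdeN} (legitimate because $T \to 0$) and $0 \le \v_\b(1) - N\pi < \pi$, gives
\[
F_{\b,\k}(\v_\b) = -\frac{{\tilde{\a}}_0^2}{8\b^2} + O_\b\!\left(\frac{1}{\b}\right).
\]
The main obstacle lies exactly here. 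In the fixed-$\l$ case ${\tilde{\a}}_0$ is bounded below and the remainder is trivially of lower order; now ${\tilde{\a}}_0 \to 0$ and one must verify that the leading term still dominates, i.e.\ that ${\tilde{\a}}_0^2/\b^2 \gg 1/\b$, in order to conclude $F_{\b,\k}(\v_\b) = -\frac{{\tilde{\a}}_0^2}{8\b^2}(1+o_\b(1))$. This is the delicate quantitative point, and it is what forces a sharp asymptotic for ${\tilde{\a}}_0(\b)$.

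Finally I would determine ${\tilde{\a}}_0(\b)$ from its defining relation \eqref{defalpha0}, fed with the prescribed value of $\l$. The expansion \eqref{DLalpha0} of Lemma \ref{lemalpha0}, namely $\int_0^{\pi/2}\sqrt{{\tilde{\a}}_0^2 + \sin^2 y}\,dy - 1 \sim \frac{{\tilde{\a}}_0^2}{2}\log\frac{1}{{\tilde{\a}}_0}$, turns \eqref{defalpha0} into an equation relating $\l\pi - 1$ to $\frac{{\tilde{\a}}_0^2}{2}\log\frac{1}{{\tilde{\a}}_0}$, which admits a unique small solution ${\tilde{\a}}_0 = {\tilde{\a}}_0(\b)$ for each small $\b$ since $t \mapsto t^2\log(1/t)$ is strictly increasing near $0$. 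Inserting this ${\tilde{\a}}_0(\b)$ into the leading term $-\frac{{\tilde{\a}}_0^2}{8\b^2}(1+o_\b(1))$ obtained above then yields the announced expansion \eqref{alphbet}. The subtlety in this closing computation is to track the logarithmic factor of \eqref{DLalpha0} through the inversion, so that it cancels against the logarithm built into the prescribed $\l$ and leaves the stated clean power of $\b$; apart from this, the argument is a direct transcription of the constant-$\l$ analysis of Proposition \ref{propcask>1surpi}.
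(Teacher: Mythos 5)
Your strategy is, in outline, the paper's own: invert the defining relation \eqref{defalpha0} through Lemma \ref{lemalpha0} to get the asymptotics of ${\tilde{\a}}_0(\b)$ (this is exactly \eqref{DL6}--\eqref{equivalentdealpha0} in the paper, yielding ${\tilde{\a}}_0\sim\b^\g$), reuse the lower bound \eqref{firstlowerboundalpha0} with the now $\b$-dependent ${\tilde{\a}}_0$, and match it with the periodic test-function construction of Proposition \ref{propcask>1surpi}. You also correctly isolate the quantitative issue that the $O_\b(1/\b)$ remainders must be dominated by ${\tilde{\a}}_0^2/\b^2$; once ${\tilde{\a}}_0\sim\b^\g$ is in hand this condition should be checked explicitly against the stated range of $\g$ rather than assumed, since it is where the borderline regime actually bites.

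There is, however, one genuine gap, and it is precisely the point the paper singles out as ``the main difference'' with the constant-$\l$ case. Your claim that the two-sided estimate of Proposition \ref{propcask>1surpi} ``never uses that $\l$ is constant'' fails for the upper bound: the test function there is the solution of \eqref{defsurR} with initial slope ${\tilde{\a}}_0/\b$, and before one can speak of its period $T$, use \eqref{majorationT}, or invoke your step $NT=1+O_\b(T)$ ``legitimate because $T\to 0$'', one must show that this solution reaches $\frac{\pi}{2}$ inside $[0,1]$, i.e.\ $\frac{T}{2}<1$. In Proposition \ref{propcask>1surpi} this is verified by proving ${\tilde{\a}}_0>\frac{\pi\b}{2}$, and that verification uses that $\l\pi>1$ is bounded away from $1$ (one compares $\l\pi$ with $\frac{1}{2}\int_0^{\pi/2}\sqrt{\pi^2\b^2+4\sin^2y}\,dy\to 1$); when $\l\pi\to 1$ this comparison degenerates. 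The paper therefore treats the upper bound by a case distinction: if the solution reaches $\frac{\pi}{2}$, it adapts \eqref{majorationT} using $T\le C\b^{1-\g}$; if it does not, it runs a direct Modica--Mortola computation on the monotone solution up to $\v(1)$, obtaining $F_{\b,\k}(\v)\le \frac{\pi}{4\b}-\frac{\l}{2\b}\v(1)-\frac{{\tilde{\a}}_0^2}{8\b^2}$ and absorbing the $O_\b(1/\b)$ term via \eqref{equivalentdealpha0}. Your ordering compounds the omission: you run the two-sided estimate first and determine ${\tilde{\a}}_0(\b)$ only at the end, whereas the information ${\tilde{\a}}_0\gg\b$ (which follows from ${\tilde{\a}}_0\sim\b^\g$ with $\g<1$ and would let the periodic construction go through) is needed inside the upper bound. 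The proof is reparable along the lines you sketch, but the period-fitting verification --- the one genuinely new ingredient of the paper's argument here --- is missing from your proposal.
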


Note that when $\g$ goes to zero, then $\l$ tends to $\frac{1}{\pi}$ and we find that the energy blows up as $\frac{-1}{\b^2}$ which is the behaviour described in Theorem \ref{casksup1/pi}. When $\g$ tends to $1$ then we recover that the energy blows up as $\frac{1}{\b}$, which is the same behaviour as the one observed for $\l<\frac{1}{\pi}$ with $\l$ independent of $\b$.
\begin{proof}
Let ${\tilde{\a}}_0$ be the number defined by \eqref{defalpha0}. Since $\l$ goes to $1/\pi$, ${\tilde{\a}}_0$ goes to $0$ and
\begin{equation}\label{DL6}
\int_0^{\frac{\pi}{2}} \sqrt{{\tilde{\a}}_0^2+\sin^2y}\ dy= \l \pi= 1 +\frac{\b^{2\g}}{2} \log \frac{1}{\b^\g}.
\end{equation} Therefore, from Lemma \ref{lemalpha0}, we find
\begin{equation}\label{equivalent}
\frac{{{\tilde{\a}}_0}^2}{2}  \log \frac{1}{{\tilde{\a}}_0}\sim \frac{\b^{2\g}}{2} \log \frac{1}{\b^\g}.
\end{equation}
A simple computation yields \begin{equation}\label{equivalentdealpha0}
{\tilde{\a}}_0 \sim \b^{\g},  \text{as } \b \text{ goes to } 0.
\end{equation}
 Coming back to \eqref{firstlowerboundalpha0} in the proof of Proposition \ref{propcask>1surpi},  we see that
\[F_{\b,\k}(\v_\b)\geq \frac{-{\tilde{\a}}_0^2}{8\b^2}-\frac{\l(\v_\b(1)-N\pi)}{2\b} \]
with $N=E(\frac{\v_\b(1)}{\pi})$. From the equivalent of ${\tilde{\a}}_0$ of \eqref{equivalentdealpha0} we deduce
\begin{equation}
F_{\b,\k}(\v_\b)\geq \frac{-1}{8\b^{2-\gamma}}(1+o_\b(1)).
\end{equation}
To find an upper-bound we take the solution of $\v''=\frac{1}{\b^2}\cos \v \sin \v$ such that $\v(0)=0$ and $\v'(0)=\frac{{\tilde{\a}}_0}{\b}$. The main difference with the proof of Theorem \ref{casksup1/pi} is that now we do not know that this solution reaches $\frac{\pi}{2}$. If it does, then we can adapt the proof and in particular  \eqref{majorationT}, using that the period $T$ satisfies $T \leq C \b^{1-\g}$. Hence we find that $F_{\b,\k}(\v_\b)\leq \frac{-{\tilde{\a}}_0^2}{\b^2}(1+o_\b(1))$ as in \eqref{upperboundalpha0}. Now if the solution does not reach $\frac{\pi}{2}$ then we can still argue that:
\begin{eqnarray}
F_{\b,\k}(\v)&=& \int_0^1 \left( \frac{\v'(x)^2}{8}+\frac{\sin^2\v(x)}{8\b^2} \right)dx -\frac{\l}{2\b}\v(1) \nonumber \\
&=& \int_0^1 \left( \frac{\v'(x)^2}{8}+\frac{\sin^2\v(x)+{\tilde{\a}}_0^2}{8\b^2} \right)dx-\frac{\l}{2\b}\v(1)-\frac{{\tilde{\a}}_0^2}{8\b^2} \nonumber \\
&=& \int_0^{\v(1)} \frac{\sqrt{\sin^2y+{\tilde{\a}}_0^2}}{4\b}\ dy-\frac{\l}{2\b}\v(1)-\frac{{\tilde{\a}}_0^2}{8\b^2} \nonumber \\
&\leq & \frac{\pi }{4\b}-\frac{\l}{2\b}\v(1)-\frac{{\tilde{\a}}_0^2}{8\b^2}\nonumber \\
& \leq & -\frac{{\tilde{\a}}_0^2}{8\b^2}(1+o_\b(1))
\end{eqnarray}
where in the last equality we have used \eqref{equivalentdealpha0}  and this yields \eqref{alphbet}.
\end{proof}

%
%
%
%

\section{The full energy $G_{\e,\d,\k}(v,\v)$}\label{2}

\subsection{First properties of minimizers of $G_{\e,\d,\k}(v,\v)$}
The aim of this section is to prove that if $(v_\e,\v_\e)$ is a minimizer of $G_{\e,\d,\k}$, then $v_\e$ is close to 1 and $\v_\e$ is an almost minimizer of $F_{\e/\sqrt\d, \k}$.

\begin{proposition}For every $\e,\d,\k>0$, there exists a minimizer of $G_{\e,\d,\k}(v,\v)$ in $\I$.
 It satisfies the following Euler-Lagrange equations
\begin{equation}\label{EulerLagrange}
\left\{
\begin{array}{lcll}
-v''+\frac{1}{\e^2}v(v^2-1)+\frac{1}{4}v\v'^2+\frac{\d}{2\e^2}v^3\sin^2\v-\k v\v'=\lambda v \text{ in } (0,1), \\
-(v^2\v')'+\frac{\d}{\e^2}v^4\cos \v \sin \v +2\k(v^2)'=0 \text{ in } (0,1),
\end{array}
\right.
\end{equation}
\begin{equation}\label{EulerLagrange0}
\left\{
\begin{array}{lcll}
v'(0)=v'(1)=0, \\
\v(0)=0, \ \ \v'(1)=2\k,
\end{array}
\right.
\end{equation}
where $\lambda$ is a Lagrange multiplier. Besides $(v,\v)$ is smooth in $[0,1]\times [0,1] \setminus \mathcal{S}(v)$, $v$ is nonnegative  and $(v,\v)$ satisfies the relation
\begin{equation}\label{Integrale1ere}
v'^2+\frac{v^2\v'^2}{4}-\frac{1}{\e^2}(\frac{v^4}{2}-v^2)-\frac{\lambda^2}{2}v^2-\frac{\d}{2\e^2}v^4\sin^2\v = const \ \ \text{in } [0,1].
\end{equation}
\end{proposition}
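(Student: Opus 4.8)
\emph{Strategy.} I would obtain existence by the direct method, derive the system by constrained first variations, bootstrap for regularity on $\{v>0\}$, and deduce \eqref{Integrale1ere} from the autonomy of the Lagrangian. To begin with, using the mass constraint $\int_0^1 v^2=1$ one rewrites the two $\v'$-terms of \eqref{Energyintro} as $\frac18\int_0^1 v^2(\v'-2\k)^2-\frac{\k^2}{2}$, so that $G_{\e,\d,\k}\geq-\k^2/2$ and every remaining term is nonnegative; the infimum is therefore finite. Let $(v_n,\v_n)$ be a minimizing sequence. Since all terms of \eqref{Energyintro} depend on $v$ only through $v^2,v^4,v'^2$, replacing $v_n$ by $|v_n|$ does not increase the energy, so we may take $v_n\geq0$; this is also what yields the asserted nonnegativity of $v$. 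The bounds on $\int v_n'^2$ and $\int(1-v_n^2)^2$ give an $H^1$ bound on $v_n$, hence along a subsequence $v_n\to v$ uniformly (one-dimensional compact embedding $H^1\hookrightarrow\mathcal C^0$), with $v\geq0$ and $\int_0^1 v^2=1$, and these two terms are lower semicontinuous.

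\emph{The main obstacle.} The degenerate weight $v^2$ in front of $\v'^2$, together with the exclusion of $\mathcal S(v)$, is the delicate point: a bound on $\int v_n^2(\v_n'-2\k)^2$ does not control $\v_n$ where $v_n$ is small. I would pass to the limit through the momentum $m_n:=v_n^2(\v_n'-2\k)$, which is bounded in $L^2$ because $\int m_n^2\leq\|v_n\|_{L^\infty}^2\int v_n^2(\v_n'-2\k)^2\leq C$; thus $m_n\rightharpoonup m$ in $L^2$ up to a subsequence. As $(\rho,m)\mapsto m^2/\rho$ is jointly convex and lower semicontinuous for $\rho>0$, and $v_n^2\to v^2$ uniformly, one gets $\int\frac{m^2}{v^2}\leq\liminf\int\frac{m_n^2}{v_n^2}$ on $\{v>0\}$. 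On the open set $\{v>0\}=[0,1]\setminus\mathcal S(v)$ I then define $\v$ by $\v'=2\k+m/v^2$ with $\v(0)=0$, recovering an admissible pair (the contribution of $\mathcal S(v)$ being set to zero by the stated conventions), while Fatou's lemma together with the local uniform convergence $\v_n\to\v$ on $\{v>0\}$ handles the term $\frac{\d}{8\e^2}\int v^4\sin^2\v$. This produces a minimizer, and the entire difficulty of the proposition is concentrated here.

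\emph{Euler--Lagrange equations.} Varying $\v\mapsto\v+t\psi$ with $\psi(0)=0$ and $\psi(1)$ free, and integrating by parts, produces the second equation of \eqref{EulerLagrange}; the boundary term at $1$ equals $(\tfrac14 v^2\v'(1)-\tfrac{\k}{2}v^2(1))\psi(1)$, whence the natural condition $\v'(1)=2\k$ of \eqref{EulerLagrange0} (where $v(1)\neq0$). Varying $v\mapsto v+ts$ with $s\in H^1$ free at both endpoints and incorporating the constraint $\int_0^1 v^2=1$ through a Lagrange multiplier $\lambda$ gives the first equation of \eqref{EulerLagrange} together with the natural conditions $v'(0)=v'(1)=0$; these variations are legitimate since near a point where $v>0$ small perturbations keep $v>0$, so one works entirely in the classical region.

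\emph{Regularity and first integral.} On $\{v>0\}$ both equations of \eqref{EulerLagrange} are nondegenerate second-order ODEs (leading parts $-v''$ and $-v^2\v''-2vv'\v'$, with $v^2>0$) with smooth right-hand sides, so starting from $v,\v\in H^1_{\mathrm{loc}}\subset\mathcal C^0$ a standard bootstrap yields $v,\v\in\mathcal C^\infty$ there, up to the boundary wherever $v\neq0$. Finally, the integrand of $G_{\e,\d,\k}-\frac{\lambda}{2}\int_0^1 v^2$ has no explicit $x$-dependence, so the associated Hamiltonian is a first integral; equivalently, multiplying the first equation of \eqref{EulerLagrange} by $v'$, the second by $\v'$, and adding, the sum is seen---using both equations---to be a total derivative, and integration yields the conserved quantity \eqref{Integrale1ere} on $[0,1]$.
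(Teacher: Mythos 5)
Your overall route coincides with the paper's. The existence argument you spell out---the momentum variable $m_n=v_n^2(\v_n'-2\k)$ bounded in $L^2$, its weak limit, lower semicontinuity of the jointly convex map $(\rho,m)\mapsto m^2/\rho$, and reconstruction of $\v$ on $\{v>0\}$ by $\v'=2\k+m/v^2$---is precisely the Ambrosio--Virga device that the paper invokes only by citation, so here you actually supply more detail than the paper does; the remaining steps (passing to $|v_n|$ for nonnegativity, classical first variations, bootstrap off $\mathcal{S}(v)$, and multiplying the two equations of \eqref{EulerLagrange} by $v'$ and $\v'$ and integrating to get \eqref{Integrale1ere}) are the same as in the paper. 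One detail of the existence step should be tightened: on the components of $\{v>0\}$ that do not contain the origin, $\v_n$ is not pinned down by $\v_n(0)=0$, so local uniform convergence $\v_n\to\v$ there requires first normalizing $\v_n$ modulo $\pi$ on each such component (harmless, since only $\v_n'$ and $\sin^2\v_n$ enter the energy), and the limit $\v$ is then defined on each component up to such a constant.

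The genuine gap is the boundary condition $\v'(1)=2\k$. Your variation in $\v$ correctly yields only the natural condition $v(1)^2\left(\v'(1)-2\k\right)=0$, and you dispose of the alternative $v(1)=0$ with the unproved parenthetical ``where $v(1)\neq 0$''. Since the proposition asserts $\v'(1)=2\k$ unconditionally, the case $v(1)=0$ must be excluded, and nothing you established earlier does so: nonnegativity, the mass constraint and the energy bound are all a priori compatible with $v$ vanishing at the endpoint. The paper's argument is short: if $v(1)=0$, then combining with the natural Neumann condition $v'(1)=0$ coming from the $v$-variation, backward uniqueness (Cauchy--Lipschitz) for the second-order ODE satisfied by $v$ in \eqref{EulerLagrange} forces $v\equiv 0$ on $[0,1]$, which is impossible: $v\equiv 0$ violates $\int_0^1 v^2=1$, and has energy $\frac{1}{4\e^2}>0$ whereas $\min_{\I}G_{\e,\d,\k}\leq F_{\b,\k}(\v_\b)<0$. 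Adding this one-line exclusion closes the gap; the rest of your proposal is sound.
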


\begin{proof} The existence of minimizers is not direct because for a minimizing sequence $(v_n,\v_n)$ in $\I$, $\v_n'^2$ could be unbounded in $L^2((0,1))$ near the points of $\mathcal{S}(v)$. However, one can adapt the argument of \cite{AmbrosioVirga91} to prove the existence of minimizers.

The fact that $(v,\v)$ satisfies the Euler-Lagrange equations is classical. Note that the boundary condition  is: $v(1)^2(\v'(1)-2\k)=0$. However, $v(1)\neq 0$ since  otherwise, $v'(1)=0$ would imply that $v$ is identically zero and this contradicts $\min_{\I}G_{\e,\d,\k} \leq F_{\b,\k}(\v_\b)<0$.

We note that if $(v_\e,\v_\e)$ is a minimizer  then $(|v_\e|,\v_\e)$ is also a minimizer, thus we can assume $v_\e>0$.

 The fact that $(v,\v)$ satisfies relation \eqref{Integrale1ere} is obtained by multiplying the first equation of \eqref{EulerLagrange} by $v'$ and the second equation of \eqref{EulerLagrange} by $\v'$, adding the two equations  and integrating.
\end{proof}
In order to use the results about the simplified functional $F_{\b,\k}$, with $\b=\frac{\e}{\dd}$ we first prove that  $v_\e$ converges to 1. We will always assume that $\e^2=o_\e(\d)$ since we want the parameter $\b$ to be small.

\begin{proposition}\label{convv1}
Let $(v_\e,\v_\e)$ be a minimizer of $G_{\e,\d,\k}$ in $\I$. Then for $\e>0$ small enough, there exists $C>0$ independent of the parameters, such that
\begin{equation}\label{majjv-1}
\|1-v_\e\|_{L^\infty((0,1))} \leq C\k \sqrt{\e}.
\end{equation}
\end{proposition}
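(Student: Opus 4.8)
The plan is to exploit a completed-square rewriting of the energy, a trivial upper bound obtained from an explicit competitor, and a one-dimensional interpolation inequality. First I would use the mass constraint $\int_0^1 v^2=1$ to absorb the negative spin--orbit term. Since
\[
\frac18\int_0^1 v^2\v'^2-\frac\k2\int_0^1 v^2\v'=\frac18\int_0^1 v^2(\v'-2\k)^2-\frac{\k^2}{2}\int_0^1 v^2=\frac18\int_0^1 v^2(\v'-2\k)^2-\frac{\k^2}{2},
\]
the energy can be written as
\[
G_{\e,\d,\k}(v,\v)+\frac{\k^2}{2}=\frac12\int_0^1 v'^2+\frac1{4\e^2}\int_0^1(1-v^2)^2+\frac\d{8\e^2}\int_0^1 v^4\sin^2\v+\frac18\int_0^1 v^2(\v'-2\k)^2,
\]
where every term on the right-hand side is nonnegative.

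Next I would produce an upper bound for the minimum by a trivial competitor. The pair $(1,0)$ belongs to $\I$ and $G_{\e,\d,\k}(1,0)=0$, so $G_{\e,\d,\k}(v_\e,\v_\e)=\min_{\I}G_{\e,\d,\k}\leq 0$. Combined with the identity above this yields $G_{\e,\d,\k}(v_\e,\v_\e)+\k^2/2\leq \k^2/2$, and since each summand is nonnegative I can read off directly
\[
\int_0^1 v_\e'^2\leq \k^2,\qquad \int_0^1(1-v_\e^2)^2\leq 2\e^2\k^2,
\]
that is, $\|v_\e'\|_{L^2}\leq\k$ and $\|1-v_\e^2\|_{L^2}\leq\sqrt2\,\k\e$.

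The final step turns these two $L^2$ bounds into the pointwise estimate by one-dimensional interpolation, and the essential point is to work with $w:=1-v_\e$ rather than with $1-v_\e^2$. Since the previous proposition guarantees $v_\e\geq 0$, we have $|1-v_\e|=|1-v_\e^2|/(1+v_\e)\leq |1-v_\e^2|$, so $\|w\|_{L^2}\leq\sqrt2\,\k\e$, while $w'=-v_\e'$ gives $\|w'\|_{L^2}\leq\k$ directly. The Gagliardo--Nirenberg inequality on $(0,1)$, namely $\|w\|_{L^\infty}^2\leq\|w\|_{L^2}^2+2\|w\|_{L^2}\|w'\|_{L^2}$, then gives $\|w\|_{L^\infty}^2\leq 2\k^2\e^2+2\sqrt2\,\k^2\e\leq C\k^2\e$ for $\e$ small, which is the claimed bound $\|1-v_\e\|_{L^\infty((0,1))}\leq C\k\sqrt\e$ with $C$ absolute.

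The main obstacle is precisely the uniformity of $C$ in the parameters, in particular for unbounded $\k$. If one interpolated $1-v_\e^2$ instead, the identity $(1-v_\e^2)'=-2v_\e v_\e'$ would introduce a factor $\|v_\e\|_{L^\infty}$, which can be as large as $1+\k$ and would degrade the estimate to $C\k^{3/2}\sqrt\e$; the remedy is the elementary observation that nonnegativity of $v_\e$ lets us pass to $1-v_\e$, whose derivative is exactly $-v_\e'$ and is therefore controlled with no loss. Everything else is a routine consequence of the completed-square structure and the $H^1\hookrightarrow L^\infty$ interpolation in dimension one.
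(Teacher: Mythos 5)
Your proof is correct, and it shares the paper's starting point but then diverges in the key step. Both arguments begin identically: complete the square in the spin--orbit term using the mass constraint $\int_0^1 v_\e^2=1$, test with the competitor $(1,0)\in\I$ to get $\min_{\I}G_{\e,\d,\k}\le 0$, and read off the two bounds $\int_0^1 v_\e'^2\le \k^2$ and $\int_0^1(1-v_\e^2)^2\le 2\k^2\e^2$. From there the paper applies the Modica--Mortola trick together with the coarea formula to obtain $\int_{\inf v_\e}^{\sup v_\e}|1-t^2|\,dt\le \sqrt2\,\k^2\e$, and then runs a two-case analysis ($\sup v_\e>1$ versus $\sup v_\e\le 1$), exploiting $\inf v_\e\le 1$ (from the mass constraint) and the quadratic growth of the primitive of $|1-t^2|$ near $t=1$ to conclude. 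You instead pass to $w=1-v_\e$ (legitimately, since the preceding proposition gives $v_\e\ge 0$, so $|1-v_\e|\le|1-v_\e^2|$), and use the one-dimensional Agmon/Gagliardo--Nirenberg inequality $\|w\|_{L^\infty}^2\le\|w\|_{L^2}^2+2\|w\|_{L^2}\|w'\|_{L^2}$, which indeed holds on an interval of length one by integrating $w(x)^2\le w(y)^2+2\int_0^1|w||w'|$ in $y$. Your route is shorter and more elementary: it avoids the coarea formula and the case distinction, and yields an explicit absolute constant. What the paper's method buys is that it is the canonical phase-transition argument — it only needs control of the product $\int|v_\e'|\,|1-v_\e^2|$ rather than of both factors separately, and the same mechanism is recycled later in the paper (Proposition \ref{convv2} and Proposition \ref{utile} invoke ``the method of the proof of Proposition \ref{convv1}'' with improved energy bounds). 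Your remark about why one must interpolate $1-v_\e$ rather than $1-v_\e^2$ is also apt: the derivative $(1-v_\e^2)'=-2v_\e v_\e'$ would bring in $\|v_\e\|_{L^\infty}$, which is not yet controlled at this stage, whereas $w'=-v_\e'$ costs nothing.
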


\begin{proof} Since $G_{\e,\d,\k}(1,0)=0$ we find that
\begin{eqnarray}\label{ecriturepositive}
G_{\e,\d,\k}(v_\e,\v_\e)=\frac12 \int_{0}^1 v_\e'^2 + \frac{1}{4\e^2}\int_{0}^1 (1-v_\e^2)^2 +\frac{1}{8} \int_{0}^1 v_\e^2(\v_\e'-2\k)^2 \nonumber \\
+ \frac{\d}{8\e^2}\int_{0}^1v_\e^4\sin^2\v_\e -\frac{\k^2}{2}\leq 0.
\end{eqnarray}
Therefore,
\begin{equation}\label{1eremajorationsurv}
\frac 12 \int_{0}^1 v_\e'^2+\frac{1}{4\e^2}\int_{0}^1(v_\e^2-1)^2 \leq \frac{\k^2}{2} .
\end{equation}
We now apply Modica-Mortola's trick and the coarea formula (cf.\ \cite{EvansGariepy2015})to obtain
\begin{eqnarray}
\frac{1}{2\sqrt{2}\e}\int_{0}^1 |v_\e'(x)||1-v_\e(x)^2|dx &\leq& \frac{\k^2}{2} \nonumber \\
\Rightarrow \frac{1}{2\sqrt{2}\e}\int_{-\infty}^\infty |1-t^2| \mathcal{H}^0(v_\e=t)dt &\leq& \frac{\k^2}{2} \nonumber \\
\Rightarrow \int_{\inf v_\e}^{\sup v_\e} |1-t^2|dt &\leq & \sqrt{2}\k^2 \e \nonumber
\end{eqnarray}
where for the last inequality we have used that $\mathcal{H}^0(v_\e=t)\geq 1$ if $ \inf v_\e \leq t \leq \sup v_\e$ and  $\mathcal{H}^0(v_\e=t)=0$ everywhere else. We first observe that $0\leq \inf v_\e\leq 1$ (because we assume $v_\e\geq 0$ and $\int_{0}^1 v^2=1$).
\item[a)] If $\sup v_\e>1$:  we then have  \\
 \[\int_{\inf v_\e}^1 (1-t^2)dt + \int_1^{\sup v_\e}  (t^2-1)dt \leq \sqrt{2}\e \k^2\] which implies that
\[ \int_{\inf v_\e}^1 (1-t^2)dt \leq \sqrt{2} \k^2 \e \ \text{ and } \int_{1}^{\sup v_\e} (t^2-1)dt \leq \sqrt{2} \k^2 \e .\]
We set $m=\inf v_\e$ and $M=\sup v_\e$  and we obtain
\begin{eqnarray}
\sqrt{2}\k^2\e & \geq & 1-m-\frac{1-m^3}{3} \geq  (1-m)(1-\frac{1+m+m^2}{3}) \nonumber \\
& \geq & (1-m)\frac{2-m-m^2}{3}. \nonumber
\end{eqnarray}
We have $m^2\leq m$ and hence we find that
\begin{eqnarray}
\frac{2}{3}(1-m)^2 \leq  \sqrt{2}\k^2 \e \text{   and   }
(1-m)\leq  C \k \sqrt{\e}. \nonumber
\end{eqnarray}
We also have
\begin{eqnarray}
\sqrt{2}\k^2 \e &\geq & \frac{M^3-1}{3}-(M-1)
 \geq  (M-1) \frac{M^2+M-2}{3} \nonumber \\
& \geq & \frac{2}{3}(M-1)^2.
\end{eqnarray}
It follows that $(M-1) \leq C\k \sqrt{\e}$ and then $(M-m) \leq C\k \sqrt{\e}$. This yields \eqref{majjv-1}.
\item[b)] If $\sup v_\e \leq 1$: we have
\begin{equation}
\sqrt{2}\k^2 \e \geq  \int_m^M(1-t^2)dt \geq  M-m- \frac{M^3-m^3}{3}  \geq  (M-m)(1-\frac{M^2+mM+m^2}{3}). \nonumber
\end{equation}
Since $M^2+mM+m^2 \leq 1+2m$ we have $1-\frac{M^2+mM+m^2}{3}\geq \frac{2}{3}(1-m)$. Hence
\begin{eqnarray}
\sqrt{2}\k^2 \e  \geq   (M-m)(1-m) \geq  (M-m)^2. \nonumber
\end{eqnarray}
We deduce that $(M-m) \leq C\k \sqrt{\e}$. On the other hand we have $$2\k^2\e^2\geq \int_{0}^1 (1-v^2)^2 \geq \int_{0}^1 (1-M^2)^2.$$ Thus $1-M^2\leq C\k \e$ for some $C>0$ and $1-M \leq C \k \e$. This proves that \eqref{majjv-1} holds.
\end{proof}

The previous proposition shows that for $\k \sqrt{\e}$ small enough, a minimizer $(v_\e,\v_\e)$ of $G_{\e,\d,\k}$ in $\I$ is not only in $\I$ but also in $H^1((0,1)) \times H^1((0,1))$ and then we can prove that it is smooth everywhere in $[0,1]$. From now on we will always assume that $\k \sqrt{\e}$ goes to 0.

\begin{proposition}\label{convv2}
Let $(v_\e, \v_\e)$ be a minimizer of $G_{\e,\d,\k}$ in $\mathcal{I}$.
\begin{itemize}
\item[1)] If $\k$ is bounded independently of $\e$, and $\d$ is also independent of $\e$ then \eqref{93} and \eqref{15} hold.
\item[2)] If $\k=\frac{\l \dd}{\e}$, with $\l<\frac{1}{\pi}$ independent of $\e$, $\e^2=o_\e(\d)$ and $\d=o_\e(\e)$ then \eqref{94} holds and $\| \v_\e \|_{L^\infty((0,1))}\leq \v_\e(1)(1+o_\ep(1))$.

\end{itemize}
Moreover, in both cases,
\begin{equation}\label{estGG} G_{\e,\d,\k}(v_\e,\v_\e)=G_{\e,\d,\k}(1,\v_\e)(1+o_\e(1))=\inf F_{ \ep/{\sqrt\delta},\k}(1+o_\e(1)) .\end{equation}
\end{proposition}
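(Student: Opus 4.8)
The plan is to combine a global energy comparison, which shows that $\v_\e$ is an almost minimizer of $F_{\b,\k}$ with $\b=\e/\dd$, with a pointwise maximum principle on the first Euler--Lagrange equation, which yields the sharp convergence rate of $v_\e$ to $1$. First I would record the upper bound obtained from the competitor $(1,\v_\b)$, where $\v_\b$ minimizes $F_{\b,\k}$ in $\J$: since $G_{\e,\d,\k}(1,\v)=F_{\b,\k}(\v)$, this gives $G_{\e,\d,\k}(v_\e,\v_\e)\le \inf_{\J}F_{\b,\k}$. Using Proposition \ref{convv1}, so that $v_\e^2$ and $v_\e^4$ equal $1+o_\e(1)$ uniformly, I would then rewrite $G_{\e,\d,\k}(v_\e,\v_\e)$ in the completed-square form \eqref{ecriturepositive} and isolate the nonnegative quantity $W:=\frac12\int_0^1 v_\e'^2+\frac1{4\e^2}\int_0^1(1-v_\e^2)^2$, writing $G_{\e,\d,\k}(v_\e,\v_\e)=W+F_{\b,\k}(\v_\e)+R$, where $R$ collects the weights $v_\e^2-1$ and $v_\e^4-1$ and satisfies $|R|\le \|v_\e^2-1\|_{L^\infty}\,A$ with $A:=\frac18\int_0^1(\v_\e'-2\k)^2+\frac{\d}{8\e^2}\int_0^1\sin^2\v_\e\ge 0$. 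Since $F_{\b,\k}(\v_\e)\ge \inf_{\J}F_{\b,\k}$ and $A=F_{\b,\k}(\v_\e)+\k^2/2$, combining these facts with the upper bound forces $W\le o_\e(1)\,A$ and $A=(1+o_\e(1))(\inf_{\J}F_{\b,\k}+\k^2/2)$; in particular $\v_\e$ is an almost minimizer of $F_{\b,\k}$.

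The heart of the proof, and the step I expect to be the main obstacle, is to upgrade the crude estimate $\|1-v_\e\|_{L^\infty}\le C\k\sqrt\e$ of Proposition \ref{convv1} to \eqref{93} and \eqref{94}. Feeding $W\le o_\e(1)A$ into the coarea/Modica--Mortola inequality of Proposition \ref{convv1} only returns $\|1-v_\e\|_{L^\infty}=O(\e)$ (case a) resp. $O(\d/\e)$ (case b), which is not sharp, so a different mechanism is needed. I would instead apply the maximum principle to the first line of \eqref{EulerLagrange}: at an interior extremum $x_0$ of $v_\e$, dividing by $v_\e(x_0)>0$ gives $\frac1{\e^2}|1-v_\e^2|(x_0)\le |\lambda|+\frac14\v_\e'(x_0)^2+\frac{\d}{2\e^2}\sin^2\v_\e(x_0)+\k|\v_\e'(x_0)|$, hence $\|1-v_\e^2\|_{L^\infty}\le \e^2\,\Theta$ with $\Theta$ the supremum of the right-hand side. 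The real work is then to bound $\Theta$: the Lagrange multiplier $\lambda$ through the equation tested against $v_\e$ (using $\int_0^1(v_\e^2-1)=0$ and the energy bounds of the first paragraph), and $\|\v_\e'\|_{L^\infty}$ and $\|\sin\v_\e\|_{L^\infty}$ through a first integral of the $\v_\e$-equation together with $v_\e\approx 1$. Carrying this out yields $\Theta=O(1)$ in case a, whence $\|1-v_\e\|_{L^\infty}=O(\e^2)=o_\e(\e)$, and $\Theta=O(\d/\e^2)$ in case b, whence $\|1-v_\e\|_{L^\infty}=O(\d)=o_\e(\d^{1/4})$ (using $\d=o_\e(\e)$ and $\e^2=o_\e(\d)$).

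Once the sharp rate is established, the error $\|v_\e^2-1\|_{L^\infty}$ is small enough that $|W+R|\le \|v_\e^2-1\|_{L^\infty}\,A=o_\e(|\inf_{\J}F_{\b,\k}|)$; here one uses $A\approx \k^2/2$ together with the expansions of $\inf_{\J}F_{\b,\k}$ from Theorem \ref{caskinf1/pi}, the required smallness reducing in case b to $\d^{3/2}=o_\e(\e)$, which follows from $\d=o_\e(\e)$. The sandwich $\inf_{\J}F_{\b,\k}\ge G_{\e,\d,\k}(v_\e,\v_\e)=F_{\b,\k}(\v_\e)+W+R\ge \inf_{\J}F_{\b,\k}+(W+R)$ then closes to \eqref{estGG}, namely $G_{\e,\d,\k}(v_\e,\v_\e)=F_{\b,\k}(\v_\e)(1+o_\e(1))=\inf_{\J}F_{\b,\k}\,(1+o_\e(1))$, and along the way $F_{\b,\k}(\v_\e)=\inf_{\J}F_{\b,\k}(1+o_\e(1))$.

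It remains to pin down $\v_\e(1)$ and the $L^\infty$ bounds on $\v_\e$. Since $\v_\e$ is now an almost minimizer of $F_{\b,\k}$, I would transfer the one-dimensional analysis of Section \ref{1} essentially verbatim: the Modica--Mortola lower bound gives $F_{\b,\k}(\v_\e)\ge \frac1\b f(\v_\e(1))$ with $f$ as in \eqref{deff}, and combining this with $F_{\b,\k}(\v_\e)=\inf_{\J}F_{\b,\k}(1+o_\e(1))$ and the strictness of the minimum of $f$ studied in Proposition \ref{valuephi1} forces $\v_\e(1)=\arcsin(2\l)(1+o_\e(1))$ in case b, resp. $\v_\e(1)=2\k\e/\dd\,(1+o_\e(1))$ in case a. The $L^\infty$ bounds $\|\v_\e\|_{L^\infty}\le \v_\e(1)(1+o_\e(1))$ (case b) and $\|\v_\e\|_{L^\infty}\le C\k\e/\dd$ (case a) follow from the monotonicity and blow-up behaviour of $\v_\e$ near the endpoint $x=1$, exactly as in Propositions \ref{valuephi1} and \ref{blowblowup}.
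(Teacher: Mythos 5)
Your scaffolding (the decomposition $G_{\e,\d,\k}(v_\e,\v_\e)=W+F_{\b,\k}(\v_\e)+R$ with $|R|\le \|v_\e^2-1\|_{L^\infty}A$, then a sharp $L^\infty$ bound on $v_\e$, then the sandwich giving \eqref{estGG}) is sound in its endgame, but the pivotal step --- the maximum-principle bound $\|1-v_\e^2\|_{L^\infty}\le \e^2\Theta$ with $\Theta=O(1)$ in case a --- is circular as organised. To get $\Theta=O(1)$ you need $\frac{\d}{\e^2}\|\sin\v_\e\|_{L^\infty}^2=O(1)$ and $\|\v_\e'\|_{L^\infty}^2=O(1)$, i.e.\ essentially $\|\v_\e\|_{L^\infty}\le C\k\e/\dd$ (the second half of \eqref{15}) plus a matching gradient bound; but you only produce these in your \emph{last} paragraph, after the maximum principle has already been applied. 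With what is actually available at that stage (Proposition \ref{convv1} and the energy bound $A=O(\k^2)$), the best pointwise control is of the type
\begin{equation*}
\|\sin^2\v_\e\|_{L^\infty}\le 2\int_0^1|\v_\e'|\,|\sin\v_\e|\le 2\Bigl(\int_0^1\v_\e'^2\Bigr)^{1/2}\Bigl(\int_0^1\sin^2\v_\e\Bigr)^{1/2}=O(\e),
\end{equation*}
which gives only $\Theta=O(1/\e)$, hence $\|1-v_\e\|_{L^\infty}=O(\e)$ --- short of the little-o required by \eqref{93}; and iterating the maximum principle cannot improve this, because the bottleneck term $\frac{\d}{\e^2}\sin^2\v_\e$ does not involve $v_\e$ at all. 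Your final step is also flawed on its own terms: minimizers $\v_\e$ of the \emph{coupled} problem are not known to be monotone (the monotonicity in Proposition \ref{Propinitiale} comes from the first integral \eqref{integrale1eree} of the decoupled problem, which has no analogue here), so Propositions \ref{valuephi1} and \ref{blowblowup} do not transfer ``verbatim''; the paper instead bounds $\v_\e(x)$ pointwise by running the Modica--Mortola/coarea estimate on $[0,x]$ (the step following \eqref{eqmm}).

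You also dismissed the purely variational route too quickly, and that is exactly where the paper gets the sharp rate. The paper does not stop at $W\le o_\e(1)A$: it keeps $W$ inside the chain of inequalities leading to the Modica--Mortola lower bound for the $\v$-part, arriving at \eqref{estGv}, namely $W+\frac{\dd}{\e}f_\e(\v_\e(1))\le -\frac{\k^2\e}{2\dd}(1+o_\e(1))$ with $f_\e(x)=f(x)+\frac{1-\cos x}{4}O_\e(\sqrt\e)$. Since $\min f_\e=-\frac{\k^2\e^2}{2\d}(1+o_\e(1))$, the leading terms on the two sides cancel \emph{exactly}, leaving $W\le o_\e(\e)$ in case a (and $W\le o_\e(\dd/\e)$ in case b, from \eqref{supest2}); re-running the coarea argument of Proposition \ref{convv1} with this improved bound gives $(1-\inf v_\e)^2\le C\e\,W$, which is precisely \eqref{93} and \eqref{94}. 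So the sharpness comes from the cancellation against $\min f$ --- i.e.\ from the Modica--Mortola structure of the $\v$-energy --- and not from the crude estimate $|R|\le\|v_\e^2-1\|_{L^\infty}A$; your assessment that this route ``only returns $O(\e)$'' rests on the weaker bound $W\le o_\e(1)A$. To salvage your plan you would first have to establish $N=0$, the value of $\v_\e(1)$, and $\|\v_\e\|_{L^\infty}\le C\k\e/\dd$ by these energy arguments (which require only the crude bound on $v_\e$), at which point the maximum-principle step becomes redundant.
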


\begin{proof}
 Let $(v,\v)$ be a minimizer of $G_{\e,\d,\k}$ in $\mathcal{I}$ and let $\tilde{\v}_\e$ be a minimizer of $F_{\b,\k}$ in $\mathcal{J}$, with $\beta=\ep/\sqrt\delta$. We have
 \begin{equation} \label{upperbG}G_{\e,\d,\k}(v,\v)\leq G_{\e,\d,\k}(1,\tilde{\v}_\e)=F_{\b,\k}(\tilde{\v}_\e)\leq F_{\b,\k}(\v). \end{equation}

 \item[1)] If $\k$ is bounded we use \eqref{DL22} to get
 \begin{equation}\label{9}
 \frac{1}{8}\int_0^1v^2\v'^2+\frac{\d}{8\e^2}\int_0^1v^4\sin^2\v-\frac{\k}{2}\int_0^1v^2\v'\leq \frac{-\k^2 \e}{2\sqrt{\d}}(1+o_\e(1)).
 \end{equation}
We write
\begin{equation}\label{99}
\frac{\k}{2}\int_0^1v^2\v'=\frac{\k}{2}\int_0^1\v'+\frac{\k}{2}\int_0^1(v^2-1)\v'.
\end{equation}
Besides
\begin{equation}\label{eqestv}
 \left| \frac{\k}{2}\int_0^1 (v^2-1)\v' \right|
  \leq  \frac{\k}{4}\left[ \e^{2\a} \int_0^1 \v'^2+\frac{1}{\e^{2\a}}\int_0^1(v^2-1)^2 \right]
\end{equation}
 for some $\a>0$ to be chosen later. We recall from \eqref{1eremajorationsurv} that $\int_0^1(v^2-1)^2\leq 2\k^2\e^2$. From \eqref{9},  \eqref{99} and \eqref{eqestv}, we obtain
 \begin{eqnarray}
  \frac{1}{8}\int_0^1v^2\v'^2+\frac{\d}{8\e^2}\int_0^1v^4\sin^2\v -\frac{\k}{2}\v(1) \phantom{aaaaaaaaaaa} \nonumber \\
  \phantom{aaaaaaaaaaaaaa}-\frac{\k}{4} \left( \e^{2\a}\int_0^1 \v'^2+2\k^2\e^{2(1-\a)} \right)\leq \frac{-\k^2 \e}{\dd}(1+o_\e(1)).
 \end{eqnarray}
  We recall from Proposition \ref{convv1} that $\|v^2-1\|_{L^\infty((0,1))}\leq C\k \sqrt{\e}$. We chose $\a>0$ such that $\e^{2(1-\a)}=o_\e(\e)$. For instance, we can take $\a=\frac{1}{4}$ and we obtain

 \begin{eqnarray}
 \left( \frac{1}{8} \int_0^1\v'^2+\frac{\d}{8\e^2}\int_0^1\sin^2\v \right)(1+O_\e(\sqrt{\e}))-\frac{\k}{2}\v(1) &\leq& \frac{-\k^2 \e}{2\sqrt{\d}}(1+o_\e(1)) \nonumber \\
 \frac{\dd}{4\e}\int_0^1|\v'||\sin \v| (1+O_\e(\sqrt{\e})) -\frac{\k}{2}\v(1) &\leq& \frac{-\k^2 \e}{2\sqrt{\d}}(1+o_\e(1)). \label{eqmm}
 \end{eqnarray} This implies that $\v(1)>0$.
  We let $N:=E \left(\frac{\v(1)}{\pi} \right)$ and we deduce
 \begin{equation}\nonumber
 \frac{\dd}{\e}\left(\frac{N}{2} +\frac{1-\cos(\v(1)-N\pi)}{4}\right)(1+O_\e(\sqrt{\e}))-\frac{\k}{2}\v(1) \leq \frac{-\k^2 \e}{2\dd}(1+o_\e(1)).
\end{equation}
 We can rewrite this last inequality as
 \begin{eqnarray}
 \frac{N\dd}{2\e}(1-\frac{\k\e\pi}{\dd})+\frac{\dd}{\e}f(\v(1)-N\pi) \phantom{aaaaaaaaaaaaaaaaaaaaaaaaaa}\nonumber \\
 \phantom{aaaa} +\frac{\dd}{\e}\left( \frac{N}{2}+\frac{1-\cos(\v(1)-N\pi)}{4} \right)\times O_\e(\sqrt{\e})\leq \frac{-\k^2 \e}{2\dd}(1+o_\e(1)) \label{eqN}
 \end{eqnarray}
 with $f$ defined by \eqref{deff}. From the study of the function $f$ done in Proposition \ref{DLenergie1} we have $f(\v(1)-N\pi) \geq \frac{-\k^2 \e^2}{2\d}(1+o_\e(1))$ thus
 \begin{eqnarray}
 \frac{N\dd}{2\e}(1-\frac{\k\e\pi}{\dd})+\frac{\dd}{\e}\left( \frac{N}{2}+\frac{1-\cos\left(\v(1)-N\pi\right)}{4} \right)\times O_\e(\sqrt{\e}) \leq o_\e(\e). \nonumber
 \end{eqnarray}
 This implies that $N=0$  for $\e$ small enough. We now come back to inequality \eqref{9} and we recall that we only used the fact that $\int_0^1\frac{v'^2}{2}+\frac{1}{4\e^2}(1-v^2)^2 \geq 0$. Keeping track of this term in the computations leading to \eqref{eqN}, we obtain
 \begin{equation}\label{estGv}
 \int_0^1\frac{v'^2}{2}+\frac{1}{4\e^2}(1-v^2)^2+ \frac{\dd}{\e}f(\v(1))(1+O_\e(\sqrt{\e}) \leq  \frac{-\k^2 \e}{2\dd}(1+o_\e(1)).
 \end{equation} The study of the function $f_\e(x):=f(x)+\frac{1-\cos(x)}{4}O_\e(\sqrt{\e})$ on $[0,\pi]$ shows that  $f_\e(x)\geq \frac{-\k^2\e^2}{\d}(1+o_\e(1))$ for all $x$ in $[0,\pi]$.

 Therefore, \eqref{estGv} yields
 \begin{equation}\label{ameliorationsurv}
 \int_0^1\frac{v'^2}{2}+\frac{1}{4\e^2}(1-v^2)^2 \leq o_\e(\e).
 \end{equation}
 This is an improvement of \eqref{1eremajorationsurv}. We can then apply the method of the proof of Proposition \ref{convv1} to deduce from \eqref{ameliorationsurv} the $L^\infty$ estimate \eqref{93}.

 From \eqref{estGv} and the lower bound  $f(\v(1)) \geq \frac{-\k^2 \e^2}{2\d}(1+o_\e(1))$, we find $\frac{-\k^2 \e^2}{2\d}(1+o_\e(1))\leq f(\v(1)) \leq \frac{-\k^2 \e^2}{2\d}(1+o_\e(1))$. This implies, as in the proof of Proposition \ref{valuephi1}, that the first part of \eqref{15} holds. Moreover, going back to \eqref{eqmm}, and using the estimate for $\v (1)$, we see that the estimate holds when the integral is taken from 0 to $x$. So the same reasoning as before yields
 $$\frac {1-\cos |\v (x)|}4\leq \frac{\k^2\ep^2}{2\delta}(1+o_\ep(1)),$$ which implies the second part of \eqref{15}.

 This computation also yields that $G_{\e,\d,\k}(v,\v) \geq \frac{\dd}{\e}f(\v(1))(1+o_\ep(1))$, which, together with the upper bound \eqref{upperbG} and the estimate on $f(\v(1))$ yields \eqref{estGG}.

 \item[2)] The proof follows the same scheme as the preceding proof. This time we deduce from \eqref{upperbG} and \eqref{DL11} that
 \begin{equation}\label{supest}
\frac{1}{8}\int_0^1v^2\v'^2+\frac{\d}{8\e^2}\int_0^1v^4\sin^2\v-\frac{\k}{2}\int_0^1v^2\v'\leq  G_{\e,\d,\k}(v,\v) \leq 
f(\arcsin(2\l))
\frac{\dd}{\e}(1+o_\e(1)).
 \end{equation}
Inequality \eqref{1eremajorationsurv} yields that $\int_0^1(v^2-1)^2 \leq C\k^2 \e^2\leq C\l^2 \d$. Thus we can write
 \begin{equation}\label{ubvb}
\left| \frac{\k}{2}\int_0^1(v^2-1)\v' \right|\leq \frac{\k}{4}\left[\e^{2\a}\int_0^1\v'^2+C\d \e^{-2\a} \right]
 \end{equation}
with $\a$ to be chosen later. We also have from Proposition \ref{convv1} that \[\|v^2-1\|_{L^\infty((0,1))}\leq C\k \sqrt{\e}\leq C \l \sqrt{\frac{\d}{\e}} \text{ and } \|v^4-1\|_{L^\infty((0,1))}\leq  C \l \sqrt{\frac{\d}{\e}}. \]
We want to take $\a$ such that $\k \e^{2\a}=o_\e(\sqrt{\frac{\d}{\e}})$ and $\k \d \e^{-2\a}=o_\e(\frac{\dd}{\e})$. Since $\k=\frac{\l \dd}{\e}$, this leads to $ \e^{-1/2+2\a}=o_\e(1)$ and $\d \e^{-2\a}=o_\e(1) $. We use that $\d=o_\e(\e)$ and we see that the conditions are satisfied if  $\frac 1 4 <\a<\frac{1}{2}$. For such an $\a$ we obtain
\begin{eqnarray}\label{100}
\left( \frac{1}{8}\int_0^1 \v'^2+\frac{\d}{8\e^2}\int_0^1 \sin^2\v\right) (1+ O_\e(\sqrt{\frac{\d}{\e}}))
-\frac{\l \dd}{2\e}\v(1)  \nonumber \\
\phantom{aaaaa } \leq \left( \frac{1-\sqrt{1-4\l^2}}{4}-\frac{\l}{2}\arcsin(2\l) \right)\frac{\dd}{\e}(1+o_\e(1)).
\end{eqnarray} This implies that $\v(1)>0$.
 We let $N:=E\left(\frac{\v(1)}{\pi} \right)$ and we apply the Modica-Mortola technique to get
\begin{equation}
\frac{N}{2}\left(1-\l \pi \right)(1+O_\e(\sqrt{\frac{\d}{\e}}))+ f(\v(1)-N\pi)
\leq f( \arcsin(2\l) ) (1+o_\e(1)). \label{estcasb}
\end{equation}
with $f$ defined by \eqref{deff}. The study of the function $f$ shows that $f(\v(1)-N\pi) \geq f( \arcsin(2\l) )$ and thus
\begin{equation}\nonumber
\frac{N}{2}(1-\l \pi)(1+O_\e(\sqrt{\frac{\d}{\e}})) \leq o_\e(1).
\end{equation} Since $1-\tilde \kappa \pi >0$,
 this yields $N=0$ for $\e$ small enough. Now we use  the same previous inequalities but keeping track of the term $\int_0^1 \frac{v'^2}{8}+\frac{1}{4\e^2}(1-v^2)^2$ and we obtain
\begin{equation}
\int_0^1 \frac{v'^2}{8}+\frac{1}{4\e^2}(1-v^2)^2+ \frac{\dd}{\e}f(\v(1)) (1+O_\e(\sqrt{\frac{\d}{\e}}))
 \leq \frac{\dd}{\e} f( \arcsin(2\l) )(1+o_\e(1)). \label{supest2}
\end{equation}
But since $f(\v(1)) \geq f( \arcsin(2\l) )$, we obtain
\begin{equation}\nonumber
\int_0^1\frac{v'^2}{2}+\frac{1}{4\e^2}(1-v^2)^2 \leq o_\e({\frac{\dd}{\e}}).
\end{equation}
Now we can apply the method of the proof of Proposition \ref{convv1} to deduce the $L^\infty$ bound for $v$ and thus the first part of \eqref{94}. Since we have also found that $f(\v(1))=f( \arcsin(2\l) )(1+o_\e(1))$  this implies, like in Proposition \ref{valuephi1}, that $\v(1)=\arcsin(2\l)(1+o_\e(1))$.

We now come back to \eqref{100} and use the estimate for $\v(1)$ to find \begin{equation} \label{estphi}
  \frac{1}{8}\int_0^1 \v'^2+\frac{\d}{8\e^2}\int_0^1 \sin^2\v\leq \frac{\sqrt\delta}\ep\frac{(1-\sqrt{1-4\l^2})}{4} (1+o_\ep(1)).\end{equation} This upper bound also holds for the integral between 0 and any $x$ in $(0,1)$. Using the coaera formula, we thus find $$\frac{1-\cos |\v (x)|}4\leq \frac{1-\sqrt{1-4\l^2}}{4}(1+o_\e(1))$$ and this provides the required upper bound for $\v(x)$.

We recall that $G_{\e,\d,\k}(1,\v)=f(\v(1))\sqrt\delta/\ep(1+o_\e(1))$. Going back to \eqref{supest} and keeping track of the computations leading to \eqref{supest2}, we find $$G_{\e,\d,\k}(v,\v)=f(\v(1))\frac {\sqrt\delta}\ep(1+o_\e(1))$$ which is \eqref{estGG}.

\end{proof}

\begin{proposition}\label{utile}  If $\k=\frac{\l \dd}{\e}$, with $\l>\frac{1}{\pi}$ independent of $\e$, $\e^2=o_\e(\d)$ and $\d=o_\e(\e)$ then for a minimizer $(v_\e,\v_\e)$ of $G_{\e,\d,\k}$, we have
\begin{equation}\label{estGGG} G_{\e,\d,\k}(v_\e,\v_\e)=G_{\e,\d,\k}(1,\v_\e)(1+o_\e(1))
=\inf F_{\ep/\sqrt\delta,\k}(1+o_\e(1)).\end{equation}
If we assume that $\d=O_\e(\e^{3/2})$, then
\begin{equation}\label{eq:utile}
\int_0^1v'(x)^2\ dx +\frac{1}{4\e^2}\int_0^1(v^2(x)-1)^2\ dx=O_\e\left( \frac{\sqrt{\d}}{\e}\right).
\end{equation}
\end{proposition}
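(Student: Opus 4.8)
The plan is to deduce both assertions from an exact algebraic identity relating $G_{\e,\d,\k}$ to the auxiliary energy $F_{\b,\k}$ with $\b=\e/\dd$. Writing $v_\e^2\v_\e'=\v_\e'+(v_\e^2-1)\v_\e'$, $v_\e^4\sin^2\v_\e=\sin^2\v_\e+(v_\e^4-1)\sin^2\v_\e$ and using $\d/\e^2=1/\b^2$, I would record the identity
\[ G_{\e,\d,\k}(v_\e,\v_\e)=F_{\b,\k}(\v_\e)+A+E_1+E_2+E_3, \]
where $A:=\frac12\int_0^1 v_\e'^2+\frac1{4\e^2}\int_0^1(1-v_\e^2)^2\geq0$, $E_1:=\frac18\int_0^1(v_\e^2-1)\v_\e'^2$, $E_2:=\frac1{8\b^2}\int_0^1(v_\e^4-1)\sin^2\v_\e$ and $E_3:=-\frac\k2\int_0^1(v_\e^2-1)\v_\e'$. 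Since $\v_\e\in\mathcal{J}$ one has $F_{\b,\k}(\v_\e)=G_{\e,\d,\k}(1,\v_\e)\geq\inf F_{\b,\k}$, while the competitor $(1,\tilde{\v}_\e)$ with $\tilde{\v}_\e$ a minimizer of $F_{\b,\k}$ gives $G_{\e,\d,\k}(v_\e,\v_\e)\leq G_{\e,\d,\k}(1,\tilde{\v}_\e)=\inf F_{\b,\k}$. Everything then reduces to showing that the three error terms $E_1,E_2,E_3$ are small relative to $\inf F_{\b,\k}=-{\tilde{\a}}_0^2/(8\b^2)(1+o_\e(1))$, which is of order $1/\b^2$ by \eqref{DLenergie2}.

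The crucial preliminary estimate is the a priori bound $\int_0^1\v_\e'^2=O_\e(1/\b^2)$. Writing $G$ in the nonnegative form \eqref{ecriturepositive} and inserting the upper bound, together with $\k^2/2=\l^2/(2\b^2)=4\l^2/(8\b^2)$ and the strict inequality ${\tilde{\a}}_0<2\l$ coming from \eqref{defalpha0}, I obtain
\[ \frac18\int_0^1 v_\e^2(\v_\e'-2\k)^2\leq\frac{4\l^2-{\tilde{\a}}_0^2}{8\b^2}+o_\e(1/\b^2)=O_\e(1/\b^2). \]
Because $\k\sqrt\e=\l\sqrt{\d/\e}\to0$, Proposition \ref{convv1} yields $v_\e\to1$ uniformly, so $v_\e^2\geq\frac12$ for $\e$ small; hence $\int(\v_\e'-2\k)^2=O_\e(1/\b^2)$ and $\int\v_\e'^2=O_\e(1/\b^2)$. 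Combining this with $|v_\e^2-1|,|v_\e^4-1|\leq C\sqrt{\d/\e}$ (Proposition \ref{convv1}) and $\int(v_\e^2-1)^2\leq2\k^2\e^2$ from \eqref{1eremajorationsurv}, I would estimate $|E_1|,|E_2|=O_\e(\d^{3/2}\e^{-5/2})$ directly and, by Cauchy--Schwarz, $|E_3|\leq\frac\k2\big(\int(v_\e^2-1)^2\big)^{1/2}\big(\int\v_\e'^2\big)^{1/2}=O_\e(\d^{3/2}\e^{-2})$.

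For the first assertion I note that, since $\d=o_\e(\e)$, all three errors are $o_\e(1/\b^2)$: indeed $\d^{3/2}\e^{-5/2}/(\d\e^{-2})=(\d/\e)^{1/2}\to0$ and $\d^{3/2}\e^{-2}/(\d\e^{-2})=\d^{1/2}\to0$. Using $A\geq0$ and $F_{\b,\k}(\v_\e)\geq\inf F_{\b,\k}$ in the identity then gives $G_{\e,\d,\k}(v_\e,\v_\e)\geq\inf F_{\b,\k}+o_\e(1/\b^2)=\inf F_{\b,\k}(1+o_\e(1))$, which together with the upper bound forces $G_{\e,\d,\k}(v_\e,\v_\e)=\inf F_{\b,\k}(1+o_\e(1))$; rearranging and using $A\geq0$ gives $\inf F_{\b,\k}\leq F_{\b,\k}(\v_\e)=G_{\e,\d,\k}(1,\v_\e)\leq G_{\e,\d,\k}(v_\e,\v_\e)+o_\e(1/\b^2)$, so that \eqref{estGGG} holds. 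For the second assertion, combining the upper bound with $F_{\b,\k}(\v_\e)\geq\inf F_{\b,\k}$ yields $G_{\e,\d,\k}(v_\e,\v_\e)-F_{\b,\k}(\v_\e)\leq0$, so the identity forces $A\leq|E_1|+|E_2|+|E_3|$. Under the sharper hypothesis $\d=O_\e(\e^{3/2})$ the ratio of $\d^{3/2}\e^{-5/2}$ to $\dd/\e$ equals $\d/\e^{3/2}=O_\e(1)$, hence $|E_1|,|E_2|=O_\e(\dd/\e)$, while $|E_3|=O_\e(\d^{3/2}\e^{-2})=o_\e(\dd/\e)$; therefore $A=O_\e(\dd/\e)$, which is \eqref{eq:utile}.

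The main obstacle, compared with the regimes of Proposition \ref{convv2}, is that here both $G_{\e,\d,\k}(v_\e,\v_\e)$ and $\int_0^1\v_\e'^2$ are of the larger order $1/\b^2$, so the cross term $E_3$ is genuinely large and cannot be dismissed naively. What saves the argument is the strict gap ${\tilde{\a}}_0<2\l$, which keeps the a priori $H^1$ bound at precisely $O_\e(1/\b^2)$ with a controlled constant, combined with the segregation estimate $\int(v_\e^2-1)^2\leq2\k^2\e^2$ and the scaling hypotheses $\d=o_\e(\e)$ (for the first assertion) and $\d=O_\e(\e^{3/2})$ (for the sharp estimate \eqref{eq:utile}), which make all the error terms subordinate.
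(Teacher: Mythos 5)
Your proof is correct and follows essentially the same route as the paper: compare $G_{\e,\d,\k}(v_\e,\v_\e)$ with $G_{\e,\d,\k}(1,\v_\e)$ via the competitor $(1,\tilde{\v}_\e)$, use the uniform bound $\|v_\e-1\|_{L^\infty}=O_\e(\sqrt{\d/\e})$ of Proposition \ref{convv1} together with the expansion \eqref{DLenergie2} to show all error terms are subordinate, and keep track of the Modica--Mortola term to get \eqref{eq:utile}, with the hypothesis $\d=O_\e(\e^{3/2})$ entering for exactly the same reason as in the paper (the error of size $\d^{3/2}\e^{-5/2}$ must be dominated by $\sqrt{\d}/\e$). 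The only difference is bookkeeping: the paper first completes the square as in \eqref{ecriturepositive} and uses the multiplicative estimate $G_{\e,\d,\k}(v,\v)+\frac{\l^2\d}{2\e^2}\geq\left(G_{\e,\d,\k}(1,\v)+\frac{\l^2\d}{2\e^2}\right)\left(1+O_\e\left(\sqrt{\d/\e}\right)\right)$, which dispenses with your separate a priori bound $\int_0^1\v_\e'^2=O_\e(1/\b^2)$ and the Cauchy--Schwarz step for the cross term $E_3$, whereas your exact additive decomposition handles these explicitly.
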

\begin{proof} Let $(v,\v$) be a minimizer of $G_{\e,\d,\k}$. Then \eqref{upperbG} holds.  Moreover,
\begin{equation}\label{estG3}
G_{\e,\d,\k}(v,\v) \geq \frac{1}{8} \int_0^1 v^2(\v'-2\k)^2+\frac{\d}{8\e^2}\int_0^1 v^4\sin^2 \v -\frac{\k^2}{2}.
\end{equation}
We use $\|v-1\|_{L^\infty((0,1))} \leq C \sqrt{\frac{\d}{\e}}$ (cf.\ Proposition \ref{convv1}) and proceed as in the previous proofs, approximating $v$ by 1 in \eqref{estG3}, to obtain
\[G_{\e,\d,\k}(v,\v)+\frac{\l^2 \d}{2\e^2}\geq\left(G_{\e,\d,\k}(1,\v)+\frac{\l^2 \d}{2\e^2}\right)\left(1+O_\e\left(\sqrt{\frac{\d}{\e}}\right)\right).\]

The upper bound \eqref{DLenergie2} shows that $\frac{\l^2 \d}{2\e^2}$ is of the same magnitude as the energy. Therefore,
\begin{equation}\label{fullenergycask>1/pi}
G_{\e,\d,\k}(v,\v)=G_{\e,\d,\k}(1,\v)\left(1+O_\e\left(\sqrt{\frac{\d}{\e}}\right)\right).
\end{equation}
From \eqref{DLenergie2}, we have the bounds $G_{\e,\d,\k}(v,\v) \leq \frac{-{\tilde{\a}}_0^2\d}{8\e^2}+O_\e\left(\frac{\sqrt{\d}}{\e}\right)$ and $G_{\e,\d,\k}(1,\v)\geq \frac{-{\tilde{\a}}_0^2 \d}{8\e^2}+O_\e\left(\frac{\sqrt{\d}}{\e}\right)$. This and \eqref{fullenergycask>1/pi} yield \eqref{estGGG}.

From \eqref{fullenergycask>1/pi}, we also deduce
\begin{equation}
\frac 12 \int_0^1 v'^2+\frac{1}{4\e^2}\int_0^1 (v^2-1)^2=O_\e\left(\frac{\sqrt{\d}}{\e}\right)+O_\e\left(\sqrt{\frac{\d}{\e}}\times \frac{\d}{\e^2} \right).
\end{equation}
Now using $\d=O_\e(\e^{3/2})$ we find \eqref{eq:utile}.\end{proof}

In the previous Proposition, the estimate $\delta =o(\ep)$ is not enough to have a sufficient bound for our later purposes on the energy for $v$. This is why we have added an extra hypothesis.

\subsection{Convergence of minimizers}
The study of the minimization of $F_{\e/\sqrt\d,\k}$ led to three different cases, for which we will prove convergence of $\v_\e$.
\begin{proposition}\label{approximationparF}
Let $\k>0$ and $\d>0$ be fixed, let $(v_\e,\v_\e)$ be a minimizer of $G_{\e,\d,\k}$ in $\I$.
If we set $ \Phi_\e(x):=\frac{\dd}{2\k \e} \v_\e(1-\frac{\e x}{\dd})$ then $\Phi_\e \rightarrow \Phi_0(x):=e^{-x}$ in $\mathcal{C}^1_{\text{loc}}(\R^+)$.
\end{proposition}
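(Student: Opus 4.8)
The plan is to transpose to the full functional the blow-up analysis already carried out for the auxiliary problem in the regime $\k\b=o_\b(1)$, using that here $v_\e$ is very close to $1$. Set $\b=\e/\dd$; since $\k$ and $\d$ are fixed and $\e\to0$, we have $\b\to0$ and $\k\b\to0$, so part $1)$ of Proposition \ref{convv2} applies: $\|1-v_\e\|_{L^\infty((0,1))}=o_\e(\e)$, $\v_\e(1)=2\k\b(1+o_\e(1))$, and by \eqref{estGG} together with \eqref{DL22}, $F_{\b,\k}(\v_\e)=G_{\e,\d,\k}(1,\v_\e)=\frac{-\k^2\b}{2}(1+o_\e(1))$. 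Since $\frac{\dd}{2\k\e}=\frac1{2\k\b}$ and $\frac{\e x}{\dd}=\b x$, the function in the statement is $\Phi_\e(x)=\frac{\v_\e(1-\b x)}{2\k\b}$, exactly the analogue of $\Phi_\b$ from the auxiliary problem, now defined on $[0,1/\b)$, a domain exhausting $\R^+$. Writing $w(x):=v_\e(1-\b x)$, the estimate on $v_\e$ gives $\|w-1\|_{L^\infty}\to0$, and $w\ge 1/2$ for $\e$ small.

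First I would establish an $H^1_{\text{loc}}$ bound on $\Phi_\e$. From $F_{\b,\k}(\v_\e)=\frac18\int_0^1\v_\e'^2+\frac1{8\b^2}\int_0^1\sin^2\v_\e-\frac\k2\v_\e(1)$ and the two estimates above, one gets $\frac18\int_0^1\v_\e'^2+\frac1{8\b^2}\int_0^1\sin^2\v_\e=\frac{\k^2\b}2(1+o_\e(1))$, so that $\int_0^1\v_\e'^2=O(\k^2\b)$. Since $\Phi_\e'(x)=\frac{-\v_\e'(1-\b x)}{2\k}$, the change of variable $y=1-\b x$ yields $\int_0^{1/\b}\Phi_\e'^2=\frac1{4\k^2\b}\int_0^1\v_\e'^2=O(1)$. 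Combined with $\Phi_\e(0)=\frac{\v_\e(1)}{2\k\b}\to1$, this shows $\Phi_\e$ is bounded in $H^1_{\text{loc}}(\R^+)$; hence, up to a subsequence, $\Phi_\e\rightharpoonup\Phi_0$ in $H^1_{\text{loc}}(\R^+)$ and $\Phi_\e\to\Phi_0$ in $\mathcal C^0_{\text{loc}}([0,+\infty))$.

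The key step, which simultaneously produces the $\mathcal C^1$ convergence, is to use the \emph{first integral} of the second Euler--Lagrange equation rather than its divergence form. Integrating the second equation of \eqref{EulerLagrange} from $s$ to $1$ and using $\v_\e'(1)=2\k$, the terms at $1$ cancel and one finds $v_\e(s)^2(\v_\e'(s)-2\k)=-\frac{\d}{\e^2}\int_s^1 v_\e^4\cos\v_\e\sin\v_\e\,dt$. Writing this at $s=1-\b x$, using $\frac\d{\e^2}=\frac1{\b^2}$, rescaling the integral and dividing by $2\k\b$ gives
\begin{equation}\label{eq:firstintplan}
w(x)^2\bigl(\Phi_\e'(x)+1\bigr)=\int_0^x w(\xi)^4\cos\bigl(2\k\b\Phi_\e(\xi)\bigr)\frac{\sin\bigl(2\k\b\Phi_\e(\xi)\bigr)}{2\k\b}\,d\xi .
\end{equation}
I would then pass to the limit in \eqref{eq:firstintplan}: on every compact set $w\to1$ uniformly, $2\k\b\Phi_\e\to0$ uniformly (as $\Phi_\e$ is uniformly bounded and $\k\b\to0$), and $\frac{\sin(2\k\b\Phi_\e)}{2\k\b}=\Phi_\e\frac{\sin(2\k\b\Phi_\e)}{2\k\b\Phi_\e}\to\Phi_0$ uniformly, so the right-hand side converges uniformly on compacts to the $\mathcal C^1$ function $\int_0^x\Phi_0$. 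Since $w^2\to1$ uniformly and is bounded away from $0$, the left-hand side forces $\Phi_\e'\to-1+\int_0^x\Phi_0$ uniformly on compacts, i.e. $\mathcal C^1_{\text{loc}}$ convergence, the limit necessarily being $\Phi_0'$. Thus $\Phi_0$ is smooth and solves $\Phi_0''=\Phi_0$ on $\R^+$ with $\Phi_0(0)=1$ and $\Phi_0'(0)=-1$, whence $\Phi_0(x)=e^{-x}$; as this limit is independent of the subsequence, the whole family converges.

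The main obstacle is upgrading the weak $H^1$/uniform $\mathcal C^0$ convergence to $\mathcal C^1_{\text{loc}}$ convergence. Working with the second-order divergence-form equation is awkward, because it produces the term $(v_\e^2)'=2v_\e v_\e'$, which we only control in $L^2$ and not in $L^\infty$, so $\Phi_\e''$ cannot be bounded uniformly on compacts. Identity \eqref{eq:firstintplan} circumvents this: it expresses $\Phi_\e'$ through an integral whose integrand converges \emph{uniformly}, directly yielding uniform convergence of $\Phi_\e'$. The remaining points are routine and rest on Proposition \ref{convv2}: the uniform convergence $v_\e\to1$ on the whole blow-up interval (from $\|1-v_\e\|_{L^\infty}=o_\e(\e)$) and the negligibility of the $v_\e$ contribution in the energy bound for $\int_0^1\v_\e'^2$.
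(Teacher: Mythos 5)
Your proof is correct, and its overall skeleton coincides with the paper's: the inputs from Proposition \ref{convv2} together with \eqref{estGG} and \eqref{DL22}, the $H^1_{\text{loc}}$ bound on $\Phi_\e$ extracted from the energy expansion, weak $H^1_{\text{loc}}$/uniform $\mathcal{C}^0_{\text{loc}}$ compactness, identification of the limit through the second Euler--Lagrange equation, and uniqueness of the limit to pass from subsequential to full convergence. Where you genuinely depart from the paper is the step that upgrades $\mathcal{C}^0$ to $\mathcal{C}^1_{\text{loc}}$ convergence. The paper rescales the divergence-form system (its \eqref{45}), passes to the limit in the sense of distributions to get $\Phi_0''=\Phi_0$, and then asserts that $\Phi_\e''$ is bounded in $L^2_{\text{loc}}$ in order to recover $\Phi_0'(0)=-1$; you instead integrate the second equation of \eqref{EulerLagrange} over $(1-\b x,1)$, let the Neumann condition $\v_\e'(1)=2\k$ cancel the boundary terms, and obtain the pointwise identity $w_\e(x)^2\bigl(\Phi_\e'(x)+1\bigr)=\int_0^x w_\e^4\cos(2\k\b\Phi_\e)\,\frac{\sin(2\k\b\Phi_\e)}{2\k\b}\,d\xi$ (with $w_\e(x)=v_\e(1-\b x)$), whose right-hand side converges uniformly on compact sets. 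This buys something real: in the paper's route, expanding $(w_\e^2\Phi_\e')'$ produces the term $2w_\e w_\e'\Phi_\e'$, and since $w_\e'$ and $\Phi_\e'$ are each controlled only in $L^2_{\text{loc}}$, that product is a priori merely $L^1_{\text{loc}}$, so the stated $L^2_{\text{loc}}$ bound on $\Phi_\e''$ requires a further argument (for instance, that $\|w_\e'\|_{L^2(0,1/\b)}=O(\sqrt{\b})$, so this term vanishes in $L^1_{\text{loc}}$, followed by an Arzel\`a--Ascoli-type compactness step); your first integral involves only quantities converging uniformly, so the $\mathcal{C}^1_{\text{loc}}$ convergence, the limiting ODE, and both data $\Phi_0(0)=1$, $\Phi_0'(0)=-1$ come out in one stroke, making your argument, if anything, more robust than the printed one. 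Two minor checks in your favour: your constant $\frac{1}{4\k^2\b}$ in the computation of $\int_0^{1/\b}\Phi_\e'^2$ is the correct one (the paper's $\frac{1}{2\k^2\b}$ is a harmless slip), and identifying the uniform limit of $\Phi_\e'$ with $\Phi_0'$ is legitimate because $\Phi_\e'\rightharpoonup\Phi_0'$ in $L^2_{\text{loc}}$.
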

\begin{proof}
We recall from Proposition \ref{convv2} that, in this case we have $\v_\e(1)=\frac{2\k \e}{\sqrt{\d}}(1+o_\e(1))$. Next from \eqref{estGG}, we have $G_{\e,\d,\k}(1,\v_\e)=\frac{-\k^2 \e}{2\dd}(1+o_\e(1))$. These two facts imply that
\[ \int_0^1\frac{\v_\e'^2}{8}\leq \frac{\k^2 \e}{\dd}(1+o_\e(1)).\]
Now we set $w_\e(x)=v_\e(1-\frac{\e x}{\dd})$ and $\Phi_\e(x):=\frac{\dd}{2\k \e} \v_\e(1-\frac{\e x}{\dd})$. Both functions are defined in $[0,\frac{\dd}{\e}]$. With $\b=\frac{\e}{\dd}$  we find
\begin{equation}
\int_0^{\frac{1}{\b}} \Phi_\e'(x)^2dx
=\frac{1}{2\k^2 \b}\int_0^1 \v_\e'(y)^2\ dy\leq C
\end{equation}
for some constant $C$ independent of $\e, \k, \d$. Since $\Phi_\e(0)=\frac{\dd \v_\e(1)}{2\k \e}$ is bounded with respect to $\e$, we obtain that $\Phi_\e$ is bounded in $H^1_{loc}((0,\frac{1}{\b}))$ and hence there exists $\Phi_0$ in $H^1_{\text{loc}}(\R^+)$ such that, up to a subsequence, $\Phi_\e \rightharpoonup \Phi_0$ in $H^1_{\text{loc}}(\R^+)$ and  $\Phi_\e \rightarrow \Phi_0$ in $\mathcal{C}^0_{\text{loc}}(\R^+)$. Besides $(w_\ep,\Phi_\ep)$ satisfies the following equations
\begin{equation}\label{45}
\left\{
\begin{array}{lcll}
-w''+\frac{1}{\d}w(w^2-1)+\frac{\k^2\e^2}{\d}w \Phi'^2+\frac{1}{2}w^2\sin^2(2\k \b \Phi)+\frac{2\k^2\e^2}{\d} w\Phi'= \frac{\e^2}{\d}\lambda w, \\
-(w^2\Phi')'+w^4\cos (2\k \b \Phi) \frac{\sin(2\k \b \Phi)}{2\k \b}-2\k(w^2)'=0,
\end{array}
\right.
\end{equation}
\begin{equation}
\left\{
\begin{array}{lcll}
w'(0)=w'(\frac{1}{\b})=0, \\
\Phi(0)=\frac{\dd \v_\e(1)}{2\k \e}, \ \ \Phi'(0)=-1.
\end{array}
\right.
\end{equation}
Using that $w_\e \rightarrow 1$ in $\mathcal{C}^0_{\text{loc}}(\R^+)$ and $\Phi_\e \rightharpoonup \Phi_0$ in $H^1_{\text{loc}}(\R^+)$ we can pass to the limit in the sense of distributions in the second equation of \eqref{45} and find that $\Phi_0$ satisfies $\Phi_0''=\Phi_0$. Since the convergence is uniform on $[0,M]$ for every $M>0$ we find that $\Phi_0(0)=\lim_{\e\rightarrow 0} \frac{\dd \v_\e(1)}{2\k \e}=1$. We also have from the second equation of \eqref{45} that $\Phi_\e''$ is bounded in $L^2_{\text{loc}}((0,\frac{1}{\b}))$. Since $\Phi_\e$ is also bounded in $H^1((0,\frac{1}{\b}))$ we find that $\Phi'_\e$ converges in $\mathcal{C}^1_{\text{loc}}([0,+\infty))$ and thus  $\Phi_0'(0)=-1$. This yields $\Phi_0(x)=e^{-x}$. Since the limit is unique the entire sequence $\Phi_\e$ converges.
\end{proof}

\begin{proposition}
Let $\k=\frac{\l \dd}{\e}$, with $\l<\frac{1}{\pi}$ independent of $\e$, $\d=o_\e(\e)$ and $\e^2=o_\e(\d)$. Let $(v_\e,\v_\e)$ be a minimizer of $G_{\e,\d,\k}$ in $\mathcal{I}$. We set $\psi_\e(x)=\v_\e(1-\frac{\e x}{\dd})$ then  $\psi_\e \rightarrow\psi_0$ in $\mathcal{C}^1_{\text{loc}}(\R^+)$ with $\psi_0(x)=2\arctan \left[ \tan \left(\frac{\arcsin(2\l)}{2} \right)e^{-x}\right]$.
\end{proposition}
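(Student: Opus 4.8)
The plan is to follow the blow-up argument of Proposition~\ref{approximationparF}, the two new features being that in the regime $\l<1/\pi$ the rescaled phase converges to the nontrivial profile $\psi_0$ of Proposition~\ref{blowblowup} rather than to a pure exponential, and that the coupling between $v_\e$ and $\v_\e$ in the system \eqref{EulerLagrange} must be neutralized using the strong convergence $v_\e\to 1$. Throughout I write $\b=\e/\sqrt\d$, which tends to $0$ because $\e^2=o_\e(\d)$.

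First I would collect the a priori bounds furnished by Proposition~\ref{convv2} in this case: the uniform convergence $\|1-v_\e\|_{L^\infty((0,1))}\to 0$, the improved energy bound $\int_0^1 v_\e'^2=o_\e(1/\b)$ established there, the estimate \eqref{estphi} which in the variable $\b$ reads $\frac18\int_0^1\v_\e'^2+\frac1{8\b^2}\int_0^1\sin^2\v_\e\le\frac1\b\frac{1-\sqrt{1-4\l^2}}{4}(1+o_\e(1))$, together with $\v_\e(1)=\arcsin(2\l)(1+o_\e(1))$ and $\|\v_\e\|_{L^\infty((0,1))}<\pi/2$ for $\e$ small. Setting $w_\e(x):=v_\e(1-\b x)$ and $\psi_\e(x):=\v_\e(1-\b x)$ on $[0,1/\b]$, the change of variables $y=1-\b x$ turns these into $\int_0^{1/\b}\psi_\e'^2=\b\int_0^1\v_\e'^2\le C$, $\int_0^{1/\b}\sin^2\psi_\e\le C$ and $\int_0^{1/\b}w_\e'^2=\b\int_0^1 v_\e'^2=o_\e(1)$. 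Since $\psi_\e(0)=\v_\e(1)$ stays bounded, $\psi_\e$ is bounded in $H^1_{\text{loc}}(\R^+)$, so up to a subsequence $\psi_\e\rightharpoonup\psi_0$ in $H^1_{\text{loc}}$ and $\psi_\e\to\psi_0$ in $\mathcal{C}^0_{\text{loc}}$, while $w_\e\to 1$ in $\mathcal{C}^0_{\text{loc}}$ and $w_\e'\to 0$ in $L^2_{\text{loc}}$.

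Next I would pass to the limit in the Euler--Lagrange equation. Rewriting the second equation of \eqref{EulerLagrange} in the variable $x$, multiplying by $\b^2$ and using $\k\b=\l$, one gets $-(w_\e^2\psi_\e')'+w_\e^4\cos\psi_\e\sin\psi_\e-2\l(w_\e^2)'=0$ on $(0,1/\b)$. Because $w_\e\to 1$ uniformly and $\psi_\e'\rightharpoonup\psi_0'$ weakly in $L^2_{\text{loc}}$, the product $w_\e^2\psi_\e'$ converges weakly to $\psi_0'$; the potential term converges uniformly to $\cos\psi_0\sin\psi_0$; and $(w_\e^2)'\to 0$ in the sense of distributions since $w_\e^2\to 1$ uniformly. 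This yields $\psi_0''=\sin\psi_0\cos\psi_0$ in $\R^+$. To identify $\psi_0$ I would use lower semicontinuity to get $\int_0^\infty(\psi_0'^2+\sin^2\psi_0)<\infty$, extract $x_n\to\infty$ with $\psi_0'(x_n),\sin\psi_0(x_n)\to 0$, and deduce from the first integral $\psi_0'^2=\sin^2\psi_0+C$ that $C=0$; combined with $0\le\psi_0<\pi/2$, $\psi_0(0)=\arcsin(2\l)$ and monotonicity, this gives $\psi_0'=-\sin\psi_0$, i.e. the profile $\psi_0$ stated in the proposition. This part is exactly the ODE analysis of Proposition~\ref{blowblowup}.

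Finally, to upgrade to $\mathcal{C}^1_{\text{loc}}$, I would read off from the rescaled equation that $(w_\e^2\psi_\e')'=w_\e^4\cos\psi_\e\sin\psi_\e-2\l(w_\e^2)'$ is bounded in $L^2_{\text{loc}}$ (here $w_\e'\to 0$ in $L^2_{\text{loc}}$ controls $(w_\e^2)'$), so that $w_\e^2\psi_\e'$ is bounded in $H^1_{\text{loc}}$ and converges in $\mathcal{C}^0_{\text{loc}}$; dividing by $w_\e^2\to 1$ gives $\psi_\e\to\psi_0$ in $\mathcal{C}^1_{\text{loc}}(\R^+)$, whence $\psi_0'(0)=\lim(-\b\v_\e'(1))=-2\l$ consistently, and uniqueness of the limit removes the subsequence. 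The main obstacle is not the ODE, which is handled as in Proposition~\ref{blowblowup}, but decoupling $v_\e$ from $\v_\e$ when passing to the limit in the coupled system \eqref{EulerLagrange}: this is precisely why the uniform convergence $v_\e\to 1$ and the improved estimate $\int_0^1 v_\e'^2=o_\e(1/\b)$ from Proposition~\ref{convv2} are essential, and why one obtains only $\mathcal{C}^1$, rather than $\mathcal{C}^\infty$, convergence.
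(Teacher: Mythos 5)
Your proof is correct and follows essentially the same route as the paper's: blow-up $\psi_\e(x)=\v_\e(1-\b x)$, $w_\e(x)=v_\e(1-\b x)$, $H^1$ bounds on $\psi_\e$ and smallness of $\int w_\e'^2$ extracted from the energy identities of Proposition \ref{convv2}, passage to the limit in the rescaled second Euler--Lagrange equation using the uniform convergence $w_\e\to 1$, a $\mathcal{C}^1_{\text{loc}}$ upgrade, and uniqueness of the limit to dispense with the subsequence. The one step you handle differently is the identification of $\psi_0$: the paper first establishes the $\mathcal{C}^1_{\text{loc}}$ convergence, reads off the Cauchy data $\psi_0(0)=\arcsin(2\l)$ and $\psi_0'(0)=-2\l$ (misprinted as $+2\l$ in the paper), and concludes by ODE uniqueness, whereas you identify $\psi_0$ \emph{before} the $\mathcal{C}^1$ step via the finite-energy/first-integral argument of Proposition \ref{blowblowup}. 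In your ordering the appeal to \emph{monotonicity} is not yet justified --- at that stage you know neither $\psi_0'(0)$ nor that $\psi_0$ is decreasing --- but the gap closes in one line: since $C=0$ and $\psi_0(0)=\arcsin(2\l)>0$, Cauchy--Lipschitz gives $\psi_0>0$ everywhere, hence $\psi_0'=\pm\sin\psi_0$ never vanishes and has a fixed sign, and the sign $+$ would force $\sin^2\psi_0\geq 4\l^2$ on all of $\R^+$, contradicting $\int_0^{\infty}\sin^2\psi_0<\infty$; alternatively, simply reorder and run the $\mathcal{C}^1$ upgrade first, as the paper does. Finally, your $\mathcal{C}^1$ upgrade (bounding $(w_\e^2\psi_\e')'$ in $L^2_{\text{loc}}$ and then dividing by $w_\e^2\to 1$) is in fact tidier than the paper's direct claim that $\psi_\e''$ is bounded in $L^2_{\text{loc}}$, which as stated needs an extra bootstrap to handle the term $w_\e'\psi_\e'$, a priori only in $L^1_{\text{loc}}$.
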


\begin{proof}
From Proposition \ref{convv2}, we have that $\v_\e(1)=\arcsin(2\l)(1+o_\e(1))$ and $G_{\e,\d,\k}(1,\v_\e)= \frac{\dd}{\e}\left( \frac{1-\sqrt{1-4\l^2}}{4}-\frac{\l}{2}\arcsin(2\l)\right) (1+o_\e(1)) $. We thus obtain
\[ \int_0^1 \v_\e'^2 \leq \frac{C \dd}{\e}, \]
for some $C>0$. Therefore,
\begin{eqnarray}
\int_0^{\frac{1}{\b}}\psi_\e'^2=\frac{\e}{\dd}\int_0^1 \v_\e'^2 \leq C. \nonumber
\end{eqnarray}
This proves that $\psi_\e$ is bounded in $H^1((0,\frac{1}{\b}))$ and thus there exists $\psi_0$ in $H^1_{\text{loc}}(\R^+)$ such that $\psi_\e \rightharpoonup \psi_0$ in $H^1_{\text{loc}}(\R^+)$, up to a subsequence. We have that $w_\e=v_\e(1-\frac{\e x}{\sqrt{\d}})$ and $\psi_\e$ satisfy
\begin{equation}\label{46}
\left\{
\begin{array}{lcll}
-w''+\frac{1}{\d}w(w^2-1)+\frac{1}{4}w\psi'^2+\l w \psi'=\lambda \frac{\e^2}{\d} w \text{ in } (0,\frac{1}{\b}), \\
-(w^2\psi')'+w^4\cos \psi \sin \psi -2\l (w^2)'=0 \text{ in } (0,\frac{1}{\b)}),
\end{array}
\right.
\end{equation}
\begin{equation}
\left\{
\begin{array}{lcll}
w'(0)=w'(1)=0,\\
\psi(0)= \v_\e(1), \ \ \ \psi'(0)=2\l.
\end{array}
\right.
\end{equation}
 We recall from Proposition \ref{convv2} that $w_\e\rightarrow 1$ in $\mathcal{C}^0_{\text{loc}}([0,+\infty))$.
We use that $w_\e \rightarrow 1$ in $\mathcal{C}^0$ and $\psi_\e \rightharpoonup \psi_0$ in $H^1_{\text{loc}}(\R^+)$ to pass to the limit in the sense of distributions in the second equation of \eqref{46} and we obtain
\begin{equation}
\psi_0''=\sin \psi_0 \cos \psi_0 \text{ in } \R^+.
\end{equation}
From the uniform convergence of $\psi_\e$ on every compact of $[0,+\infty)$, we find that $\psi_0(0)=
\lim_{\e\rightarrow 0} \v_\e(1)=\arcsin(2\l)$. Besides, we see from the second equation of \eqref{46}
that $\psi''_\e$ is bounded in $L^2_{\text{loc}}(\R^+)$. Since $\psi_\e$ is also bounded in
$H^1_{\text{loc}}(\R^+)$ (in particular $\psi_\e'$ is bounded in $L^2_{\text{loc}}(\R^+)$) we have that
$\psi_\e \rightarrow \psi_0$ in $\mathcal{C}^1_{\text{loc}}([0,+\infty))$. This means that $
\psi'_0(0)=2\l$. The unique solution with the desired boundary conditions is $\psi_0(x)= 2\arctan \left(\tan [ \frac{\arcsin(2\l)}{2}e^{-x}]\right) $. Since the limit is unique, the entire sequence $(\psi_\e)_\e$ converges.
\end{proof}

In the next case, we will show that $\v_\e$ goes beyond $N\pi$ with $N$ large, of order $\sqrt \d/\e$.
\begin{proposition}
Let $\k=\frac{\l \dd}{\e}$ with $\l>\frac{1}{\pi}$ independent of $\e$, $\e^2=o_\e(\d)$ and $\d=o_\e(\e)$. Let $(v_\e,\v_\e)$ be a minimizer of $G_{\e,\d,\k}$ in $\mathcal{I}$. We set $\psi_\e(x):=\frac{\e}{\dd}\v_\e(x)$, then there exists $\psi_0$ in $H^1((0,1))$ such that $\psi_\e \rightharpoonup \psi_0$ in $H^1((0,1))$ and $\psi_\e \rightarrow \psi_0$ in $\mathcal{C}^0([0,1])$ (up to a subsequence). Besides there exists $l>0$ such that $\lim_{\e\rightarrow 0} \frac{\e}{\dd}\v_\e(1)=l$.
\end{proposition}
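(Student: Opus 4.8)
The plan is to reproduce the argument of Proposition~\ref{evaluationdephi1}, starting now from the energy information for the full functional collected in Proposition~\ref{utile}. By definition $F_{\b,\k}(\v)=G_{\e,\d,\k}(1,\v)$ with $\b=\e/\dd$, so Proposition~\ref{utile} gives both the expansion $G_{\e,\d,\k}(v_\e,\v_\e)=-\frac{{\tilde{\a}}_0^2\d}{8\e^2}(1+o_\e(1))$ and the identity \eqref{estGGG}, that is $F_{\b,\k}(\v_\e)=G_{\e,\d,\k}(1,\v_\e)=-\frac{{\tilde{\a}}_0^2}{8\b^2}(1+o_\e(1))$.

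For the $H^1$ bound on $\psi_\e$, I would first complete the square in the crossed term; using $\int_0^1 v_\e^2=1$ this rewrites the energy as
\[ G_{\e,\d,\k}(v_\e,\v_\e)=\frac12\int_0^1 v_\e'^2+\frac{1}{4\e^2}\int_0^1(1-v_\e^2)^2+\frac18\int_0^1 v_\e^2(\v_\e'-2\k)^2+\frac{\d}{8\e^2}\int_0^1 v_\e^4\sin^2\v_\e-\frac{\k^2}{2}. \]
All integrals on the right are nonnegative, so $\frac18\int_0^1 v_\e^2(\v_\e'-2\k)^2\le G_{\e,\d,\k}(v_\e,\v_\e)+\frac{\k^2}{2}$; with $\k^2/2=\l^2\d/(2\e^2)$ the expansion above shows the right-hand side equals $\frac{\d}{\e^2}\big(\frac{\l^2}{2}-\frac{{\tilde{\a}}_0^2}{8}(1+o_\e(1))\big)=O_\e(\d/\e^2)$. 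By Proposition~\ref{convv1} together with $\d=o_\e(\e)$ we have $v_\e\to1$ uniformly, hence $v_\e^2\ge\frac12$ for $\e$ small and $\int_0^1(\v_\e'-2\k)^2=O_\e(\d/\e^2)$. Since $\psi_\e'=\frac{\e}{\dd}\v_\e'$ and $\frac{\e}{\dd}\cdot2\k=2\l$, this rescales to $\int_0^1(\psi_\e'-2\l)^2=\frac{\e^2}{\d}\int_0^1(\v_\e'-2\k)^2=O_\e(1)$. As $\psi_\e(0)=0$, the family $\psi_\e$ is bounded in $H^1((0,1))$, and the compact embedding $H^1((0,1))\hookrightarrow\mathcal{C}^0([0,1])$ yields a subsequence with $\psi_\e\rightharpoonup\psi_0$ in $H^1((0,1))$ and $\psi_\e\to\psi_0$ in $\mathcal{C}^0([0,1])$.

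For the positivity of $l$, the decisive point is the sign of the boundary term. Discarding the two manifestly nonnegative terms in $F_{\b,\k}(\v_\e)=\frac18\int_0^1\big(\v_\e'^2+\frac{1}{\b^2}\sin^2\v_\e\big)-\frac{\k}{2}\v_\e(1)$ gives $F_{\b,\k}(\v_\e)\ge-\frac{\k}{2}\v_\e(1)=-\frac{\l}{2\b}\v_\e(1)$. Combined with $F_{\b,\k}(\v_\e)=-\frac{{\tilde{\a}}_0^2}{8\b^2}(1+o_\e(1))$ this forces $\v_\e(1)\ge\frac{{\tilde{\a}}_0^2}{4\l\b}(1+o_\e(1))$, i.e.\ $\psi_\e(1)=\b\v_\e(1)\ge\frac{{\tilde{\a}}_0^2}{4\l}(1+o_\e(1))$. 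Passing to the limit along the subsequence and using the uniform convergence gives $l:=\psi_0(1)=\lim_{\e\to0}\frac{\e}{\dd}\v_\e(1)\ge\frac{{\tilde{\a}}_0^2}{4\l}>0$.

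The argument is essentially a transcription of Proposition~\ref{evaluationdephi1}, so I do not expect a genuine obstacle beyond the bookkeeping. The only mildly delicate point is the $H^1$ bound, where one must check that the bracket $\frac{\l^2}{2}-\frac{{\tilde{\a}}_0^2}{8}$ remains positive; this follows from the strict inequality ${\tilde{\a}}_0<2\l$ in \eqref{defalpha0}. Conceptually the whole mechanism is that an energy of order $-1/\b^2$ can only be produced by the boundary term $-\frac{\k}{2}\v_\e(1)$, and it is precisely this that pins $\v_\e(1)$ at order $1/\b$ and makes $l$ strictly positive.
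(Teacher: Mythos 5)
Your proof is correct, and it diverges from the paper's in an instructive way on the second half. For the $H^1((0,1))$ bound you use the same mechanism as the paper (completing the square in the spin-orbit term and invoking the energy expansion that follows from \eqref{estGGG} and \eqref{DLenergie2}), the only difference being that you work with the full energy $G_{\e,\d,\k}(v_\e,\v_\e)$ via \eqref{ecriturepositive} and then remove the weight $v_\e^2$ using Proposition \ref{convv1}, whereas the paper applies the identity directly to the reduced energy $G_{\e,\d,\k}(1,\v_\e)=F_{\b,\k}(\v_\e)$ and so never needs the lower bound $v_\e^2\geq \frac12$; both are sound, and your remark that the bracket $\frac{\l^2}{2}-\frac{{\tilde{\a}}_0^2}{8}$ is nonnegative (from ${\tilde{\a}}_0\leq 2\l$) is the right consistency check, though for the upper bound itself only the order $O_\e(\d/\e^2)$ matters. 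Where you genuinely depart from the paper is the positivity of $l$: the paper sets $N=E(\v_\e(1)/\pi)$ and feeds the energy expansion into the period-counting lower bound \eqref{firstlowerbound}, using $1-\l\pi<0$ to force $N\geq C_1\dd/\e$; you instead discard the two nonnegative terms of $F_{\b,\k}(\v_\e)$ to get $F_{\b,\k}(\v_\e)\geq -\frac{\l}{2\b}\v_\e(1)$ and conclude directly that $\b\v_\e(1)\geq \frac{{\tilde{\a}}_0^2}{4\l}(1+o_\e(1))$. Your route is shorter, avoids the function $f$ and the quasi-periodic bookkeeping entirely, and has the small bonus of an explicit quantitative lower bound $l\geq \frac{{\tilde{\a}}_0^2}{4\l}>0$; the paper's route, on the other hand, produces the estimate $N\geq C_1\dd/\e$ on the number of periods, which is the form of the information reused in the subsequent blow-up analysis (Proposition \ref{evaluationdephi1} and the $C^1_{\text{loc}}$ convergence result). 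Both arguments rest on the same underlying principle you identify: an energy of order $-1/\b^2$ can only come from the boundary term, which pins $\v_\e(1)$ at order $1/\b$.
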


\begin{proof}
We have $\psi_\e'(x)=\frac{\e}{\dd} \v_\e'(x)$ for $x$ in $[0,1]$. From  Proposition \ref{utile}, we also have $G_{\e,\d,\k}(1,\v_\e)=\frac{-{\tilde{\a}}_0^2\d}{{8}\e^2}(1+o_\e(1))$ with ${\tilde{\a}}_0$ defined by \eqref{defalpha0}. This means
\begin{equation}\nonumber
\int_0^1\frac{\v_\e'^2}{8}+\frac{\d}{8\e^2}\sin^2 \v_\e -\frac{\l \dd}{\e}\v_\e(1)=\frac{-{\tilde{\a}}_0^2\d}{{8}\e^2}(1+o_\e(1)).
\end{equation}
We use that
\[\int_0^1\frac{\v_\e'^2}{8}+\frac{\d}{8\e^2}\sin^2 \v_\e -\frac{\l \dd}{\e}\v_\e(1)= \int_0^1\frac{(\v_\e'-2\k)^2}{8}+\frac{\d}{8\e^2}\sin^2 \v_\e-\frac{\l^2 \d}{\e^2} \]
and $\k=\frac{\l \dd}{\e}$ to obtain that there exists $C>0$ independent of $\e$ such that
\begin{equation}\nonumber
\int_0^1(\psi_\e'-2\l)^2 \leq C.
\end{equation}
Since $\psi_\e(0)=0$ we have $\psi_\e$ bounded in $H^1((0,1))$. We deduce that there exists $\psi_0$ in $H^1((0,1))$ such that, up to a subsequence we have $\psi_\e \rightharpoonup \psi_0$ in $H^1((0,1))$ and $\psi_\e \rightarrow \psi_0$ in $\mathcal{C}^0([0,1])$. In particular there exists $l$ in $\R$ such that $\lim_{\e\rightarrow 0} \frac{\e}{\dd}\v_\e(1)=l$. We now show that $l>0$. We call $N=E\left(\frac{\v_\e(1)}{\pi}\right)$, by using \eqref{firstlowerbound} and the fact that $G_{\e,\d,\k}(1,\v_\e)=\frac{-{\tilde{\a}}_0^2\d}{\e^2}(1+o_\e(1))$ we find that $N\geq \frac{C_1 \dd}{\e}$ for some $C_1>0$ and hence $\lim_{\e\rightarrow 0} \frac{\e}{\dd}\v_\e(1)>0$.
\end{proof}

\begin{proposition}\label{bounddH1}
Let $\k=\frac{\l \dd}{\e}$, with $\l>\frac{1}{\pi}$ independent of $\e$, $\e^2=o_\e(\d)$ and $\d=O_\e(\e^{3/2})$. Let $(v_\e,\v_\e)$ be a minimizer of $G_{\e,\d,\k}$ in $\mathcal{J}$, we set $\tilde{\v}_\e(x):=\v_\e(\frac{\e x}{\dd})$. Then $\tilde{\v}_\e \rightarrow \v_0$ in $C^1_{\text{loc}}(\R^+)$ where $\v_0$ is the solution of \eqref{eqlimitk>1/pi}.
\end{proposition}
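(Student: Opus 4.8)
The plan is to follow the proof of the analogous statement for the auxiliary functional $F_{\b,\k}$ (the last Proposition proved in Section \ref{1} for $\l>\frac1\pi$), replacing the exact first-order ODE \eqref{integrale1eree} and the quasi-periodicity enjoyed by an $F_{\b,\k}$-minimizer---which $\v_\e$ does \emph{not} satisfy---by the uniform convergence $v_\e\to1$ (Propositions \ref{convv1} and \ref{convv2}) and the energy estimates of Proposition \ref{utile}. Set $\b=\e/\dd$, $w_\e(x)=v_\e(\b x)$, and recall $\tilde{\v}_\e(x)=\v_\e(\b x)$, both defined on $[0,1/\b]$. A change of variables in the second equation of \eqref{EulerLagrange} turns it into
\[
-(w_\e^2\tilde{\v}_\e')'+w_\e^4\cos\tilde{\v}_\e\sin\tilde{\v}_\e+2\l(w_\e^2)'=0 \quad\text{in } (0,1/\b),
\]
together with $\tilde{\v}_\e(0)=0$ and $\tilde{\v}_\e'(0)=\b\v_\e'(0)$. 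The hypothesis $\d=O_\e(\e^{3/2})$ enters through Proposition \ref{utile}, which gives $F_{\b,\k}(\v_\e)=G_{\e,\d,\k}(1,\v_\e)=-\frac{{\tilde{\a}}_0^2}{8\b^2}+O_\e(\frac1\b)$ and, via \eqref{eq:utile}, $\int_0^1 v_\e'^2+\frac1{4\e^2}(v_\e^2-1)^2=O_\e(\frac1\b)$.

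First I would establish a uniform local $H^1$ bound on $\tilde{\v}_\e$. Applying the Modica--Mortola trick with the constant ${\tilde{\a}}_0$ of \eqref{defalpha0} and writing $\v_\e(1)=N\pi+r$ with $N=E(\v_\e(1)/\pi)$, $r\in[0,\pi)$, the exact identity
\[
F_{\b,\k}(\v_\e)+\frac{{\tilde{\a}}_0^2}{8\b^2}=\frac18\int_0^1\left(|\v_\e'|-\frac{\sqrt{\sin^2\v_\e+{\tilde{\a}}_0^2}}{\b}\right)^2dx+\frac1{4\b}\int_0^1|\v_\e'|\sqrt{\sin^2\v_\e+{\tilde{\a}}_0^2}\,dx-\frac{\l}{2\b}\v_\e(1)
\]
combined with $\int_0^1|\v_\e'|\sqrt{\sin^2\v_\e+{\tilde{\a}}_0^2}\,dx\ge\int_0^{\v_\e(1)}\sqrt{\sin^2 y+{\tilde{\a}}_0^2}\,dy=2N\l\pi+O_\e(1)$ makes the $N$-dependent bulk contribution cancel exactly (this is precisely where \eqref{defalpha0} is used). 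Since the left-hand side is $O_\e(\frac1\b)$, the Modica--Mortola defect is $O_\e(\frac1\b)$, and after rescaling this reads
\[
\int_0^{1/\b}\left(|\tilde{\v}_\e'|-\sqrt{\sin^2\tilde{\v}_\e+{\tilde{\a}}_0^2}\right)^2dx=O_\e(1).
\]
As $\sqrt{\sin^2\tilde{\v}_\e+{\tilde{\a}}_0^2}\le\sqrt{1+{\tilde{\a}}_0^2}$, this controls $\int_0^M\tilde{\v}_\e'^2$ for every $M$; with $\tilde{\v}_\e(0)=0$ it bounds $\tilde{\v}_\e$ in $H^1((0,M))$, so up to a subsequence $\tilde{\v}_\e\rightharpoonup\v_0$ in $H^1_{\text{loc}}(\R^+)$ and $\tilde{\v}_\e\to\v_0$ in $\mathcal C^0_{\text{loc}}(\R^+)$ with $\v_0(0)=0$.

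Next, using $w_\e\to1$ uniformly, $(w_\e^2)'\to0$ in $\mathcal D'$, and the uniform convergence of $\tilde{\v}_\e$, I would pass to the limit in the blow-up equation above to obtain $\v_0''=\sin\v_0\cos\v_0$ in $\R^+$; elliptic regularity makes $\v_0$ smooth, and a bootstrap using the equation together with the local $L^2$-control of $(w_\e^2)'$ furnished by \eqref{eq:utile} upgrades the convergence to $\mathcal C^1_{\text{loc}}(\R^+)$. Multiplying the limit equation by $\v_0'$ yields the first integral $\v_0'(x)^2=\sin^2\v_0(x)+c^2$ with $c=\v_0'(0)=\lim_\e\b\v_\e'(0)$.

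The main obstacle is then to identify the slope $c$. From the $\mathcal C^1_{\text{loc}}$ convergence and the uniform defect bound, $\int_0^M(|\v_0'|-\sqrt{\sin^2\v_0+{\tilde{\a}}_0^2})^2dx\le O_\e(1)$ for all $M$, whence, inserting the first integral, $\int_0^{\infty}(\sqrt{\sin^2\v_0+c^2}-\sqrt{\sin^2\v_0+{\tilde{\a}}_0^2})^2dx<\infty$. The integrand equals $(c^2-{\tilde{\a}}_0^2)^2\big/(\sqrt{\sin^2\v_0+c^2}+\sqrt{\sin^2\v_0+{\tilde{\a}}_0^2})^2$, which is bounded below by the positive constant $(c^2-{\tilde{\a}}_0^2)^2\big/(\sqrt{1+c^2}+\sqrt{1+{\tilde{\a}}_0^2})^2$ as soon as $c^2\ne{\tilde{\a}}_0^2$; the integral over $[0,+\infty)$ would then diverge, a contradiction, so $c^2={\tilde{\a}}_0^2$. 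It remains to fix the sign: since $\frac\e\dd\v_\e(1)\to l>0$ by the previous Proposition, the limiting profile must be increasing, forcing $c={\tilde{\a}}_0>0$, so that $\v_0$ solves \eqref{eqlimitk>1/pi}; uniqueness of that solution makes the whole sequence converge. I expect the genuinely delicate points to be exactly this slope identification---in particular pinning down the correct sign of $c$ from the global behaviour of $\v_\e$---and the $\mathcal C^1_{\text{loc}}$ upgrade, which is where the stronger hypothesis $\d=O_\e(\e^{3/2})$ (rather than merely $\d=o_\e(\e)$) is needed.
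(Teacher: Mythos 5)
Your strategy departs from the paper's at the decisive step, and for the most part it works. The paper does not calibrate the energy: it proves the local almost-minimality inequality \eqref{keyestimateR}, $\int_0^R g(w_\e,\tilde{\v}_\e)\le \int_0^R g(1,\Phi_\e)+C$ with $C$ independent of $R$ and $\e$, by a surgery argument (gluing $(w_\e,\tilde{\v}_\e)$, shifted by an integer multiple of $\pi$, to the blow-up $\Phi_\e$ of an $F_{\b,\k}$-minimizer across a layer $(R,R+x_0)$ chosen so that $(w_\e(R+x_0)^2-1)^2\le C\d$), and then identifies the slope by letting $R\to\infty$ and comparing energies per period, $h(\omega_0)\le h({\tilde{\a}}_0)$ with $h$ as in \eqref{defh2}, concluding by uniqueness of the minimizer of $h$. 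Your calibration of $F_{\b,\k}$ by $\sqrt{\sin^2\cdot+{\tilde{\a}}_0^2}$, with the exact cancellation of the $N$-dependent terms provided by \eqref{defalpha0}, is a legitimate and more direct alternative: the rescaled defect bound $\int_0^{1/\b}\bigl(|\tilde{\v}_\e'|-\sqrt{\sin^2\tilde{\v}_\e+{\tilde{\a}}_0^2}\bigr)^2dx=O_\e(1)$ passes to the limit by $\mathcal{C}^1_{\text{loc}}$ convergence and forces $c^2={\tilde{\a}}_0^2$, with no surgery and no quasi-periodicity of the limit profiles needed. One repair of wording: the statement of Proposition \ref{utile}, i.e.\ \eqref{estGGG}, carries only multiplicative errors $(1+o_\e(1))$ and hence gives only $F_{\b,\k}(\v_\e)+\frac{{\tilde{\a}}_0^2}{8\b^2}=o_\e(\b^{-2})$, which is useless here; the additive form $O_\e(1/\b)$ that your defect estimate requires follows from \eqref{fullenergycask>1/pi} in its proof combined with \eqref{DLenergie2} and $\d=O_\e(\e^{3/2})$ (which makes $\frac{\d}{\e^2}\sqrt{\d/\e}=O_\e(\frac{\dd}{\e})$). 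This is a citation issue, not a substantive one.

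There is, however, a genuine gap: your determination of the sign of $c$ does not follow from what you wrote. The convergence $\tilde{\v}_\e\to\v_0$ holds only on compact sets $[0,M]$, whereas $\v_\e(1)=\tilde{\v}_\e(1/\b)$ is the value at the far end of an expanding interval; the fact that $\frac{\e}{\dd}\v_\e(1)\to l>0$ puts no constraint on the monotonicity of $\tilde{\v}_\e$ on a fixed $[0,M]$. A priori $\tilde{\v}_\e$ could decrease on $[0,M]$ and climb back up afterwards -- a turning point costs only an $O(1)$ amount of defect, which your $O_\e(1)$ defect bound allows -- so the alternative $c=-{\tilde{\a}}_0$ is not excluded by the endpoint information alone ($\v_\e$, unlike an $F_{\b,\k}$-minimizer, is not known to be monotone, since \eqref{integrale1eree} is unavailable). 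The gap can be closed with the tools you already set up: if $a_\e:=-\min_{[0,1]}\v_\e\ge 0$, the coarea formula gives
\begin{equation*}
\int_0^1|\v_\e'|\sqrt{\sin^2\v_\e+{\tilde{\a}}_0^2}\,dx\;\ge\;2{\tilde{\a}}_0\,a_\e+\int_0^{\v_\e(1)}\sqrt{\sin^2y+{\tilde{\a}}_0^2}\,dy,
\end{equation*}
so inserting this into your calibration identity and using the $O_\e(1/\b)$ bound on $F_{\b,\k}(\v_\e)+\frac{{\tilde{\a}}_0^2}{8\b^2}$ yields $\frac{{\tilde{\a}}_0 a_\e}{2\b}\le \frac{C}{\b}$, i.e.\ $\v_\e\ge -C$ on $[0,1]$ uniformly in $\e$. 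Hence $\v_0\ge -C$ on $[0,+\infty)$, which contradicts the decreasing alternative $\v_0(s)\le -{\tilde{\a}}_0 s$. With this step inserted, $c={\tilde{\a}}_0$, $\v_0$ solves \eqref{eqlimitk>1/pi}, and the rest of your argument (uniqueness of the limit, convergence of the whole sequence) stands.
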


\begin{proof}
We set $w_\e(x):= v_\e(\frac{\e x}{\dd})$. Let $\psi_\e$ be a minimizer of $F_{\b,\k}$ in $\J$ where $\b=\e/\sqrt \d$. We  also set $\Phi_\e(x):=\psi_\e(\b x)$. For any large $R>0$, we want to prove that $(w_\e,\tilde{\v}_\e)$ is bounded in $H^1((0,R))$. In order to do that, we want to compare the energy of $(w_\e,\tilde{\v}_\e)$ with the energy of $(1, \Phi_\e)$ in $(0,R)$. In particular, we let :
\begin{eqnarray}\nonumber
g(w,\psi)&:=&\frac{w'^2}{2}+\frac{(1-w^2)^2}{4\d}+ \frac{1}{8}\left(w^2 \psi'^2+ w^4 \sin^2 \psi \right)-\frac{\l}{2}w^2\psi'\\
&=& \frac{w'^2}{2}+\frac{(1-w^2)^2}{4\d}+ \frac{1}{8}\left[w^2(\psi'-2\l)^2+w^4\sin^2\psi \right]-\frac{\l^2}{2}w^2 \nonumber
\end{eqnarray}
for $(w,\psi)$ in $H^1((0,\frac{1}{\b}))\times H^1((0,\frac{1}{\b}))$ and we want to show that there exists $C>0$ independent of $R$ such that
\begin{equation}\label{keyestimateR}
\int_0^R g(w_\e,\tilde{\v}_\e) \leq \int_0^R g(1,\Phi_\e) +C.
\end{equation}
By using Proposition \ref{utile} with the rescaled function $w_\e(x)=v_\e(\b x)$ we find
\begin{equation}
\frac{1}{2}\int_0^{\frac{1}{\b}}w_\e'^2+\frac{1}{4\d}\int_0^{\frac{1}{\b}} (w_\e^2-1)^2 =O_\e(1).
\end{equation}
Thus we have
\[ \int_{R+1}^{R+2} (w_\e^2-1)^2 \leq \int_0^\frac{1}{\b}(w_\e^2-1)^2 \leq C\delta . \]
Hence, there exists $1<x_0\leq 2$ such that
\begin{equation}\label{choixdex_0}
(w_\e(R+x_0)^2-1)^2 \leq C\delta.
\end{equation}
 As a test function, we take:
\begin{equation}
(u,\theta)=
\begin{cases}
(1, \Phi_\e) \ \text{ in } (0,R), \\
(w_\e,\tilde{\v}_\e-\pi E\left( \frac{\tilde{\v}_\e(R+x_0)-\Phi_\e(R)}{\pi } \right) ) \ \text{ in } (R+x_0, \frac{1}{\b}), \\
(u_1,\theta_1) \ \text{ in } (R,R+x_0),
\end{cases}
\end{equation}
with
\begin{eqnarray}
u_1 &=& \left(1-\frac{x-R}{x_0}\right)+ \frac{x-R}{x_0}w_\e(R+x_0) \nonumber \\
 \theta_1 &=& \left( 1-\frac{x-R}{x_0}\right)\Phi_\e(R)+ \frac{x-R}{x_0}\left( \tilde{\v}_\e(R+x_0)-\pi E\left( \frac{\tilde{\v}_\e(R+x_0)-\Phi_\e(R)}{\pi } \right) \right). \nonumber
\end{eqnarray}
We can then see, by using the minimizing property of $(w_\e,\tilde{\v}_\e)$ that
\begin{eqnarray}
\int_0^{R+x_0}g(w_\e,\tilde{\v}_\e) &\leq & \int_0^R g(1,\Phi_\e) \nonumber \\
& & +\frac{1}{8}\int_R^{R+x_0} u_1^2 \theta_1'^2+u_1^4\sin^2\theta_1-\frac{\l}{2}\int_R^{R+x_0}u_1^2\theta_1' \nonumber \\
& & +\int_R^{R+x_0} \frac{u_1'^2}{2}+\frac{(u_1^2-1)^2}{4\d}, \nonumber
\end{eqnarray}
where we have used that
\begin{equation}\label{reprise}
\int_{R+x_0}^\frac{1}{\b}g(w,\psi+L\pi)=\int_{R+x_0}^\frac{1}{\b}g(w,\psi)
\end{equation}
for every $L$ in $\mathbb{Z}$.
Since we have that
$u'(x)=\frac{w_\e(R+x_0)-1}{x_0}$ in $(R,R+x_0)$ and $(w_\e(R+x_0)^2-1)^2\leq C\d$ (from \eqref{choixdex_0}) we can see that
\[ \int_R^{R+x_0} \frac{u'^2}{2}+\frac{(u^2-1)^2}{4\d} \leq \frac{C \d}{x_0}+C\leq C \]
where $C$ is independent of $R$. We also have that
\[ \theta'(x)=\frac{1}{x_0}\left( \tilde{\v}_\e(R+x_0)-\Phi_\e(R)- \pi E\left( \frac{\tilde{\v}_\e(R+x_0)-\Phi_\e(R)}{\pi } \right)\right). \]
Hence we find, by using that $x_0>1$, that $ |\theta'(x)|\leq \pi$. Since $\|w_\e-1\|_{L^\infty} \leq C \sqrt{\frac{\d}{\e}}$ we get
\begin{equation}\nonumber
\frac{1}{8}\int_R^{R+x_0} u^2 \theta'^2+u^4\sin^2\theta-\frac{\l}{2}\int_R^{R+x_0}u^2\theta' \leq C
\end{equation}
with $C$ independent of $R$. This yields
\begin{equation}\nonumber
\int_0^{R+x_0} g(w_\e,\tilde{\v}_\e) \leq \int_0^R g(1,\Phi_\e) +C.
\end{equation}
By using the expression of $g(w,\psi)$ we can see that
\begin{eqnarray}\nonumber
\int_0^{R} g(w_\e, \tilde{\v}_\e) &\leq& \int_0^{R+x_0} g(w_\e, \tilde{\v}_\e)+\frac{\l^2}{2}\int_R^{R+x_0}w^2 \\
&\leq & \int_0^R g(1,\Phi_\e) +\frac{\l^2}{2}.
\end{eqnarray}
Thus we obtain \eqref{keyestimateR}.
Since we know that $|\int_0^R g(1,\Phi_\e)|$ is bounded uniformly in $\e$, we  deduce that $(w_\e,\tilde{\v}_\e)$ is bounded in $H^1((0,R))$ for every $R>0$. We can thus find $\v_0$ in $H^1_{\text{loc}}(\R^+)$ and extract a subsequence, still denoted by $\e$ such that $\tilde{\v}_\e \rightharpoonup \v_0$ in $H^1_{\text{loc}}(\R^+)$ and $\tilde{\v}_\e \rightarrow \v_0$ in $C^0_{\text{loc}}(\R^+)$. We also have that $(w_\e,\tilde{\v}_\e)$ satisfy
\begin{equation}\label{778}
\left\{
\begin{array}{lcll}
-w''+\frac{1}{\d}w(w^2-1)+\frac{1}{4}w\tilde{\v}'^2-\l w \tilde{\v}'=\lambda \frac{\e^2}{\d} w \text{  in } (0,\frac{1}{\b}), \\
-(w^2\tilde{\v}')'+w^4\cos \tilde{\v} \sin \tilde{\v} +2\l (w^2)'=0 \text{  in } (0,\frac{1}{\b}),
\end{array}
\right.
\end{equation}
\begin{equation}
\left\{
\begin{array}{lcll}
w'(0)=w'(\frac{1}{\b})=0,\\
\tilde{\v}(0)= 0, \ \ \ \tilde{\v}'(\frac{1}{\b})=2\l.
\end{array}
\right.
\end{equation}
By using that $\tilde{\v}_\e \rightarrow \tilde{\v}_0$ in $\mathcal{C}^0_{\text{loc}}([0,+\infty))$, $w_\e \rightarrow 1$ in $\mathcal{C}^0_{\text{loc}}([0,+\infty))$ and $\tilde{\v}_\e {\rightharpoonup} \tilde{\v}_0$ in $H^1_{\text{loc}}(\R^+)$ we can pass to the limit in the sense of distributions in the second equation of \eqref{778} and we find that $\tilde{\v}_0$ satisfies
\begin{equation}\nonumber
\left\{
\begin{array}{lcll}
\tilde{\v}_0''&=&\sin \tilde{\v}_0 \cos \tilde{\v}_0\text{ in } \R^+ ,\\
\tilde{\v}_0(0)&=&0.
\end{array}
\right.
\end{equation}
Furthermore the second equation of \eqref{778} also provides us with $\tilde{\v}_\e''$ bounded in $L^2_{\text{loc}}(\R^+)$ because $w_\e'$ is bounded in $L^2_{\text{loc}}(\R^+)$. Then, since $\tilde{\v}_\e$ is already bounded in $H^1_{\text{loc}}(\R^+)$ we have that $\tilde{\v}_\e \rightarrow \tilde{\v}_0$ in $\mathcal{C}^1_{\text{loc}}([0,+\infty))$. In particular we have that
\begin{equation}\nonumber
\tilde{\v}_0'(0)=\omega_0
\end{equation}
for some $\omega_0 >0$ such that $\lim_{\e\rightarrow 0} \tilde{\v}_\e'(0)=\omega_0$. The rest of the proof is devoted to showing  $\omega_0={\tilde{\a}}_0$, where ${\tilde{\a}}_0$ is defined by \eqref{defalpha0}.

We  pass to the limit $\e \rightarrow 0$ in  inequality \eqref{keyestimateR}, by using the convergence $C^1_{\text{loc}}(\R^+)$ of $ \tilde{\v}_\e$ and $\Phi_\e$ and the weak convergence in $H^1_{\text{loc}}(\R^+)$ of $w_\e$ to obtain
\begin{eqnarray}\label{prepassage}
\frac{1}{R}\int_0^R g(1, \tilde{\v}_0) \leq \frac{1}{R}\int_0^R g(1,\Phi_0) +\frac{C}{R},
\end{eqnarray}
We now use that $ \tilde{\v}_0$ and $\Phi_0$ are periodic in the sense that there exist $T',T$ such that
$ \tilde{\v}_0(x+T')=\pi+\psi_0(x)$ in $[0,\frac{1}{\b}-T']$ and $\Phi_0(x+T)=\pi+\Phi_0(x)$ in $[0,\frac{1}{\b}-T]$ and we take the limit $R$ goes to infinity in \eqref{prepassage}  to obtain
\begin{eqnarray}\label{prelimit}
\frac{1}{T'}\int_0^{T'} g(1, \tilde{\v}_0) \leq \frac{1}{T}\int_0^T g(1,\Phi_0).
\end{eqnarray}
 We claim that $\frac{1}{T'}\int_0^{T'} g(1, \tilde{\v}_0)=h(\omega_0)$ and $\frac{1}{T}\int_0^T g(1,\Phi_0)=h({\tilde{\a}}_0)$ where $h$ is defined by \eqref{defh2}. Let us show the first equality, the second being similar. By definition $T'$ is the first point such that $\tilde{\v}_0(T')=\pi$. By using the equation of $\tilde{\v}_0$, and in particular the fact that $\tilde{\v}_0'^2=\sin^2\tilde{\v}_0+\omega_0^2$, we have that
\begin{equation}
T'=\int_0^\pi \frac{dy}{\sqrt{\sin^2y+\omega_0^2}}.
\end{equation}
Besides we have
\begin{eqnarray}
\int_0^{T'} g(1,\tilde{\v}_0) & = & \frac{1}{8}\int_0^{T'} \tilde{\v}_0'(y)^2+\sin^2 \tilde{\v}_0(y)dy -\frac{\l \pi}{2} \nonumber \\
&= & \frac{1}{8}\int_0^{T'} \tilde{\v}_0'(y)^2+\sin^2 \tilde{\v}_0(y) +\omega_0^2dy -\frac{\l \pi}{2}-\frac{\omega_0^2 T'}{8} \nonumber \\
& =& \frac{1}{4} \int_0^{T'} |\tilde{\v}_0'(y)| \sqrt{\sin^2\tilde{\v}_0(y)+\omega_0^2}dy -\frac{\l \pi}{2}-\frac{\omega_0^2 T'}{8} \nonumber \\
&=& \frac{1}{4}\int_0^\pi \sqrt{\sin^2y+\omega_0^2}-\frac{\l \pi}{2}-\frac{\omega_0^2 T'}{8} \nonumber
\end{eqnarray}
where we have used that  $\tilde{\v}_0'=\sqrt{\sin^2\tilde{\v}_0+\omega_0^2}$. We thus have
\begin{eqnarray}
\frac{1}{T'}\int_0^{T'} g(1,\tilde{\v}_0)&=& \frac{\int_0^\pi \sqrt{\sin^2y+\omega_0^2dy}-2\l \pi}{4\int_0^\pi\frac{dy}{\sqrt{\sin^2y+\omega_0^2}}}-\frac{\omega_0^2}{8} \nonumber \\
&=&\frac{\int_0^{\pi/2} \sqrt{\sin^2y+\omega_0^2dy}-\l \pi}{4\int_0^{\pi/2}\frac{dy}{\sqrt{\sin^2y+\omega_0^2}}}-\frac{\omega_0^2}{8} \nonumber \\
&=&h(\omega_0) \nonumber.
\end{eqnarray}
Therefore \eqref{prelimit} implies
$h(\omega_0) \leq h({\tilde{\a}}_0)$. But ${\tilde{\a}}_0$ is the unique minimizer of $h$, as proved in the proof of Proposition \ref{propcask>1surpi}. Thus we have $\omega_0={\tilde{\a}}_0$ and $\v_0$ is the solution of \eqref{eqlimitk>1/pi}.
\end{proof}
Theorem \ref{minimizerspbcomplet} follows from the Propositions of this section.

\hfill

\noindent
{\bf Acknowledgements:} The first author would like to thank Felix Otto for discussing Modica Mortola techniques. The visit of the second author to  the departement of mathematics of the Universit\'e Versailles-Saint-Quentin  was supported by a public grant as part of the Investissement d'avenir project, reference ANR-11-LABX-0056-LMH, LabEx LMH.  The second author has also been supported by the Millennium Nucleus Center for Analysis of PDE NC130017 of the Chilean Ministry of Economy.

\bibliographystyle{plain}
\bibliography{biblio}

\begin{thebibliography}{10}

\bibitem{AMspin}
A.~Aftalion and P.~Mason.
\newblock Phase diagrams and {T}homas-{F}ermi estimates for spin-orbit-coupled
  {B}ose-{E}instein condensates under rotation.
\newblock {\em Phys. Rev. A}, 88(2):023610, 2013.

\bibitem{ANS}
A.~Aftalion, B.~Noris, and C.~Sourdis.
\newblock Thomas-{F}ermi approximation for coexisting two component
  {B}ose-{E}instein condensates and nonexistence of vortices for small
  rotation.
\newblock {\em Comm. Math. Phys.}, 336(2):509--579, 2015.

\bibitem{AftalionLetelier2015}
A.~Aftalion and J.~Royo-Letelier.
\newblock A minimal interface problem arising from a two component
  {B}ose-{E}instein condensate via {$\Gamma$}-convergence.
\newblock {\em Calc. Var. Partial Differential Equations}, 52(1-2):165--197,
  2015.

\bibitem{AfSo}
A.~Aftalion and C.~Sourdis.
\newblock Interface layer of a two-component {B}ose--{E}instein condensate.
\newblock {\em Com. in Contemp. Mathematics}, page 1650052, 2015.

\bibitem{ABCP}
S.~Alama, L.~Bronsard, A.~Contreras, and D.~E. Pelinovsky.
\newblock Domain walls in the coupled {G}ross-{P}itaevskii equations.
\newblock {\em Arch. Ration. Mech. Anal.}, 215(2):579--610, 2015.

\bibitem{AmbrosioVirga91}
L.~Ambrosio and E.~G. Virga.
\newblock A boundary value problem for nematic liquid crystals with a variable
  degree of orientation.
\newblock {\em Arch. Rational Mech. Anal.}, 114(4):335--347, 1991.

\bibitem{BeLinWeiZhao}
H.~Berestycki, T-C. Lin, J.~Wei, and C.~Zhao.
\newblock On phase-separation models: asymptotics and qualitative properties.
\newblock {\em Arch. Ration. Mech. Anal.}, 208(1):163--200, 2013.

\bibitem{BeTer}
H.~Berestycki, S.~Terracini, K.~Wang, and J.~Wei.
\newblock On entire solutions of an elliptic system modeling phase separations.
\newblock {\em Adv. Math.}, 243:102--126, 2013.

\bibitem{Braides2002}
A.~Braides.
\newblock {\em {$\Gamma$}-convergence for beginners}, volume~22 of {\em Oxford
  Lecture Series in Mathematics and its Applications}.
\newblock Oxford University Press, Oxford, 2002.

\bibitem{CaffLin2}
L.~A. Caffarelli and F-H. Lin.
\newblock Singularly perturbed elliptic systems and multi-valued harmonic
  functions with free boundaries.
\newblock {\em J. Amer. Math. Soc.}, 21(3):847--862, 2008.

\bibitem{dwz}
E.~N. Dancer, K.~Wang, and Z.~Zhang.
\newblock The limit equation for the {G}ross-{P}itaevskii equations and {S}.
  {T}erracini's conjecture.
\newblock {\em J. Funct. Anal.}, 262(3):1087--1131, 2012.

\bibitem{EvansGariepy2015}
L.~C. Evans and R.~F. Gariepy.
\newblock {\em Measure theory and fine properties of functions}.
\newblock Textbooks in Mathematics. CRC Press, Boca Raton, FL, revised edition,
  2015.

\bibitem{Ga}
C.~Gallo.
\newblock The ground state of two coupled {G}ross-{P}itaevskii equations in the
  {T}homas-{F}ermi limit.
\newblock {\em J. Math. Pures Appl. (9)}, 106(1):1--75, 2016.

\bibitem{GM}
M.~Goldman and B.~Merlet.
\newblock Phase segregation for binary mixtures of {B}ose-{E}instein
  {C}ondensates.
\newblock {\em arXiv preprint arXiv:1505.07234}, 2015.

\bibitem{GoldmanLetelier2015}
M.~Goldman and J.~Royo-Letelier.
\newblock Sharp interface limit for two components {B}ose-{E}instein
  condensates.
\newblock {\em ESAIM Control Optim. Calc. Var.}, 21(3):603--624, 2015.

\bibitem{hu2012}
H.~Hu, B.~Ramachandhran, H.~Pu, and X.-J. Liu.
\newblock Spin-orbit coupled weakly interacting {B}ose-{E}instein condensates
  in harmonic traps.
\newblock {\em Phys. Rev. Let.}, 108(1):010402, 2012.

\bibitem{MLS}
G.~I Martone, Y.~Li, and S.~Stringari.
\newblock Approach for making visible and stable stripes in a
  spin-orbit-coupled {B}ose-{E}instein superfluid.
\newblock {\em Phys. Rev. A}, 90(4):041604, 2014.

\bibitem{MArot}
P.~Mason and A.~Aftalion.
\newblock Classification of the ground states and topological defects in a
  rotating two-component {B}ose-{E}instein condensate.
\newblock {\em Phys. Rev. A}, 84(3):033611, 2011.

\bibitem{Modica1987}
L.~Modica.
\newblock The gradient theory of phase transitions and the minimal interface
  criterion.
\newblock {\em Arch. Rational Mech. Anal.}, 98(2):123--142, 1987.

\bibitem{NoTaTeVe}
B.~Noris, H.~Tavares, S.~Terracini, and G.~Verzini.
\newblock Uniform {H}\"older bounds for nonlinear {S}chr\"odinger systems with
  strong competition.
\newblock {\em Comm. Pure Appl. Math.}, 63(3):267--302, 2010.

\bibitem{sinha}
S.~Sinha, R.~Nath, and L.~Santos.
\newblock Trapped two-dimensional condensates with synthetic spin-orbit
  coupling.
\newblock {\em Phys. Rev. Let.}, 107(27):270401, 2011.

\bibitem{Sourdis2016}
C.~Sourdis.
\newblock On the weak separation limit of a two-component {B}ose-{E}instein
  condensate.
\newblock {\em arXiv preprint arXiv:1611.04470}, 2016.

\bibitem{WGJZ}
C.~Wang, C.~Gao, C.-M. Jian, and H.~Zhai.
\newblock Spin-orbit coupled spinor {B}ose-{E}instein condensates.
\newblock {\em Phys. Rev. Let.}, 105(16):160403, 2010.

\bibitem{zhai}
H.~Zhai.
\newblock Spin-orbit coupled quantum gases.
\newblock {\em International Journal of Modern Physics B}, 26(01):1230001,
  2012.

\end{thebibliography}
\end{document}